\renewcommand{\mid}{|}
\newcommand{\mix}{\mathrm{MIX}}
\newcommand{\all}{\mathrm{all}}
\newcommand{\rrvert}{\vert}
\newcommand{\llvert}{\vert}
\renewcommand{\restriction}{\mathord{\upharpoonright}}
\newcommand{\tmix}{T_{\mathrm{MIX}}}
\newcommand{\trel}{T_{\mathrm{REL}}}
\newcommand{\gap}{\mathtt{gap}}
\newtheorem{theorem}{Theorem}[section]
\newtheorem{maintheorem}{Theorem}
\newtheorem{lemma}[theorem]{Lemma}
\newtheorem{proposition}[theorem]{Proposition}
\newtheorem{claim}[theorem]{Claim}
\newcommand{\Dim}{\mathrm{D}}
\newcommand{\betaR}{\beta_{\mathrm{R}}}
\begin{document}
\begin{frontmatter}

\title{Dynamics of $(2+1)$-dimensional SOS surfaces above
a~wall: Slow mixing induced by entropic repulsion\thanksref{T1}}
\runtitle{Dynamics of SOS surfaces above a wall}

\begin{aug}
\author[A]{\fnms{Pietro} \snm{Caputo}\ead[label=e1]{caputo@mat.uniroma3.it}},
\author[B]{\fnms{Eyal} \snm{Lubetzky}\corref{}\ead[label=e2]{eyal@microsoft.com}},
\author[A]{\fnms{Fabio} \snm{Martinelli}\ead[label=e3]{martin@mat.uniroma3.it}},\break
\author[C]{\fnms{Allan} \snm{Sly}\ead[label=e4]{sly@stat.berkeley.edu}}
\and
\author[D]{\fnms{Fabio Lucio} \snm{Toninelli}\ead[label=e5]{toninelli@math.univ-lyon1.fr}\thanksref{t2}}
\runauthor{P. Caputo et al.}
\affiliation{Universit\`a Roma Tre, Microsoft Research, Universit\`a
Roma Tre,\break University of California and CNRS and Universit\'e Lyon 1}
\address[A]{P. Caputo\\
F. Martinelli\\
Dipartimento di Matematica e Fisica\\
Universit\`a Roma Tre\\
Largo S. Murialdo 1\\
00146 Roma\\
Italia\\
\printead{e1}\\
\phantom{E-mail: }\printead*{e3}} 
\address[B]{E. Lubetzky\\
Microsoft Research\\
One Microsoft Way\\
Redmond, Washington 98052-6399\\
USA\\
\printead{e2}}
\address[C]{A. Sly\\
Department of Statistics\\
University of California, Berkeley\\
Berkeley, California 94720\\
USA\\
\printead{e4}}
\address[D]{F. L. Toninelli\\
CNRS and Universit\'e Lyon 1\\
Institut Camille Jordan\\
43 bd du 11 novembre 1918\\
69622 Villeurbanne\\
France\\
\printead{e5}\hspace*{8pt}}
\end{aug}
\thankstext{T1}{Supported by the European Research Council through the
``Advanced Grant'' PTRELSS 228032.}
\thankstext{t2}{Supported in part by ANR grant SHEPI.}

\received{\smonth{6} \syear{2012}}
\revised{\smonth{1} \syear{2013}}

%
\begin{abstract}
We study the Glauber dynamics for the $(2+1)\mathrm{D}$
Solid-On-Solid model above a hard wall and below a far away ceiling, on
an $L\times L$ box of $\mathbb{Z}^2$ with zero boundary conditions, at
large inverse-temperature $\beta$. It was shown by Bricmont,
El~Mellouki and Fr{\"o}hlich [\textit{J. Stat. Phys.} \textbf{42}
(1986) 743--798] that the floor constraint induces an entropic
repulsion effect which lifts the surface to an average height $H
\asymp(1/\beta)\log L$. As an essential step in understanding the
effect of entropic repulsion on the Glauber dynamics we determine the
equilibrium height $H$ to within an additive constant:
$H=(1/4\beta)\log L+O(1)$. We then show that starting from zero initial
conditions the surface rises to its final height $H$ through a sequence
of metastable transitions between consecutive levels. The time for a
transition from height $h=aH$, $a\in(0,1)$, to height $h+1$
is roughly $\exp(cL^a)$ for some constant $c>0$. 
In particular, the mixing time of the dynamics is exponentially large
in $L$, that is, $T_{\mathrm{MIX}}\geq e^{cL}$. We also provide the
matching upper bound $T_{\mathrm{MIX}}\leq e^{c'L}$, requiring a
challenging analysis of the statistics of height contours at low
temperature and new coupling ideas and techniques. Finally, to
emphasize the role of entropic repulsion we show that without a floor
constraint at height zero the mixing time is no longer exponentially
large in $L$.
\end{abstract}

%
\begin{keyword}[class=AMS]
\kwd{60K35}
\kwd{82C20}
\end{keyword}
\begin{keyword}
\kwd{SOS model}
\kwd{Glauber dynamics}
\kwd{random surface models}
\kwd{mixing times}
\end{keyword}

\end{frontmatter}

\setcounter{footnote}{2}
\section{Introduction}\label{sec-intro}

The $(d+1)$-dimensional \textit{Solid-On-Solid} model is a crystal
surface model whose definition goes back to Temperley \cite{Temperley}
in 1952 (also known as the Onsager-Temperley sheet). Its configuration
space on a finite box $\Lambda\subset\mathbb{Z}^d$ with a floor (wall)
at $0$, a ceiling at some $n^+$ and zero boundary conditions is the set
$\Omega_{\Lambda,n^+}$ of all height functions $\eta$ on $\mathbb
{Z}^d$ such that $\Lambda\ni x \mapsto\eta_x\in\{0,1,\ldots,n^+\}$
whereas $\eta_x=0$ for all $x\notin\Lambda$. The probability of
$\eta\in\Omega_{\Lambda,n^+}$ is given by the Gibbs distribution
%
%
\begin{equation}
\label{eq-ASOS} \pi_\Lambda(\eta) = \frac{1}{Z_\Lambda} \exp \biggl(-\beta
\sum_{x\sim y}|\eta_x-\eta_y|
\biggr),
\end{equation}
where $\beta>0$ is the inverse-temperature, $x\sim y$ denotes a
nearest-neighbor bond in the lattice $\mathbb{Z}^d$ and the normalizing
constant $Z_\Lambda$ is the partition function.

Numerous works have studied the rich random surface phenomena, for
example, roughening, localization/delocalization, layering and wetting
to name but a few, exhibited by the SOS model and some of its many
variants. These include the \textit{discrete Gaussian} (replacing
$|\eta_x-\eta_y|$ by $|\eta_x-\eta_y|^2$ for the integer analogue of
the Gaussian free field), \textit{restricted} SOS (nearest neighbor
gradients restricted to $\{0,\pm1\}$), \textit{body centered} SOS
\cite{vanBeijeren2}, etc. (for more on these flavors see, e.g.,
\mbox{\cite{Abraham,Baxter,Bolt2}}).

Of special importance is SOS with $d=2$, the only dimension featuring a
\textit{roughening transition}. Consider the SOS model without
constraining walls (the height function $\eta$ takes values in
$\mathbb{Z}$). For $d=1$, it is well known
\cite{Temperley,Temperley56,Fisher} that the SOS surface is
\textit{rough} (delocalized) for any $\beta>0$, that is, the expected
height at the origin (in absolute value) diverges in the thermodynamic
limit $|\Lambda|\to\infty$. However, for $d\geq3$ a Peierls argument
shows that the surface is \textit{rigid} (localized) for any $\beta>0$
(see \cite{BFL}), that is, $|\eta_0|$ is uniformly bounded in
expectation. This is also the case for $d=2$ and large enough $\beta$
\cite{BW,GMM}. That the surface is rough for $d=2$ at high
temperatures was established in seminal works of Fr{\"o}hlich and
Spencer \cite{FS1,FS2,FS3}. Numerical estimates for the critical
inverse-temperature $\betaR$ where the roughening transition takes
place suggest that $\betaR\approx0.806$.

One of the main motivations
for studying an SOS surface constrained between two walls, both its
statics and its dynamics, stems from its correspondence with the Ising
model in the phase coexistence region. For concreteness, take a box of
side-length $L$ in $\mathbb{Z}^3$ with minus boundary conditions on the
bottom face and plus elsewhere. One can view the $(2+1)\Dim$ SOS
surface taking values in $\{0,\ldots,L\}$ as the interface of the minus
component incident to the bottom face, in which case the Hamiltonian
in~(\ref{eq-ASOS}) agrees with that of Ising up to bubbles in the bulk.
At low enough temperatures bubbles and interface overhangs are
microscopic, thus SOS should give a qualitatively correct approximation
of Ising (see~\cite{ABSZ,Fisher,PV}). Indeed, in line with the
$(2+1)\Dim$ SOS picture, it is known \cite{vanBeijeren1} that the
$3\Dim$~Ising model undergoes a roughening transition at some
$\betaR^{\mathrm{IS}}$ satisfying $\beta_c(3) \leq\betaR^{\mathrm{IS}}
\leq\beta_c(2)$ [where $\beta_c(d)$ is the critical point for Ising on
$\mathbb{Z}^d$], yet there is still no rigorous proof that
$\betaR^{\mathrm{IS}} > \beta_c(3)$ (see \cite{Abraham} for more
details).

When the $(2+1)\Dim$ SOS surface is constrained to stay above a hard
wall (or floor), Bricmont, El~Mellouki and Fr{\"o}hlich \cite{BEF}
showed in 1986 the appearance of the \textit{entropic repulsion}: for
large enough $\beta$, the floor pushes the SOS surface to diverge even
though $\beta> \betaR$. More precisely, using Pirogov--Sina\"{\i}
theory (see the review \cite{Sinai}), the authors of \cite{BEF} showed
that the SOS surface on an $L\times L$ box rises, amid the penalizing
zero boundary, to an average height $H(L)$ satisfying $(1/C\beta)\log
L\leq H(L) \leq(C/\beta)\log L$ for some absolute constant $C>0$, in
favor of freedom to create spikes downwards.

Entropic repulsion is one of the key features of the physics of random
surfaces. This phenomenon has been rigorously analyzed mainly for some
continuous-height variants of the SOS model in which the interaction
potential $|\eta_x-\eta_y|$ is replaced by a \textit{convex} potential
$V(\eta_x-\eta_y)$; see, for example,
\cite{BDZ,BDG,DG,Bolt,Vel1,Vel2}, see also \cite{ADM} for a recent
analysis of the wetting transition in the SOS model. As we will see
below, entropic repulsion has a profound impact not only on the
\textit{equilibrium shape} of the surface but also on its \textit{time
evolution} under natural Markovian dynamics for the interface. The
rigorous analysis of these dynamical effects of entropic repulsion will
be the central focus of this work.

The dynamics we consider is the heat bath dynamics, or Gibbs sampler,
for the equilibrium measure $\pi_\Lambda$, that is, the discrete time
Markov chain where at each step a site $x\in\Lambda$ is picked at
random and the height $\eta_x$ of the surface at $x$ is replaced by a
random variable $\eta'_x\in\{0,\ldots,n^+\}$ distributed according to
the conditional probability $\pi_\Lambda(\cdot|\eta_y, y\neq x)$.
This defines a Markov chain with state space $\Omega_{\Lambda,n^+}$,
reversible with respect to $\pi_\Lambda$, commonly referred to as the
Glauber dynamics. As explained below, our results apply equally well to
other standard choices of reversible Markov chains, such as, for
example, the Metropolis chain where only moves of the type
$\eta'_x=\eta_x\pm1$ are allowed.

The mixing time $\tmix$ is defined as the number of steps needed to
reach approximate stationarity with respect to total variation
distance, see Section~\ref{sec-defs} for definitions.

The main result of this paper is that
the mixing of Glauber dynamics for the $(2+1)\Dim$ SOS is exponentially
slow, due to the nature of the entropic repulsion effect.

%
%
\begin{maintheorem}
\label{th-principale} For any sufficiently large inverse-temperature
$\beta$ there is some $c(\beta)>0$ such that the following holds for
all $L\in\mathbb{N}$. The mixing time $\tmix$ of the Glauber dynamics
of the $(2+1)\Dim$ SOS model on $\Lambda=\{1,\ldots,L\}^2$ with zero
boundary conditions, floor at zero and ceiling at $n^+$ with $\log
L\leq n^+\leq L$ satisfies
%
%
\begin{equation}
\label{eq-tmix-principale} e^{c L}\leq\tmix\leq e^{(1/c) L}.
\end{equation}
\end{maintheorem}

The exponentially large mixing time in (\ref{eq-tmix-principale}) is in
striking contrast with the rapid mixing displayed by Glauber dynamics
of the $(1+1)\Dim$ SOS model \mbox{\cite{MS,CMT}}. When $d=1$ it is
known that the main driving effect is a mean-curvature motion which
induces a diffusive relaxation to equilibrium, with $\tmix$ of order
$L^2$ up to $\operatorname{poly}(\log L$) corrections. As we will see,
in $(2+1)\Dim$ instead the main mechanism behind equilibration is a
series of metastable transitions through an increasing series of
effective energy barriers caused by the entropic repulsion. This is
also in contrast with the behavior of related interface models with
continuous heights as, for example, in \cite{DN,FFNV}.
%
%

\subsection{Metastability and entropic repulsion}\label{sec-metastability}
Consider the evolution of an initially flat surface at height zero. We
shall give a rough description of how it rises to the final height
$H(L)$ through a series of metastable states indexed by $h \geq0$.
Roughly speaking the surface in state with label $h$ is approximately
flat at height $h$ with rare up or downward spikes. Of course downward
spikes cannot be longer than $h$ because of the hard wall. If $h<H(L)$
then the surface has an advantage to rise to the next level $h+1$. This
is due to the gain in entropy, measured by the possibility of having
\textit{downward} spikes of length $h+1$, beating the energy loss from
the zero boundary conditions.

The mechanism for jumping to the next level should then be very similar
to that occurring in the $2\Dim$ Ising model at low temperature with a
small external field opposite to the boundary conditions (see
\cite{SC,SS}). Specifically, via a large deviation the surface at
height $h$ creates a large enough droplet of sites at height $h+1$
which afterwards expands to cover most of the available area. The
energy/entropy balance of any such droplet is roughly\footnote{Here we
are neglecting finer results taking into account the surface tension
and the associated Wulff theory; the basic conclusions of this
reasoning are nevertheless still valid.} of order $\beta|\gamma|-
e^{-4\beta(h+1)}A(\gamma)$ where $|\gamma|$ and $A(\gamma)$ are the
boundary length and area, respectively, and the effective field
$e^{-4\beta(h+1)}$ represents the probability of a $1\times1\times
(h+1)$ isolated downward spike. Simple considerations suggest then that
the critical length of a droplet should be proportional to $ e^{4\beta
(h+1)}$. Finally, the well-established metastability theory for the
$2\Dim$ Ising model indicates that the activation time $T_h$ for such a
critical droplet should be exponential in the critical
length\footnote{At the early stages of the process when $h$ is quite
small the activation time has important corrections to this guess due
to the many locations in the $L\times L$ box where the droplet can
appear. However, as soon as $h$ becomes of order $\log\log L$ these
entropic corrections become negligible.} (i.e., a double exponential in
$h$) as seen in Figure~\ref{fig:meta}.

Of course, in order to establish, even partially, the above picture and
to prove the asymptotic of $\log(\tmix)$ as per
(\ref{eq-tmix-principale}) it is imperative to estimate the final
equilibrium height of the surface $H(L)$ to within an \textit{additive}
$O(1)$. In Section~\ref{sec-eq} (Theorem~\ref{thm-equil-shape}), we
improve the estimates of \cite{BEF} to show that in fact the typical
height of the surface at equilibrium is $H(L)+O(1)$, where
%
%
\begin{equation}
\label{eq-hL} H(L)= \biggl\lfloor\frac1{4\beta}\log L \biggr\rfloor.
\end{equation}
%

The aforementioned picture of the evolution of the SOS surface through
a series of metastable states is quantified by the following result.

\begin{figure}

\includegraphics{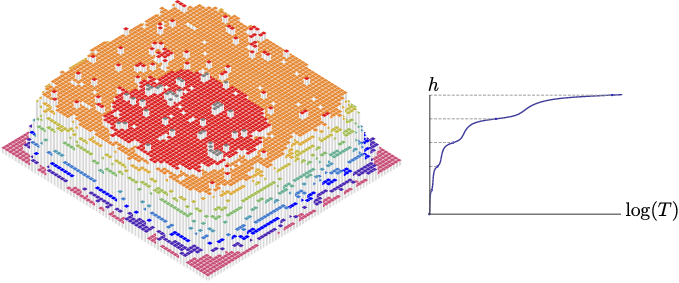}

\caption{Illustration of the series of metastable states in the
surface evolution. The dynamics waits time $e^{c \exp(4\beta h)}$
until the formation of a macroscopic droplet (marked in red) which
eventually raises the average height from $h-1$ to $h$.}\label{fig:meta}
\end{figure}

%
%
\begin{maintheorem}
\label{th-cascade} For any sufficiently large inverse-temperature
$\beta$ there is some $c(\beta)>0$ such that the following holds. Let
$(\eta(t))_{t\geq0}$ be the Glauber dynamics for the SOS model on
$\Lambda=\{1,\ldots,L\}^2$ with zero boundary conditions, floor at zero
and ceiling at $n^+$ with $\log L\leq n^+\leq L$, started from the
all-zero initial state. Fix $a\in(0,1)$ and let $\tau_a = \min\{ t\dvtx
\eta(t) \in\Omega_a\}$ where
%
%
\begin{equation}
\label{eq-Omega-a-def} \Omega_a= \bigl\{\eta\in
\Omega_{\Lambda,n^+}\dvtx\# \bigl\{x\dvtx\eta_x\geq a H(L) \bigr\} >
\tfrac9{10}|\Lambda| \bigr\}.
\end{equation}
Then $\lim_{L\to\infty}\pi_\Lambda(\Omega_a) = 1$ and yet
%
%
\begin{equation}
\label{eq-cascade} \lim_{L\to\infty} \mathbb{P}
\bigl(e^{c L^a} \leq\tau_a \leq e^{(1/c)L^a} \bigr)=1.
\end{equation}
\end{maintheorem}

In fact, we prove this with the constant $\frac9{10}$
in~(\ref{eq-Omega-a-def}) replaced by $1-\varepsilon(\beta)$ where
$\lim_{\beta\to\infty}\varepsilon(\beta)= 0$. Moreover, the statement
of the above theorem remains valid when $a=a(L)\to1$ as long as the
target level $h=a H(L)$ satisfies $h \leq H(L) - c$ for some
sufficiently large $c(\beta)>0$.

\begin{remark*}
A natural conjecture in light of Theorem~\ref{th-cascade} is that there
exists a~constant $\lambda$ such that the distribution of $\tau
_a\times e^{-\lambda L^a}$ converges as $L\to\infty$ to an exponential
random variable.
\end{remark*}

We wish to emphasize that, as will emerge from the proof, the
exponential slowdown of equilibration is a coupled effect of entropic
repulsion \textit{and} of the rigidity of the interface. In particular,
the following rough upper bound shows that the situation is very much
different when the floor constraint is absent (yet the ceiling
constraint remains unchanged).

%
\begin{maintheorem}\label{th-nomuro} Consider the $(2+1)\Dim$ SOS setting as in
Theorem~\ref{th-principale} with the exception that the surface heights
belong to the \textit{symmetric} interval\break
$[-n^+,n^+]$. 
Then $\tmix\leq\exp(o(L))$.
\end{maintheorem}

Specifically, our proof gives the estimate $\tmix\leq
\exp(L^{(1/2)+o(1)})$. No effort was made to improve the exponent
$\frac12$ as we would expect the true mixing behavior to be polynomial
in $L$. We further expect that in the presence of a floor yet for
$\beta< \betaR$ the mixing time will have a different scaling with the
side-length $L$.

It is useful to compare our results with those of \cite{CM}, where the
Glauber dynamics for the $(2+1)\Dim$ SOS above a hard wall, at low
temperature and in the presence of a weak attracting (towards the wall)
external field was analyzed in details. There it was proved that
certain critical values of the external field induce exponentially slow
mixing while for all other values the dynamics is rapidly mixing.
Although the slow mixing proved in \cite{CM} is similar to the one
appearing in (\ref{eq-tmix-principale}), the physical phenomenon behind
it is very different. When an external field is present, a~critical
value of it results in two possible and roughly equally likely heights
for the surface. In this case, slow mixing arises because of the
presence of a typical bottleneck in the phase space related to the
bi-modal structure of the equilibrium distribution. In the setting of
Theorems~\ref{th-principale} and~\ref{th-cascade} instead, there is in
general no bi-modal structure of the Gibbs measure and the slow mixing
takes place because of a \textit{multi-valley} structure of the
effective energy landscape induced by the entropic repulsion which
produces a whole family of bottlenecks.

\subsection{Methods}\label{sec-methods}
We turn to a description of the main techniques involved in the proof
of the main theorems. Our results can be naturally divided into three
families: equilibrium estimates, lower bounds on equilibration times,
and upper bounds on equilibration times.

\subsubsection*{Equilibrium estimates}
Our proof begins by deriving estimates for the equilibrium distribution
which are crucial to the understanding of the dynamics (as discussed in
Section~\ref{sec-metastability}) and of independent interest. Over most
of the surface, the height is concentrated around $H(L)$ as defined
in~(\ref{eq-hL}) with typical fluctuations of constant size. Achieving
estimates with a precision level of an additive $O(1)$ turns out to be
essential for establishing the order of the mixing time exponent:
indeed, analogous estimates up to some additive $g(L)$ tending to
$\infty$ with $L$ would set off this exponent by a factor of
$e^{O(g)}$.

The main techniques deployed for this part are a range of Peierls-type
estimates for what we refer to as $h$-contours, defined as closed dual
circuits with values at least $h$ on the sites along their interior
boundary and at most $h-1$ along their exterior boundary. In the
simpler setting of no floor or ceiling (i.e., the sites are free to
take all values in $\mathbb{Z}$ as their heights), the map $S_\gamma$ which
decreases all sites inside an $h$-contour $\gamma$ by 1 is bijective
and increases the Hamiltonian by $|\gamma|$, the length of the contour.
Hence, the probability of a given $h$-contour in this setting is
bounded by $\exp(-\beta|\gamma|)$. Iterating estimates of this form
allows us to bound the deviations of the sites with the correct
asymptotic in the setup of having no walls.

The presence of a floor renders this basic Peierls argument invalid
since the map $S_\gamma$ may leave sites in the interior with negative
values. Rather than a technicality, this in fact lies at the heart of
the entropic repulsion effect. We resort to estimating the probability
that a given $h$-contour has a strictly positive interior, a quantity
directly involving its area. By analyzing an isoperimetric tradeoff
between the contour's area and perimeter, we show that large contours
above height $H(L)$ are unlikely, which in turn implies $O(1)$ typical
fluctuations above this level. For a lower bound on the typical height
of the surface we show that if too many sites are below $H(L) -k$ then
the loss in energy due to raising the entire surface by 1 is more than
compensated by the increased entropy from the freedom to create
downward spikes reaching 0. Put together, these estimates guarantee
that the height of most sites is within a constant of $H(L)$.

\subsubsection*{Equilibration times: Lower bounds}
Fix $h=aH(L)-1$ with $a\in(0,1)$ and consider the restricted ensemble
obtained by conditioning the equilibrium measure~on the event $A$ that
all $h$-contours have area smaller than $\delta L^{2a}$, for some small
$\delta>0$. Our equilibrium estimates imply that in this restricted
ensemble:
\begin{longlist}[(iii)]
\item[(i)] each $h$-contour is actually very small [e.g., with area
less than $\log(L)^2$], with very high probability;

\item[(ii)] the probability of the boundary of $A$ is $O[\exp(-c
L^a)]$;

\item[(iii)] the probability of having a large density of heights at
least $h+1=aH(L)$ is $O[\exp(-c L^a)]$.
\end{longlist}
In some sense (i), (ii) and (iii) above establish a bottleneck the
Markov chain must pass through and thus provide the sought lower bound
of $\exp(c(\beta)L^a)$ on the typical value of the hitting time
$\tau_a$ in Theorem~\ref{th-cascade} when the initial state is the all
zero configuration. In fact, the initially flat configuration can be
replaced by monotonicity by the restricted ensemble described above.
Then, in order for $\tau_a$ to be smaller than $T$, either the dynamics
has gone through the boundary of $A$ before $T$ or the event described
in (iii) occurred without leaving $A$. Either way an event with
$O(\exp(-c L^a))$ probability occurred and the minimal time to see it
must be proportional to the inverse of its probability.

\subsubsection*{Equilibration times: Upper bounds}
By the monotonicity of the system, it is enough to consider the chain
starting from the maximum and minimum configurations. The natural
approach is to apply the well-known canonical paths method
(see~\cite{DSa,DSt,JS,Sinclair} for various flavors of the method). As
the cut-width of the cube is $L^2$, the most na\"{\i}ve application of
this approach would give a bound of $\exp(O(L^2))$. A better bound can
be shown by considering the problem with maximum height $n^+= \log L$.
In this case, the cut-width is of order $L \log L$ yielding a mixing
time upper bound of $\exp(O(L \log L))$. Since the height fluctuations
are logarithmic, we can iterate this analysis using monotonicity and
censoring to get a bound of $\exp(O(L \log L))$ for the original model
with $n^+=L$, vs. our lower bound of $\exp(c L)$. However, removing the
$\log L$ factor that separates these exponents entails a significant
amount of extra work.

The basic structure of the proof is to first establish a burn-in phase
where we show that, starting from the maximal and minimal
configurations, the process reaches a ``good'' set 
featuring small deviations from the equilibrium level $H(L)$. From
there, we establish a modified canonical paths estimate
(Theorem~\ref{th-pathflowsgeneral}), showing that it is enough to
establish a reasonable probability of hitting the good set from any
starting location together with a good canonical paths estimate
restricted to this set. This new tool, which we believe is of interest
on its own right, is described in detail in
Section~\ref{sec-improved-canonical} and proved in a general context in
Section~\ref{sec-canonical-proof}.


Showing that the surface falls down from the ceiling (the maximum
height) to $H(L)$, as depicted in Figure~\ref{fig:dyn}, ought to have
been the easier part of the burn-in argument since high above the floor
there is no entropic repulsion effect. Unfortunately a number of major
technical challenges must be overcome.
%
%
\begin{figure}

\includegraphics{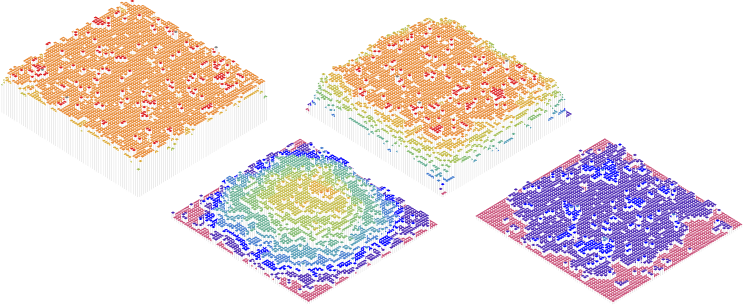}

\caption{Glauber dynamics for SOS on a $64 \times64$ square lattice
at $\beta=0.9$ from an initial state $\eta\equiv10$.
Surface gradually falls towards level $H=1$. Snapshots at $t=10$ (top
left), $t=100$, $t=1000$, $t={}$10,000 (bottom right) in cont.
time.}\label{fig:dyn}
\end{figure}

First, the effect of the entropic repulsion is still apparent for the
estimates we require when the surface is fairly close to $H(L)$.
To overcome this, we add a small external field to the model, thereby
modifying the mixing time by a factor of at most $\exp(O(L))$ (which is
large but still of the same order as our designated upper bound) and
tilting the measure to remove these entropic repulsion effects. Second,
while our main equilibrium estimates were proved using Peierls-type
estimates, for the burn-in we require some of the cluster expansion
machinery of \cite{DKS} which we extend to the SOS framework. This
involves a number of challenges including showing that the contours we
consider do not interact significantly with the boundary conditions, a
highly nontrivial fact. Implementing this scheme is the biggest
challenge of the paper and we provide extensive notes for the reader in
these sections to explain the rather technical proofs.

Finally, the fact that the surface rises from the floor (the all zero
initial condition) to the vicinity of the equilibrium height $H(L)$ in
time $\exp(O(L))$ is proved via an unusual inductive scheme. Unlike
other multi-scale inductive schemes, somewhat surprisingly the one used
here does not incur any penalizing factor on the upper bound. We first
prove weaker bounds on the mixing time and use these estimates to show
that a smaller box of side-length $L/\log L$ mixes by time
$\exp(O(L))$. By monotonicity, we can use this to bound the distance
from the equilibrium height of the surface in the original box by
$H(L)-H(L/\log L)$.
By using this height estimate along with our canonical paths result, we
get improved bounds on the mixing time. This in turn allows us to take
larger sub-boxes and iteratively achieve better and better estimates on
the distance to $H(L)$. After sufficiently many iterations, we show
that the surface reaches height $H(L)-O(1)$ in time $\exp(O(L))$ and
thereafter the canonical paths estimate completes the proof.

\subsection{Related open problems}

\subsubsection*{Tilted walls}
An interesting and to our knowledge widely open problem concerns the
SOS model with a nonhorizontal hard wall, that is, when the constraint
$\eta_x\geq0$ is replaced by $\eta_x\geq\phi_x^{\mathbf n}$, where
$\phi_x^{\mathbf n}$ denotes the discrete approximation of the plane
orthogonal to the unit vector ${\mathbf n}$, and ${\mathbf n}$ is
assumed to have all components different from zero.
The equilibrium fluctuations for $\beta=+\infty$ can be analyzed via
their representation through dimer coverings \cite{KOS} and the
variance of the surface height in the middle of the box can be shown to
be $O(\log L)$; see \cite{CMST}, Section~5, for a proof. Moreover, at
$\beta=+\infty$, as far as the dynamics is concerned, it has been
proved \cite{CMT} that the mixing time is of order $L^2$ up to
$\mathrm{polylog}(L)$ corrections and that the relaxation process is
driven by mean curvature motion. The case $\beta<+\infty$, however,
remains open both for equilibrium fluctuations and for mixing time
bounds.

\subsubsection*{Mixing time for Ising model}

In view of the natural connection with the Ising model, the study of
Glauber dynamics for the SOS can also shed some light on a, still open,
central problem in the theory of stochastic Ising models: its mixing
time under an \textit{all-plus} boundary in the phase coexistence
region. 
The long-standing conjecture is that the mixing time of Glauber
dynamics for the Ising model
on a box of side-length $L$ with all-plus boundary should be at most
polynomial in $L$ at \textit{any temperature}. More precisely, the
convergence to equilibrium should be driven by a mean-curvature motion
of the interface of the minus droplet in accordance with Lifshitz's law
\cite{Lifshitz}. For instance, the mixing time of Glauber dynamics for
Ising on an $L\times L$ square lattice is conjectured \cite{FH} to be
of order $L^2$ in continuous time. This was confirmed at zero
temperature~\cite{CSS,FSS,LST} and near-zero temperatures~\cite{CMST},
yet the best-known upper bound for finite $\beta>\beta_c$ remains quite
far, a~quasi-polynomial bound of $L^{O(\log L)}$ due to \cite{LMST}.
The understanding of $3\Dim$ Ising is far more limited: while at zero
temperature bounds of $L^{2+o(1)}$ were recently proven in \cite{CMST},
no sub-exponential mixing bounds are known at any finite
$\beta>\beta_c$.

\section{Definitions and tools}\label{sec-defs}

\subsection{Glauber dynamics for solid-on-solid}\label{sec-model}
Let $\sqcup$ and $\sqcap$ denote the minimal and maximal configurations
in $\Omega_{\Lambda,n^+}$, that is, $\sqcup_x=0$ and $\sqcap_x=n^+$ for
every \mbox{$x\in\Lambda$}.
Given a finite connected subset $\Lambda\subset\mathbb{Z}^2$, let
$\partial\Lambda$ denote its external boundary, that is, the set of
sites in $\Lambda^c$ which are at distance $1$ from $\Lambda$. To
extend the SOS definition to arbitrary boundary conditions (b.c.) given
by $\xi\dvtx \mathbb{Z}^2 \to\mathbb{Z}$, define the SOS Hamiltonian
with b.c. $\xi$ to be
%
%
\begin{equation}
\label{eq-Hamiltonian} \mathcal{H}_\Lambda^\xi(\eta):=
\frac12 \mathop{\sum_{x,y\in\Lambda}}_{|x-y|=1}|
\eta_x-\eta_y| +\mathop{\sum
_{x\in\Lambda, y\in
\partial\Lambda}}_{|x-y|=1}|\eta_x-
\xi_y|.
\end{equation}
Given $\beta>0$ and $n^+$, the Gibbs measure $\pi^\xi_{\Lambda}$ on
$\Omega_{\Lambda,n^+}$ with b.c. $\xi$ is defined as
%
%
\begin{equation}
\label{eq-Gibbs} \pi^\xi_{\Lambda}(\eta)=\frac1{Z^\xi_\Lambda}
\exp \bigl[-\beta\mathcal{H}^\xi_\Lambda(\eta) \bigr].
\end{equation}
%

%
%
\begin{notation}\label{notation}
In the sequel when the b.c. $\xi\equiv n\in\mathbb{Z}$ we will use the
abbreviated form $\pi_\Lambda^n$. We will occasionally drop the
subscript $\Lambda$ and superscript $\xi$ from the notation of
$\pi_\Lambda^\xi$ when there is no risk of confusion. Moreover, we will
need to address the following variants of $\pi_\Lambda^\xi$:
\begin{longlist}[(iii)]
\item[(i)] the measure $\hat\pi{}^n_\Lambda$ of SOS without walls (no
floor and no ceiling) and with b.c. at height $n$;

\item[(ii)] the measure $\Pi^\xi_\Lambda$ corresponding to $\pi_\Lambda
^\xi$ with $n^+=+\infty$ (no ceiling);

\item[(iii)] starting from Section~\ref{sec-mtub} the measures
$\pi_\Lambda^{\xi,f}$ (and its analog $\Pi_\Lambda^{\xi,f}$ with no
ceiling) corresponding to the SOS Hamiltonian with an additional
external field of the form $\frac1L \sum_{y\in \Lambda}f(\eta_y)$
with $|f|_{\infty}=O(e^{-c\beta})$ for some fixed constant $c$
[see, e.g.,~(\ref{eq-10})].
\end{longlist}
\end{notation}

The dynamics under consideration is a discrete-time Markov chain\break
$(\eta(t))_{t=0,1,\ldots}$, defined as follows.
%
To construct $\eta(t+1)$ given $\eta(t)$,
\begin{itemize}
\item pick a site $x\in\Lambda$ uniformly at random;

\item sample a new value for $\eta_x(t+1)$ from the equilibrium
measure $\pi_\Lambda^\xi$ conditioned on the current heights at
the neighboring sites, that is, $\eta(t+1)\sim
\pi^\xi_\Lambda(\eta\in\cdot\mid\eta_y=\eta_y(t)\ \forall y\neq
x)$.
\end{itemize}
The law of the process with initial condition $\zeta$ is denoted by
$\mathbb{P}^\zeta$, the configuration at time $t$ is $\eta^\zeta (t)$
and its law is $\mu^{\zeta}_t$. When there is no need to emphasize the
initial condition, we simply write $\eta(t)$ for the configuration at
time $t$. It is well known that this Markov chain 
is reversible w.r.t. the invariant measure $\pi^\xi_\Lambda$.

The mixing time $\tmix$ is defined to be the time the process takes to
converge to equilibrium in total variation distance, that is,
%
%
\begin{eqnarray}
\label{eq-tmix} \tmix&=&\inf \biggl\{t>0\dvtx\max_{\eta\in\Omega
_{\Lambda,n^+}} \bigl\|
\mu_t^\eta-\pi^\xi_\Lambda \bigr\|\leq
\frac1{2e} \biggr\},
\end{eqnarray}
where $\|\mu-\nu\|$ denotes the total variation distance between two
measures $\mu,\nu$. It is well known (e.g., \cite{LPW}, Section~4.5)
that the total variation distance from equilibrium decays exponentially
with rate $\tmix$, namely
%
%
\begin{equation}
\label{eq-sottomol} \max_{\eta\in\Omega_{\Lambda,n^+}} \bigl\|\mu _t^\eta-
\pi^\xi_\Lambda \bigr\|\leq e^{-\lfloor t/\tmix\rfloor}.
\end{equation}
The relaxation time $\trel$ is the inverse of the spectral gap of the
transition kernel of the chain. The spectral gap, denoted by $\gap$,
has the following variational characterization:
%
%
\begin{eqnarray}
\label{eq-gap} \gap&=&\inf\frac{\pi^\xi_\Lambda
(f(I-P)f)}{\operatorname{Var}_{\pi^\xi_\Lambda}(f)},
\end{eqnarray}
where $P$ is the transition kernel of the chain, $I$ is the identity
matrix and the infimum is over all nonconstant functions $f$. The
following standard inequality (see, e.g., \cite{LPW}, Section~12.2, and
\cite{SaloffCoste}) relates the mixing time and the relaxation time:
%
%
\begin{equation}
\label{eq-20} \trel-1\leq\tmix\leq\trel\log(2e/\pi_{\min})
\end{equation}
with $\pi_{\min}:=\min_{\eta\in\Omega_{\Lambda,n^+}}\pi^\xi
_{\Lambda}(\eta)$. By definition, in the SOS model
$|\Omega_{\Lambda,n^+}|=(n^+ + 1)^{|\Lambda|}$ and
$\pi_{\min}\geq\exp(-4\beta|\Lambda|n^+)/|\Omega_{\Lambda,n^+}|$, thus
for large enough $n^+$
%
%
\begin{equation}
\label{eq-20bis} \trel-1\leq\tmix\leq5\beta|\Lambda|n^+ \trel.
\end{equation}

From now on we refer to the Markov chain defined above as the Glauber
dynamics. One can use standard comparison estimates to obtain
equivalent versions of our main results for other standard choices of
Markov chains that are reversible w.r.t. the SOS Gibbs measures, such
as, for example, the Metropolis chain with $\pm$1 updates.
Indeed, since the heights are confined within an interval of size
$O(L)$ it is not hard to see that the ratio between the different
mixing times is at most polynomial in $L$. We refer to, for example,
\cite{CMST}, Section~6, for a detailed argument in this direction.

\subsection{Monotonicity}\label{sec-mon}
Our dynamics is monotone (or attractive) in the following sense. One
equips the configuration space with the natural partial order such that
$\sigma\leq\eta$ if $\sigma_x\leq\eta_x$ for every $x\in\Lambda
$. It is
possible to couple on the same probability space the evolutions
corresponding to every possible initial condition $\zeta$ and boundary
condition $\xi$ in such a way that if $\xi\leq\xi'$ and
$\zeta\leq\zeta'$ then $\eta^{\zeta}(t,\xi)\leq\eta^{\zeta
'}(t,\xi')$
for every $t$. Here, we indicated explicitly the dependence on the
boundary conditions but we will not do so in the following. The law of
the \textit{global monotone coupling} is denoted $\mathbb{P}$.

A first consequence of monotonicity is that the FKG inequalities
\cite{FKG} hold: if $f$ and $g$ are two increasing (w.r.t. the above
partial ordering) functions, then
$\pi^\xi_\Lambda(fg)\geq\pi^\xi_\Lambda(f) \pi^\xi_\Lambda (g)$ and the
same holds for the measure $\hat\pi{}^\xi_\Lambda$ without the
floor/ceiling.

Monotonicity also implies the following standard fact [cf., e.g., the
proof of~\cite{MT}, equation~(2.10)]: for every initial condition
$\eta$ and boundary condition~$\xi$,
%
%
\begin{equation}
\label{eq-22} \bigl\|\mu_t^\eta-\pi_\Lambda^\xi
\bigr\|\leq2 n^+|\Lambda|\max \bigl( \bigl\|\mu_t^\sqcup-
\pi_\Lambda^\xi \bigr\|, \bigl\|\mu_t^\sqcap-
\pi_\Lambda^\xi \bigr\| \bigr).
\end{equation}

Another consequence of monotonicity is the so-called Peres--Winkler
censoring inequality. Take integers $0=t_0<t_1<\cdots<t_k=T$, a
sequence of $V_i\subset\Lambda$ and $0\leq a_i\leq b_i\leq n^+, i\leq
k$. Consider the following modified dynamics $(\tilde\eta(t))_{0\leq
t\leq T}$. To construct $\tilde\eta(t+1)$ given $\tilde\eta(t)$,
\begin{itemize}
\item pick a site $x\in\Lambda$ uniformly at random;

\item at time $t$ with $t_{i-1}<t\leq t_{i}$ do as follows:
\begin{itemize}
\item if $x\notin V_i$ or if $x\in V_i$ and
$\tilde\eta_x(t)\notin\{a_i,\ldots,b_i\}$ then do nothing;

\item if $x\in V_i$ and $a_i\leq\tilde\eta_x(t)\leq b_i$ then
replace its value with a new value $\tilde\eta_x(t+1)$ in
$\{a_i,\ldots,b_i\}$ with probability proportional to the
stationary measure conditioned on the value of the
neighboring columns,
\[
\tilde\eta(t+1)\sim\pi^\xi_\Lambda \bigl(\eta\in\cdot\mid
\eta_x\in\{a_i,\ldots,b_i\},
\eta_y=\tilde\eta_y(t)\ \forall y\neq x \bigr).
\]
\end{itemize}
\end{itemize}
Call $\tilde\mu{}^\nu_t$ the law at time $t$ when the initial
distribution is $\nu$. The following then holds:

%
\begin{theorem}[(Special case of \cite{PW}, Theorem 1.1)]\label{th-PW}
If the initial distribution $\nu$ is such that $\nu(\eta)/\pi _\Lambda
^\xi(\eta)$ is an increasing (resp., decreasing) function, then
$\tilde\mu{}_t^\nu(\eta)/\pi_\Lambda^\xi(\eta)$ is also increasing
(resp., decreasing) for $t\leq T$ and $\mu_t^\nu\preceq\tilde\mu
_t^\nu$ (resp., $\tilde\mu{}_t^\nu\preceq\mu_t^\nu$). In addition,
%
%
\begin{equation}
\label{eq-21} \bigl\|\mu_t^\nu-\pi_\Lambda^\xi
\bigr\|\leq \bigl\|\tilde\mu{}_t^\nu-\pi_\Lambda
^\xi \bigr\|.
\end{equation}
\end{theorem}

\subsection{An improved path argument}\label{sec-improved-canonical}

Geometric techniques can prove very effective in getting upper bounds
on the relaxation time and therefore on the mixing time of a Markov
chain~\cite{DSa,DSt,JS,Sinclair} (see also~\cite{LPW}, Section~13.5).
Let us recall the basic principle. 

Let $(X(t))_{t=0,1,\ldots}$ be a discrete-time reversible Markov chain
on a finite state space $\Omega$, with invariant measure $\pi$. For
$a,b\in\Omega$ such that the one-step transition probability $p(a,b)$
from $a$ to $b$ is nonzero, set $Q(a,b)=\pi(a)p(a,b)=Q(b,a)$. For each
couple $(c,d)\in\Omega^2$, fix a path $\gamma(c,d)=(x_1,\ldots,\break x_n)$ in
$\Omega$ with $x_1=c$, $x_n=d$ and $p(x_i,x_{i+1})\ne0$ and let
$|\gamma(c,d)|:=n$. Then the relaxation time of the Markov chain is
bounded as
%
%
\begin{equation}
\label{eq-18bis} \trel\leq\max_{(a,b)\dvtx Q(a,b)\ne0} \frac 1{Q(a,b)}\mathop{
\sum_{\eta,\eta'\in\Omega\dvtx}}_{ (a,b)\in
\gamma(\eta,\eta')} \bigl|\gamma \bigl(\eta,
\eta' \bigr) \bigr|\pi(\eta)\pi \bigl(\eta' \bigr).
\end{equation}
Here, $(a,b)\in\gamma(\eta,\eta')$ means that if $\gamma(\eta,\eta')=
(x_1,\ldots,x_n)$ then there exists~$i$ such that $a=x_i$, $b=x_{i+1}$.
The proof is simply an application of the Cauchy--Schwarz inequality;
see, for example, \cite{SaloffCoste}.

An 
application of this principle gives the following proposition.

%
%
\begin{proposition}
\label{prop-canpaths} For the SOS dynamics in the
$\Lambda=\{1,\ldots,L\}\times\break \{1,\ldots,m\}$, $m\leq L$, with floor at
height zero, ceiling at $n^+$ and b.c. $\xi$, one has for some
\mbox{$c=c(\beta)$}
%
%
\begin{equation}
\label{eq-19} \trel\leq c L^2 m^2n^+ \exp \bigl(7\beta m
n^+ \bigr)
\end{equation}
and, thanks to (\ref{eq-20bis}), $\tmix=\exp(O(\beta L n^+))$ if $L=m$.
\end{proposition}

That (\ref{eq-19}) easily follows from (\ref{eq-18bis}) was observed in
\cite{Martinelli94} in the case of the Glauber dynamics of the Ising
model (in this case one refers to the paths $\gamma(\eta,\eta')$ as
``canonical paths''). For SOS the proof is very similar and is given
for completeness in Section~\ref{sec-canpaths}.

However, this upper bound is too rough for our purposes since we have
$n^+\geq\log L$ while we wish to get a mixing time upper bound which is
exponential in $L$. Therefore, a significant part of the present work
is devoted to getting rid of the nonphysical factor $n^+$ in the
argument of the exponential in the r.h.s. of (\ref{eq-19}). Although
this task may appear to be mainly of technical nature it actually
requires a much deeper understanding of the actual behavior of the
dynamics compared to that provided by canonical paths, and the support
of new ideas.

One of the key ingredients we use is the following improved version of
(\ref{eq-18bis}), which we believe can be interesting in a more general
context.

%
%
\begin{theorem}
\label{th-pathflowsgeneral} Let $G\subset\Omega$ and assume that, for
some $T>0$ and for every initial condition $x$, $\mathbb{P}^x(X(T)\in
G)\geq\alpha$ with $\mathbb{P}^x$ denoting the law of the chain
starting at $x$. Assume further that for every $\eta,\eta'$ in $G$
there exists a path $\tilde\gamma(\eta,\eta')$ as above which stays in
$G$ and let
%
%
\begin{equation}
\label{eq-W} W(G):=\mathop{\max_{a,b\in G}}_{Q(a,b)\ne0}
\frac1{Q(a,b)}\mathop{\sum_{\eta,\eta'\in G\dvtx}}_{(a,b)\in
\tilde
\gamma(\eta,\eta')} \bigl|
\tilde\gamma \bigl(\eta,\eta' \bigr) \bigr|\pi(\eta)\pi \bigl(
\eta' \bigr).
\end{equation}
Then,
%
%
\begin{equation}
\label{eq-pathsgen} \gap^{-1}\leq\frac6{\alpha} \biggl(
\frac{T^2}{ p_{\min}}+\frac{W(G)}\alpha \biggr)
\end{equation}
with 
$ p_{\min}:=\min\{p(\sigma,\sigma')>0\dvtx \sigma,\sigma'\in
\Omega
\}$.
\end{theorem}

This is clearly an improvement provided that $\alpha$ is bounded away
from zero, that $W(G)\ll W(\Omega)$ and that $T$ is not too large (in
simple words, we need that with nonzero probability the chain enters
``quickly'' the good set $G$ where canonical paths work well).

In our SOS application, roughly speaking, we will choose $G$ to be the
set of configurations such that
$|\Lambda_L|^{-1}\sum_{x\in\Lambda_L}|\eta_x-H(L)|$ is upper bounded by
a constant. We will see that, irrespective of the starting
configuration, at time $T=\exp(O(L))$ the dynamics is in $G$ with
probability at least $\frac12$. On the other hand, a minor modification
of Proposition~\ref{prop-canpaths} will give $W(G)=\exp(O(\beta L))$.
Then, Theorem~\ref{th-pathflowsgeneral} allows us to improve the mixing
time upper bound to $\tmix=\exp(O(\beta L))$.

\section{Equilibrium results}\label{sec-eq}

%
%
\begin{theorem}\label{thm-equil-shape} Let $\Lambda\subset\mathbb
{Z}^2$ be a box of side-length $L$ and let $\beta\geq1$. Set $H =
\lfloor\frac1{4\beta} \log L\rfloor$. There exist some absolute
constants $C,K>0$ (with $K$ integer) such that for any integer $k\geq
K$,
%
%
\begin{equation}
\label{eq-equilib-i} \pi^0_\Lambda \bigl(\# \{v\dvtx
\eta_v\leq H - k \} > e^{-2\beta k}L^2 \bigr) \leq
\exp \bigl(-e^{\beta k}L \bigr),
\end{equation}
\begin{eqnarray}
\label{eq-equilib-ii} && \pi^0_\Lambda \bigl(\# \{v\dvtx
\eta_v\geq H + k \} > e^{-2\beta k}L^2 \bigr)
\nonumber
\\[-8pt]
\\[-8pt]
&&\qquad \leq\exp \bigl(-C e^{-2\beta k}L \bigl(1 \wedge e^{-2\beta k} L
\log^{-8} L \bigr) \bigr).
\nonumber
\end{eqnarray}
\end{theorem}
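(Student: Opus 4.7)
The proof relies on Peierls-type estimates using \emph{$h$-contours}: closed dual circuits $\gamma$ whose interior boundary sites have height $\geq h$ and whose exterior boundary sites have height $\leq h-1$. Since a site with $\eta_v\geq h$ must lie in the closed interior of some $h$-contour, the problem reduces to controlling the area enclosed by $(H+k)$-contours for the upper tail, and symmetrically the exterior of $(H-k+1)$-contours for the lower tail. For any fixed $h$-contour $\gamma$ the natural Peierls map $S_\gamma\colon\eta\mapsto\eta'$ that subtracts $1$ on the closed interior of $\gamma$ is injective and reduces the Hamiltonian by exactly $|\gamma|$. On the event $\cE(\gamma):=\{\eta_v\geq 1\text{ for every }v\text{ in the interior of }\gamma\}$ the image remains in $\Omega_{\Lambda,n^+}$, yielding the Peierls bound $\pi^0_\Lambda(\gamma\text{ is an $h$-contour of }\eta,\,\cE(\gamma))\leq e^{-\beta|\gamma|}$.

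For the upper tail \eqref{eq-equilib-ii} I would first show that on the complementary event $\cE(\gamma)^c$---some interior site is at the floor while the interior boundary sits at height $\geq H+k$---a nested Peierls estimate across the intermediate levels $1,\dots,H+k-1$ bounds the contribution by something of the form $A(\gamma)\,e^{-c\beta H}$, which is dominated by $e^{-\beta|\gamma|}$ for any reasonable $\gamma$ since $H\asymp(1/4\beta)\log L$. I would then sum over disjoint collections of $(H+k)$-contours of total area exceeding $e^{-2\beta k}L^2$ and invoke the isoperimetric inequality $|\gamma|\geq 4\sqrt{A(\gamma)}$ together with $\sum_i\sqrt{A_i}\geq\sqrt{\sum_i A_i}$, forcing the total perimeter to be at least $4\,e^{-\beta k}L$. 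At large $\beta$ the standard contour-counting bound absorbs the combinatorial entropy of such families in the $e^{-\beta|\gamma|}$ factor, producing the crude decay $\exp(-c\,e^{-\beta k}L)$. To strengthen this to the area-squared rate $\exp(-C\,e^{-4\beta k}L^2(\log L)^{-8})$ in the regime $e^{-2\beta k}L\gg(\log L)^8$ I would upgrade to a Chernoff bound on $\Xi:=\sum_\gamma A(\gamma)$ using a two-contour decoupling $\pi(\gamma_1,\gamma_2)\leq\pi(\gamma_1)\pi(\gamma_2)(1+o(1))$ valid once $\gamma_1$ and $\gamma_2$ are separated by more than $\mathrm{polylog}\,L$; the $(\log L)^{-8}$ factor encodes precisely the separation scale required for this decoupling.

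For the lower tail \eqref{eq-equilib-i} the heuristic is that too many low sites are entropically too expensive. I would compare $Z^0_\Lambda(A_k)$ to $Z^0_\Lambda$ by applying a global ``raise by $1$'' map $T\colon\eta\mapsto\eta+1$ to a typical $\eta\in A_k$: this shift leaves the bulk part of $\cH$ invariant and only increases the boundary contribution by $|\partial\Lambda|\asymp 4L$, costing a factor $e^{-4\beta L}$, while opening up an extra level of downward fluctuations at each bulk site. For a site at height $h$ the corresponding gain in the single-site partition is $\sim e^{-4\beta(h+1)}$, so summing over the $\geq e^{-2\beta k}L^2$ low sites (each at $h\leq H-k$, contributing a gain of order $e^{4\beta(k-1)}/L$ since $e^{-4\beta H}\asymp 1/L$) yields a cumulative entropic gain of order $e^{2\beta k}L$. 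For $k\geq K$ large enough this overwhelms the $4\beta L$ boundary cost, and quantifying the comparison in terms of partition functions gives the claimed $\exp(-e^{\beta k}L)$ bound on $\pi^0_\Lambda(A_k)$.

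The main obstacle will be the sharp upper-tail bound in the moderate-$k$ regime, where the crude Peierls/isoperimetric reasoning only delivers the perimeter-scale decay and one genuinely needs to exploit concentration for the total enclosed area. Establishing the required two-contour correlation estimate uniformly in $k$ and $L$, while handling the geometric subtlety that nearby contours can interact through the floor constraint, is what introduces the $(\log L)^{-8}$ correction; everything else amounts to careful bookkeeping of Peierls, isoperimetric and contour-counting contributions.
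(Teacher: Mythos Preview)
Your lower-tail argument is essentially the paper's: the map $U_A$ that raises the whole surface by $1$ and then sends a chosen subset $A$ of the low sites to height $0$ is exactly the ``global lift plus downward spikes'' you describe, and the bookkeeping of boundary cost $4\beta L$ versus entropic gain $\frac12 e^{-4\beta(h+1)}|A|$ gives~\eqref{eq-equilib-i} directly.

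The upper-tail sketch, however, has two genuine gaps.

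\textbf{Handling the floor.} Your bound $\pi(\sC_{\gamma,H+k},\,\cE(\gamma)^c)\le A(\gamma)\,e^{-c\beta H}$ contains no factor $e^{-\beta|\gamma|}$: the nested down-contours around a site at height $0$ do give a factor $e^{-4\beta(H+k)}$, but you cannot simultaneously apply $S_\gamma$ (the floor obstructs it), so the perimeter cost of $\gamma$ itself is lost. For $|\gamma|\gg\log L$ the bound $A(\gamma)e^{-c\beta H}$ is useless, and summing it over contours diverges. The paper does not split into $\cE(\gamma)$ and $\cE(\gamma)^c$; instead it writes $\pi(\sC_{\gamma,h})=\pi(\sC_{\gamma,h},\cE(\gamma))/\pi(\cE(\gamma)\mid\sC_{\gamma,h})$ and lower-bounds the denominator by FKG and the single-site estimate $\hat\pi(\eta_v\le 0)\le ce^{-4\beta h}$, yielding the clean inequality $\pi(\sC_{\gamma,h})\le\exp(-\beta|\gamma|+C_0|\Lambda_\gamma|e^{-4\beta h})$.

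\textbf{The concentration step.} Your proposed decoupling $\pi(\gamma_1,\gamma_2)\le(1+o(1))\pi(\gamma_1)\pi(\gamma_2)$ for $(H+k)$-contours separated by $\mathrm{polylog}\,L$ would require exponential decay of correlations for SOS \emph{with a floor}, which is not available---entropic repulsion is a genuinely long-range effect. The paper's mechanism is different and avoids any correlation input: it works at a fixed level $h=H+O(1)$ (not $H+k$), splits $h$-contours into short ($|\gamma|\le\log^2 L$) and long, and for the short ones conditions on the outermost collection $\sA_0$. By the domain Markov property the interiors of distinct $\gamma\in\sA_0$ are then \emph{exactly} independent, and inside each the expected number of sites at height $\ge h+k$ is $\le c e^{-4\beta k}|\Lambda_\gamma|$. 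Hoeffding--Azuma with increment bound $|\Lambda_\gamma|\le\log^4 L$ gives the $\exp(-c\,e^{-4\beta k}L^2\log^{-8}L)$ rate; this, not a separation scale, is the source of $\log^{-8}L$. Long $h$-contours are handled by the box isoperimetry $|\Lambda_\gamma|\le(L/4)|\gamma|$ (not the square-root version), which gives the perimeter bound $\exp(-ce^{-2\beta k}L)$; your claimed crude rate $\exp(-ce^{-\beta k}L)$ is not what emerges and in fact cannot be obtained by naive summation over families, since the entropy of many small contours is not absorbed.
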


(Notice that the bound on downward fluctuations improves with the size
of the deviation whereas the bound on upward fluctuations deteriorates
with the distance.)

Recall that $\pi_\Lambda^0$ has a floor at $0$ and a ceiling at height
$\log L\leq n^+\leq L$ (together with zero boundary conditions). It
will be convenient throughout this section to work in the setting of a
floor at 0 but no ceiling, where the corresponding measure
$\Pi_\Lambda^0$ is asymptotically equal to $\pi_\Lambda^0$.

%
%
\begin{lemma}
\label{lem-no-ceil} There is an absolute constant $c>0$ such that for
any $\beta\geq1$ and any subset of configurations $A \subseteq
\{0,\ldots,n^+\}^\Lambda$,
\[
\Pi^0_\Lambda(A) \leq\pi^0_\Lambda(A)
\leq \bigl(1 + c L^2 e^{-2\beta n^+} \bigr) \Pi^0_\Lambda(A).
\]
\end{lemma}

The above lemma, which will be proved further on in this section,
entitles us to derive results on $\pi_\Lambda^0$ from $\Pi_\Lambda ^0$
at an asymptotically negligible cost.

The following notion of a \textit{contour} and that of an
\textit{$h$-contour}, a level line at height~$h$, play a crucial role
in our proofs.

%
%
\begin{definition}\label{contourdef}
We let ${\mathbb{Z}^2}^*$ be the dual lattice of $\mathbb{Z}^2$ and we
call a \textit{bond} any segment joining two neighboring sites in
${\mathbb{Z}^2}^*$. Two sites $x,y$ in $\mathbb{Z}^2$ are said to be
\textit{separated by a bond $e$} if their distance\vspace*{-1pt} (in
$\mathbb{R}^2$) from $e$ is $\frac12$. A pair of orthogonal bonds which
meet in a site $x^*\in{\mathbb{Z}^2}^*$ is said to be a \textit{linked
pair of bonds} if both bonds are on the same side of the forty-five
degrees line across $x^*$. A \textit{geometric contour} (for short a
contour in the sequel) is a sequence $e_0,\ldots,e_n$ of bonds such
that:
\begin{enumerate}[(3)]
\item[(1)] $e_i\ne e_j$ for $i\ne j$, except for $i=0$ and $j=n$
where $e_0=e_n$;

\item[(2)] for every $i$, $e_i$ and $e_{i+1}$ have a common vertex
in ${\mathbb{Z}^2}^*$;

\item[(3)] if $e_i$, $e_{i+1}$, $e_j$, $e_{j+1}$ intersect at some
$x^*\in {\mathbb{Z}^2}^*$, then $e_i,e_{i+1}$ and $e_j,e_{j+1}$
are linked pairs of bonds.
\end{enumerate}
We denote the length of a contour $\gamma$ by $|\gamma|$, its interior
(the sites in $\mathbb{Z}^2$ it surrounds) by $\Lambda_\gamma$ and its
interior area (the\vadjust{\goodbreak} number of such sites) by $|\Lambda_\gamma|$.
Moreover, we let $\Delta_{\gamma}$ be the set of sites in $\mathbb
{Z}^2$ such that either their distance (in $\mathbb{R}^2$) from
$\gamma$ is $\frac12$, or their distance from the set of vertices in
${\mathbb{Z}^2}^*$ where two nonlinked bonds of $\gamma$ meet equals
$1/\sqrt2$. Finally, we let
$\Delta^+_\gamma=\Delta_\gamma\cap\Lambda_\gamma$ and $\Delta
^-_\gamma= \Delta_\gamma\setminus\Delta^+_\gamma$.
\end{definition}

%
%
\begin{definition}
Given a contour $\gamma$ we say that $\gamma$ is an
\textit{$h$-contour} for the configuration $\eta$ if
\[
\eta\restriction_{\Delta^-_\gamma}\leq h-1,\qquad\eta\restriction _{\Delta^+_\gamma}
\geq h.
\]
We will say that $\gamma$ is a contour for the configuration $\eta$ if
there exists $h$ such that $\gamma$~is a $h$-contour for $\eta$.
Finally, $\mathscr{C}_{\gamma,h}$ will denote the event that $\gamma$
is an $h$-contour.
\end{definition}

To illustrate the above definitions with a simple example, consider the
elementary contour given by the square of side $1$ surrounding a site
$x\in\mathbb{Z}^2$. In this case, $\gamma$ is an $h$-contour iff $\eta
_{x}\geq h$ and $\eta_y\leq h-1$ for all $y\in\{x\pm e_1, x\pm e_2, x+
e_1+e_2,x-e_1-e_2\}$. In general, $\Delta^+_\gamma$ (resp.,
$\Delta^-_\gamma$) is the set of $x\in\Lambda_\gamma$ (resp.,
$x\in\Lambda_\gamma^c$) either at distance 1 from $\Lambda_\gamma^c$
(resp., $\Lambda_\gamma$) or at distance $\sqrt2$ from a vertex
$y\in\Lambda_\gamma^c$ (resp., $y\in\Lambda_\gamma$) in the south--west
or north--east direction.

%
%
\begin{remark}
\label{rem:splitting-rules}
As the reader may have noticed the definition of an $h$-contour is
asymmetric in the sense that we require the minimal height of the
surface at the inner boundary of $\gamma$, $\Delta^+_\gamma$, to be
larger than the maximum height at the external boundary. In a sense,
this definition covers \textit{upward fluctuations} of the surface. Of
course one could provide the reverse definition covering
\textit{downward fluctuations}. In the sequel, the latter is not really
needed thanks to monotonicity and symmetry arguments. We also observe
that, contrary to what happens in, for example, Ising models, a
geometric contour $\gamma$ could be at the same time a $h$-contour and
a $h'$-contour with $h\neq h'$. More generally two geometric contours
$\gamma,\gamma'$ could be contours for the same surface with different
height parameters even if $\gamma\cap\gamma'\ne\varnothing$ (but one of
them must be contained in the other).
\end{remark}

The following estimates play a key role in the proof of
Theorem~\ref{thm-equil-shape}.

%
%
\begin{proposition}\label{p-starBound}
There exists an absolute constant $C_0>0$ such that for all $\beta\geq
1$ and $h\geq1$,
%
%
\begin{equation}
\label{e-contourFloorBound} \pi^0_\Lambda(
\mathscr{C}_{\gamma,h} ) \leq\exp \bigl(-\beta|\gamma|+C_0 |
\Lambda_{\gamma}|e^{-4\beta h} \bigr).
\end{equation}
Moreover, for any family of $h$-contours $\{(\gamma_s,h_s)\}_{s\in
\mathscr{S}}$ such that for all $i\geq1$
\[
\mathop{\bigcup_{s\in\mathscr{S}}}_{h_s=i+1}
\Lambda_{\gamma_s} \subseteq\mathop{\bigcup_{s\in\mathscr{S}}}_{h_s=i}
\Lambda_{\gamma_s}
\]
and $\Lambda_{\gamma_s}\cap\Lambda_{\gamma_{s'}} =\varnothing$ when
$h_s=h_{s'}$, $s\neq s'$,we have
%
%
\begin{equation}
\label{e-contourFloorConditional} \pi^0_\Lambda \biggl(
\bigcap_{s\in\mathscr{S}}\mathscr{C}_{\gamma
_s,h_s} \biggr) \leq
\exp \biggl( \sum_{s\in\mathscr{S}} \bigl(-\beta|
\gamma_s|+C_0 |\Lambda_{\gamma_s}|e^{-4\beta h_s}
\bigr) \biggr).
\end{equation}
\end{proposition}

As a step towards the proof of the above proposition, we consider the
setting of no floor and no ceiling, where the picture is simpler as
there is no entropic repulsion.

%
%
\begin{lemma}\label{l:contourBound}
For any $h$-contour $\gamma$ in any domain $\Lambda$ with any boundary
condition $\xi$ we have
\[
\hat{\pi} {}^\xi_\Lambda( \mathscr{C}_{\gamma,h} ) \leq
\exp \bigl(-\beta|\gamma| \bigr).
\]
Moreover, if $h'<h$ and $\gamma,\gamma'$ are contours with
$\Lambda_\gamma\subseteq\Lambda_{\gamma'}$ then
%
%
\begin{equation}
\label{e-contourConditional} \hat{\pi} {}^\xi_\Lambda (
\mathscr{C}_{\gamma,h} \mid\mathscr{C}_{\gamma
',h'} ) \leq\exp \bigl(-\beta|
\gamma| \bigr).
\end{equation}
\end{lemma}

\begin{pf}
Define the map $T=T_\gamma\dvtx\mathbb{Z}^\Lambda\to\mathbb
{Z}^\Lambda
$ by
%
%
\begin{equation}
\label{e-TDefn} (T \eta)_v= \cases{ \eta_v-1, &\quad
$v \in\Lambda_\gamma$, \vspace*{2pt}
\cr
\eta_v, &\quad
otherwise.}
\end{equation}
If $\eta$ has an $h$-contour at $\gamma$, then the difference along
every edge in $\mathbb{Z}^2$ crossing $\gamma$ decreases by 1 so
$\hat{\pi}{}^\xi_\Lambda(T\eta)=e^{\beta|\gamma|}\hat{\pi}{}^\xi
_\Lambda(\eta)$. Since $T$ is a bijection it follows that
\[
\sum_{\mathscr{C}_{\gamma,h}} \hat{\pi} {}^\xi_\Lambda(
\eta) = e^{-\beta
|\gamma|}\sum_{ T^{-1}(\mathscr{C}_{\gamma,h})} \hat{ \pi}
{}^\xi_\Lambda(T \eta)\leq e^{-\beta|\gamma|}.
\]
Equation~(\ref{e-contourConditional}) follows from the same argument by
noting that if $\eta\in\mathscr{C}_{\gamma,h} \cap\mathscr
{C}_{\gamma',h'}$ then $T_\gamma\eta$ remains in
$\mathscr{C}_{\gamma',h'}$. This completes the proof.
\end{pf}

%
%
\begin{remark}
\label{rem:nested-contours} In the context of considering the interior
of an $h$-contour $\gamma$ for possibly nested contours [such as the
ones featured in equation~(\ref{e-contourConditional})], a useful
observation is that
\[
\pi_\Lambda^0 (\eta\restriction_{\Lambda_\gamma}\in\cdot\mid
\mathscr{C}_{\gamma,h} ) = \pi_{\Lambda_\gamma}^\xi ( \cdot\mid
\eta\restriction_{\Delta^+_\gamma} \geq h )
\]
for any boundary condition $\xi$, that is, at most $h$ all along
$\Delta^-_\gamma$. This follows from the fact that conditioning on any
fixed $\xi\leq h$ would contribute an equal pre-factor to all
configurations thanks to having $\eta\restriction_{\Delta^+_{\gamma}}
\geq h$, and as this includes all $\xi$'s with
$\eta\restriction_{\Delta^-_\gamma} \leq h-1$ this further includes
$\mathscr{C}_{\gamma,h}$. Moreover, the same holds when conditioning on
$\mathscr{C}_{\gamma,h} \cap E$ (instead of just $\mathscr
{C}_{\gamma,h}$) for an arbitrary event $E$ which is only a function of
the configuration on $(\Lambda_\gamma)^c$.
\end{remark}

(Note that the above remark similarly applies to $\Pi$ and $\hat{\pi}$
by the same argument.)

A Peierls-type argument will transform the above lemma into the
following bound on upward (downward) fluctuations in the no floor, no
ceiling setting.

%
%
\begin{proposition}\label{p-heightBounds}
There exists an absolute constant $c>0$ such that for any $\beta\geq
1$, domain $\Lambda$, site $v\in\Lambda$ and height $h\geq0$,
\[
\tfrac12 e^{-4\beta h} \leq\hat{\pi} {}^0_\Lambda(\eta
_v \geq h ) \leq ce^{-4\beta h}.
\]
\end{proposition}

\begin{pf}
Define the map $S\dvtx\mathbb{Z}^\Lambda\to\mathbb{Z}^\Lambda$ by $(S
\eta)_u=\eta_u$ for $u\neq v$ and
\[
(S \eta)_v= %
\cases{ \eta_v+h, & \quad$
\eta_v \geq0 $, \vspace*{2pt}
\cr
\eta_v-h, & \quad$
\eta_v < 0 $.} %
\]
Observe that $|(S \eta)_v| \geq h$ and that since $S$ changes the
Hamiltonian by at most $4h$,
\[
\hat{\pi} {}^0_\Lambda(S \eta) \geq e^{-4\beta h} \hat{
\pi} {}^0_\Lambda(\eta).
\]
Moreover, as $S$ is injective, summing over $\eta$ we have that
\[
\hat{\pi} {}^0_\Lambda \bigl(|\eta_v| \geq h \bigr) =
\sum_{\eta\in\mathbb{Z}^\Lambda} \hat{\pi} {}^0_\Lambda(S
\eta) \geq e^{-4\beta h} \sum_{\eta\in\mathbb{Z}^\Lambda} \hat{\pi }
{}^0_\Lambda( \eta) = e^{-4\beta h}.
\]
Since by symmetry $\hat{\pi}{}^0_\Lambda(\eta_v \geq h ) =
\hat{\pi}{}^0_\Lambda(\eta_v \leq-h )$ the lower bound follows.

To get the upper bound, define a set of nested contours surrounding $v$
as
\[
\mathcal{A}(h,v)= \bigl\{(\gamma_1,\ldots,\gamma_h)
\dvtx v \in\Lambda_{\gamma
_h}\mbox{ and }\Lambda_{\gamma_{i+1}} \subseteq
\Lambda_{\gamma_i}\mbox{ for all }1\leq i \leq h-1 \bigr\}
\]
and observe that, if $\eta$ is such that $\eta_v \geq h$, then
necessarily there exists
$(\gamma_1,\ldots,\gamma_h)\in\mathcal{A}(h,v)$ such that
$\eta\in\bigcap_{1\leq i \leq h}\mathscr{C}_{\gamma_i,i}$.

Applying Lemma~\ref{l:contourBound} iteratively (while bearing
Remark~\ref{rem:nested-contours} in mind), we now obtain that for every
$(\gamma_1,\ldots,\gamma_h)\in\mathcal{A}(h,v)$,
%
%
\begin{equation}
\label{e-nestedCountourBound} \hat{\pi} {}^\xi_\Lambda
\biggl( \bigcap_{1\leq i \leq h} \mathscr{C}_{\gamma_i,i}
\biggr) \leq e^{-\beta\sum_{i=1}^h
|\gamma_i|}.
\end{equation}
Simple counting gives that the number of contours of length $n$
starting from a vertex is at most $R_n$, the number of self avoiding
walks of length $n$. If such a path surrounds $v$, then it must cross
the horizontal line containing $v$ to its right within distance $n$ so
the number of $\gamma$ with $|\gamma|=n$ and $v\in\Lambda_{\gamma }$ is
at most $n R_n$ (with room to spare). Hence
\[
\mathop{\sum_{\gamma\dvtx v\in\Lambda_{\gamma}}}_{|\Lambda
_{\gamma}|>2}
e^{-\beta|\gamma|+6\beta} \leq\sum_{n=8}^\infty n
R_n e^{-\beta n +6\beta},
\]
which is uniformly bounded in $\beta$ for any $\beta\geq1$ since the
connective constant $\mu_2 = \lim_{n\to\infty} R_n^{1/n}$ is known to
satisfy $\mu_2 <2.68 < e$. Hence, for some large enough $M$,
independent of $\beta$,
%
%
\begin{equation}
\label{e-MBound} \mathop{\sum_{\gamma\dvtx v\in\Lambda_{\gamma
}}}_{|\Lambda
_{\gamma}|> M}
e^{-\beta|\gamma|} \leq e^{-6\beta}.
\end{equation}
Now define a collection of nested contours of area at least 2 and at
most $M$ as
\[
\mathcal{A}_M(h,v)= \bigl\{(\gamma_1,\ldots,
\gamma_h)\in\mathcal{A}(h,v)\dvtx2\leq|\Lambda_{\gamma_i}| \leq M
\mbox{ for all }1\leq i \leq h \bigr\}.
\]
We note that
%
%
\begin{eqnarray}
\label{e-AMSizeBound} \bigl|\mathcal{A}_M(h,v) \bigr| &\leq& \bigl|
\mathcal{A}_M(1,v) \bigr|^{M-1} \pmatrix{h + M - 2
\cr
\vspace*{2pt} M-2}
\nonumber
\\[-8pt]
\\[-8pt]
&\leq& \bigl|\mathcal{A}_M(1,v) \bigr|^{M-1}(h+M)^{M-2}
\nonumber
\end{eqnarray}
since, examining the way $|\Lambda_{\gamma_i}|$ decreases, there are at
most $M-2$ transitions of $|\Lambda_{\gamma_i}|<
|\Lambda_{\gamma_{i-1}}|$ and in each case the number of possible
$\gamma_i$ is at most $|\mathcal{A}_M(1,v)|$ with much room to spare.

For any $(\gamma_1,\ldots,\gamma_h)\in\mathcal{A}(h,v)$, we can find
$0\leq k \leq l \leq h$ such that $|\Lambda_{\gamma_i}|>M$ for $1\leq i
\leq k$, that $(\gamma_{k+1},\ldots,\gamma_l)\in\mathcal{A}_M(l-k,v)$
and $|\Lambda_{\gamma_i}|=1$ for $l<i\leq h$. Then
\begin{eqnarray*}
&& \sum_{(\gamma_1,\ldots,\gamma_h)\in\mathcal{A}(h,v)} e^{-\beta
\sum_{i=1}^h |\gamma_i|}
\\
&&\qquad = \sum_{0\leq k \leq l\leq h} \mathop{\sum
_{(\gamma_1,\ldots,\gamma
_k)}}_{|\Lambda_{\gamma_k}|>M} \sum_{(\gamma_{k+1},\ldots,\gamma_l)\in\mathcal{A}_M(l-k,v)}
e^{-\beta\sum_{i=1}^{l} |\gamma_i| - 4\beta(h-l)}
\\
&&\qquad \leq\sum_{0\leq k \leq l\leq h} \bigl|\mathcal{A}_M(l-k,v)\bigr| e^{-6\beta l -
4\beta(h-l)}
\\
&&\qquad \leq e^{-4\beta h}\sum_{0\leq k \leq l\leq h} \bigl|
\mathcal{A}_M(1,v) \bigr|^{M-1} (l-k+M)^{M-2}
e^{-2\beta l}
\\
&&\qquad \leq c e^{-4\beta h},
\end{eqnarray*}
where the first equality holds since $|\gamma_i|=4$ when
$|\Lambda_{\gamma_i}|=1$, the inequality in the second line is by
equation~(\ref{e-MBound}) and the fact that every contour with
$|\Lambda_{\gamma}|\geq2$ has $|\gamma|\geq6$, and where the transition
in the third line is by~(\ref{e-AMSizeBound}). Combining with
equation~(\ref{e-nestedCountourBound}) completes the proof.
\end{pf}

Proposition~\ref{p-heightBounds} allows us to readily infer
Lemma~\ref{lem-no-ceil}.

\begin{pf*}{Proof of Lemma~\ref{lem-no-ceil}} One has
\[
\pi^0_\Lambda(A) = \Pi^0_\Lambda(A)
\frac{\Xi_\Lambda
^0}{Z^0_\Lambda},
\]
where $\Xi_\Lambda^0$ denotes the partition function corresponding to
$\Pi_\Lambda^0$. The fact that $ \Pi^0_\Lambda(A) \leq\pi
^0_\Lambda
(A) $ follows immediately from $ \Xi_\Lambda^0\geq Z^0_\Lambda$. To
show that $\pi^0_\Lambda(A) \leq(1 + c L^2 e^{-2\beta n^+} )
\Pi^0_\Lambda(A)$, observe that
\[
\frac{Z^0_\Lambda}{\Xi_\Lambda^0}=\Pi^0_\Lambda \bigl(\eta\leq n^+ \bigr),
\]
so that
\[
\pi^0_\Lambda(A) = \frac{\Pi^0_\Lambda(A)}{1-
\Pi^0_\Lambda(\bigcup_{v\in\Lambda} \{\eta_v > n^+\} )} \leq \frac{\Pi^0_\Lambda(A)}{1-\sum_{v\in\Lambda}
\Pi^0_\Lambda( \{\eta_v > n^+\} )}.
\]
Thanks to monotonicity and Proposition~\ref{p-heightBounds}, for any $v
\in\Lambda$ we have
\begin{eqnarray*}
\Pi^0_\Lambda \bigl(\eta_v > n^+ \bigr) &\leq&
\Pi^{n^+/2}_\Lambda \bigl(\eta_v > n^+ \bigr) \leq
\frac{\hat{\pi}{}^0_\Lambda(\eta_v > n^+/2)}{
\hat{\pi}{}^0_\Lambda(\eta\geq-n^+/2) } \leq\frac{c e^{-2\beta
n^+}}{1-c |\Lambda| e^{-2\beta n^+}},
\end{eqnarray*}
(where we took $n^+/2$ to be an integer to simplify the exposition) as
required.
\end{pf*}

Having bounded the probability of exceeding a certain height in the no
floor setting, we can now quantify the entropic repulsion effect and
derive an estimate on $\pi^0_\Lambda(\mathscr{C}_{\gamma,h})$.

\begin{pf*}{Proof of Proposition~\ref{p-starBound}}
Thanks to Lemma~\ref{lem-no-ceil}, it suffices to prove the analogous
estimates for the measure $\Pi$ with no ceiling.

By\vspace*{-2pt} Remark~\ref{rem:nested-contours}, the conditional
distribution of $\eta\restriction_{\Lambda_{\gamma}}$ given
$\mathscr{C}_{\gamma,h}$ is equal to
$\Pi^h_{\Lambda_{\gamma}}(\cdot\mid
\eta\restriction_{\Delta^+_\gamma}\geq h)$ which stochastically
dominates $\Pi^h_{\Lambda_{\gamma}}$. Hence,
\begin{eqnarray*}
\Pi^0_\Lambda (\eta\restriction_{\Lambda_{\gamma}} > 0\mid
\mathscr{C}_{\gamma,h} )&\geq&\Pi^h_{\Lambda_{\gamma}}(\eta
\restriction_{\Lambda_{\gamma}} > 0) \geq\prod_{v\in\Lambda
_{\gamma}}
\Pi^h_{\Lambda_{\gamma}}(\eta_v > 0)
\\
&\geq&\prod_{v\in\Lambda_{\gamma}} \hat{\pi} {}^h_{\Lambda_{\gamma
}}(
\eta_v > 0) \geq \biggl(\frac12 \vee \bigl(1-ce^{-4\beta h}
\bigr) \biggr)^{|\Lambda_{\gamma}|},
\end{eqnarray*}
where the second inequality follows by the FKG inequality, the third
follows by monotonicity of removing the floor and the final inequality
by symmetry and Proposition~\ref{p-heightBounds}. Therefore,
%
%
\begin{equation}
\label{e-FKGLowerBound} \Pi^0_\Lambda(\eta
\restriction_{\Lambda_{\gamma}} > 0\mid\mathscr{C} _{\gamma,h}) \geq\exp
\bigl(-2c |\Lambda_{\gamma}| e^{-4\beta h} \bigr),
\end{equation}
since $\frac12 \vee(1-x) \geq\exp(-2x)$ for $x\geq0$. With
$T_\gamma$
defined as in~(\ref{e-TDefn}), on the event that $\gamma$ is an
$h$-contour and $\eta(\Lambda_{\gamma}) > 0$ we have $T \eta\geq0$ and
$\Pi^0_\Lambda(T \eta) = e^{\beta|\gamma|}\Pi^0_\Lambda(\eta)$. It
follows from this bijection that
%
%
\begin{eqnarray}
\label{eq-FKGLowerBoundbis} 1 &\geq&\mathop{\sum_{\eta\dvtx \eta
\restriction_{\Lambda_{\gamma
}} > 0,}}_{\mathscr{C}_{\gamma,h}}
\Pi^0_\Lambda(T \eta) = e^{\beta
|\gamma|}\Pi^0_\Lambda(
\eta\restriction_{\Lambda_{\gamma
}} > 0, \mathscr{C}_{\gamma,h} )
\nonumber
\\[-8pt]
\\[-8pt]
&\geq&\exp \bigl(\beta|\gamma| -2c |\Lambda_{\gamma}| e^{-4\beta
h}
\bigr) \Pi^0_\Lambda(\mathscr{C}_{\gamma,h})
\nonumber
\end{eqnarray}
with the second inequality by~(\ref{e-FKGLowerBound}). Rearranging this
establishes~(\ref{e-contourFloorBound}). To
obtain~(\ref{e-contourFloorConditional}) note first that the proof
applies unchanged if $h_s=h$ for all $s$, that is, when the family of
disjoint contours is of the form $\{(\gamma_s,h)\}_{s\in\mathscr{S}}$,
in this case yielding
\[
\Pi^0_\Lambda \biggl(\bigcap_{s\in\mathscr{S}}
\mathscr{C}_{\gamma
_s,h} \biggr) \leq\exp \biggl( \sum
_{s\in\mathscr{S}} \bigl(-\beta|\gamma_s|+2c |
\Lambda_{\gamma_s}|e^{-4\beta h} \bigr) \biggr).
\]
Now take a general family $\{(\gamma_s,h_s)\}_{s\in\mathscr{S}}$
satisfying the hypothesis of the lemma. We proceed by induction over
the levels of the contours from top to bottom. If $h_+=\max_s h_s$,
then conditioning on $\bigcap_{s\in\mathscr{S}\dvtx h_s<h_+}
\mathscr
{C}_{\gamma_s,h_s}$ does not affect the conditional distribution of
$\eta(\bigcup_{s\in\mathscr{S}\dvtx h_s=h_+} \Lambda_\gamma)$
given\break
$\bigcap_{s\in\mathscr{S}\dvtx h_s=h_+} \mathscr{C}_{\gamma
_s,h_+}$ (as
explained in Remark~\ref{rem:nested-contours}). Moreover, given
that\break
$\bigcap_{s\in\mathscr{S}} \mathscr{C}_{\gamma_s,h_s}$ holds then
$T_{h_+} \eta\in\bigcap_{s\in\mathscr{S}\dvtx h_s<h_+}
\mathscr{C}_{\gamma_s,h}$, where $T_{h_+}$ denotes the composition of
the $T_{\gamma_s}$'s for all $s$ such that $h_s=h_+$, that is, reducing
the height of every site in $\bigcup_{s\in\mathscr{S}\dvtx h_s=h_+}
\Lambda_{\gamma_s}$ by 1. This implies that
\begin{eqnarray*}
&& \Pi^0_\Lambda \biggl(\bigcap
_{s\in\mathscr{S}\dvtx h_s=h_+} \mathscr{C}_{\gamma _s,h_s} \Big| \bigcap
_{s\in\mathscr{S}\dvtx
h_s<h_+} \mathscr{C}_{\gamma _s,h_s} \biggr)
\\
&&\qquad \leq\exp \biggl( \sum_{s\in\mathscr{S}\dvtx h_s=h_+} \bigl(-\beta|
\gamma_s|+2c | \Lambda_{\gamma_s}|e^{-4\beta h_+} \bigr)
\biggr).
\end{eqnarray*}
The proof is completed by induction.
\end{pf*}

\begin{pf*}{Proof of Theorem~\ref{thm-equil-shape}, equation~(\ref{eq-equilib-i})}
It suffices to prove the corresponding bounds for $\Pi$. Set $h=H -k$
and $\mathcal{S}_h(\eta)=\{v\in\Lambda\dvtx\eta_v=h\}$. For each
$A\subseteq\mathcal{S}_h(\eta)$, we can define $U_A\dvtx
\Omega\to\Omega$ given by
\[
(U_A \eta)_v = %
\cases{
\eta_v+1, &\quad$v \notin A$, \vspace*{2pt}
\cr
0, &\quad$ v\in A$.}
\]
To measure the effect of $U_A$ on the Hamiltonian, observe that $U_A$
is equivalent to incrementing each height by $1$ followed by decreasing
the sites in $A$ by $h+1$. As such, this operation increases the
Hamiltonian by at most $|\partial\Lambda| + 4(h+1)|A|$ and so
altogether
\[
\Pi^0_\Lambda(U_A \eta) \geq\exp \bigl(-4\beta
L - 4\beta(h+1)|A| \bigr)\Pi^0_\Lambda(\eta).
\]
Hence,
\begin{eqnarray*}
&& \sum_{A\subseteq\mathcal{S}_h(\eta)} \Pi^0_\Lambda(U_A
\eta)
\\
&&\qquad \geq\exp(-4\beta L) \bigl(1+e^{-4\beta(h+1)} \bigr)^{|\mathcal{S}_h(\eta
)|}
\Pi^0_\Lambda(\eta)
\\
&&\qquad \geq\exp \biggl(-4\beta L+\frac12 e^{-4\beta(h+1)} \bigl|
\mathcal{S}_h( \eta) \bigr| \biggr)\Pi^0_\Lambda(\eta),
\end{eqnarray*}
since $e^{-4\beta(h+1)}\leq1$ and $(1+x)\geq e^{x/2}$ for $0\leq x\leq
1$. By construction, we have $U_A \eta\neq U_{A'} \eta$ for any $A \neq
A'$ with $A,A'\subseteq\mathcal{S}_h(\eta)$. In addition, if
$A\subseteq\mathcal{S}_h(\eta)$ and $A'\subseteq\mathcal{S}_h(\eta')$
for some $\eta\neq\eta'$ then $U_A \eta\neq U_{A'} \eta'$ (thanks to
the fact that one can recover $A$ from $U_A \eta$---the sites at level
0---then proceed to recover $\eta$). We can therefore conclude that
\begin{eqnarray*}
1 &\geq&\sum_{\eta\dvtx |\mathcal{S}_h(\eta)|\geq e^{-2\beta k}
L^2} \sum
_{A\subseteq\mathcal{S}_h(\eta)} \Pi^0_\Lambda(U_A
\eta)
\\
&\geq&\exp \biggl(-4\beta L+\frac12 e^{-4\beta(h+1)} e^{-2\beta k}
L^2 \biggr)\Pi^0_\Lambda \bigl( \bigl|
\mathcal{S}_h(\eta) \bigr|\geq e^{-2\beta k} L^2 \bigr)
\end{eqnarray*}
and so, for $k\geq1$
\begin{eqnarray*}
\Pi^0_\Lambda \bigl( \bigl|\mathcal{S}_h(\eta) \bigr|\geq
e^{-2\beta k} L^2 \bigr) &\leq& \exp \biggl(4\beta L - \frac12
e^{2\beta k-8\beta} L \biggr)
\\
&\leq&\frac12 \exp \bigl( - e^{\beta k} L \bigr),
\end{eqnarray*}
where the last inequality holds for any $k\geq12$. A union bound over
all $k \geq12$ now holds at the cost of increasing the pre-factor of
$1/2$ to $1$, as desired.
\end{pf*}

\begin{pf*}{Proof of Theorem~\ref{thm-equil-shape}, equation~(\ref{eq-equilib-ii})}
As above, we prove the corresponding
bounds for $\Pi$ and the result for $\pi$ will follow from
Lemma~\ref{lem-no-ceil}. Let $\mu_2 < 2.68$ be the connective constant
in $\mathbb{Z}^2$ and set
\[
h = H + \biggl\lceil\frac1{4\beta}\log \biggl(\frac{C_0}{1-\log
\mu_2} \biggr) \biggr
\rceil,
\]
where $C_0>0$ is the absolute constant from
Proposition~\ref{p-starBound}. By the isoperimetric inequality in
$\mathbb{Z}^2$, we have $|\Lambda_\gamma|\leq(L/4)|\gamma|$ for any
contour $\gamma$ in an $L\times L$ box $\Lambda$. Plugging these
in~(\ref{e-contourFloorBound}) gives
%
%
\begin{equation}
\label{eq-h-cont} \Pi^0_\Lambda(\mathscr{C}_{\gamma,h}
) \leq\exp \bigl(-\beta|\gamma| + C_0 (L/4)|\gamma| e^{-4\beta h}
\bigr) 
\leq\exp \bigl(-\theta|\gamma| \bigr),
\end{equation}
where
\[
\theta= \beta- \tfrac14(1-\log\mu_2) \geq1 - \tfrac14(1-\log\mu
_2) > \log\mu_2
\]
by our hypothesis that $\beta\geq1$.

Now define the random set $\mathscr{A}$ by
\[
\mathscr{A}= \mathscr{A}(\eta) = \bigl\{ \gamma\dvtx \gamma\mbox{ is an }h
\mbox{-contour of }\eta\mbox{ of length }|\gamma|\leq\log^2 L \bigr\}
\]
and let $\mathscr{A}_0$ be the result of omitting nested contours from
$\mathscr{A}$:
\[
\mathscr{A}_0 = \mathscr{A}_0(\eta) = \mathscr{A}
\setminus\{ \psi\in\mathscr{A}\dvtx\Lambda_\psi\subsetneq
\Lambda_\gamma\mbox{ for some }\gamma\in\mathscr{A} \}.
\]
For any collection of contours $A$, let also
\[
E_A = \biggl\{ \biggl|\bigcup_{\gamma\in A} \{ v
\in\Lambda_\gamma\dvtx\eta_v\geq h+k \} \biggr| >
\frac12e^{-2\beta
k}L^2 \biggr\}
\]
and observe that $E_\mathscr{A}=E_{\mathscr{A}_0}$ since $\bigcup\{
\Lambda_\gamma\dvtx\gamma\in\mathscr{A}\} =
\bigcup\{\Lambda_\gamma\dvtx\gamma\in\mathscr{A}_0\}$.
We thus have
%
%
\begin{eqnarray}
\label{eq-high-short-cont} \qquad\quad\Pi^0_\Lambda(E_{\mathscr{A}}
) &=& \sum_{A_0} \Pi^0_\Lambda
(E_{A_0} \mid\mathscr{A}_0 =A_0 )
\Pi^0_\Lambda( \mathscr{A} _0=A_0)
\nonumber
\\[-8pt]
\\[-8pt]
&=& \sum_{A_0 } \Pi^0_\Lambda
\biggl(\sum_{\gamma\in A_0} \mathscr{X}_\gamma>
\frac12e^{-2\beta k}L^2 | \mathscr{A}_0
=A_0 \biggr)\Pi^0_\Lambda(\mathscr{A}_0=A_0),
\nonumber
\end{eqnarray}
where
\[
\mathscr{X}_\gamma= \sum_{v \in\Lambda_\gamma}
\mathbf{1}_{\eta
_v\geq h+k}.
\]
Conditioned on $\mathscr{A}_0=A_0$, monotonicity enables us to increase
the values along $\Delta^- _\gamma$ for every $\gamma\in A_0$ to $h-1$
while possibly only increasing the probability of the event $E_{A_0}$,
and by doing so the variables $\{\mathscr{X}_\gamma\dvtx \gamma\in
\mathscr{A}_0\}$ become mutually independent.

Fix $\gamma\in A_0$. If $\eta_v \geq h + k$ for some $v\in
\Lambda_\gamma$ this gives rise to a sequence of nested $j$-contours
for $j=h+1,\ldots,h+k$ surrounding $v$, and by
Proposition~\ref{p-starBound} the probability for a given fixed such
sequence $\psi_1,\ldots,\psi_k$ is at most
\[
\exp \biggl(-\beta\sum_j \bigl(|
\psi_j| + C_0 |\Lambda_{\psi_j}| e^{-4\beta(h+j)}
\bigr) \biggr).
\]
However, the fact that $\sum_j|\Lambda_{\psi_j}| e^{-4\beta(h+j)} =
O(L^{-1}\log^4 L)$ shows that the area term in this estimate is
negligible, hence the same argument used for proving the upper bound of
Proposition~\ref{p-heightBounds} (in the no floor setting) yields that,
for some absolute $c>0$ and every $v\in\Lambda_\gamma$,
\[
\Pi^0_{\Lambda} (\eta_v \geq h + k\mid
\mathscr{C}_{\gamma,h} ) \leq c \exp(-4\beta k).
\]
In particular,
\[
\mathbb{E}_{\Pi^0_{\Lambda}( \cdot\mid\mathscr{C}_{\gamma,h})} [ \mathscr{X} _\gamma] \leq|
\Lambda_\gamma| c \exp(-4\beta k)
\]
and so
\[
\mathbb{E}_{\Pi^0_{\Lambda}( \cdot\mid\bigcap_{\gamma\in
A_0}\mathscr{C}
_{\gamma,h})} \biggl[\sum_{\gamma\in A_0}
\mathscr{X}_\gamma \biggr] \leq c \exp(-4\beta k) \sum
_{\gamma\in A_0} |\Lambda_\gamma| \leq c \exp(-4\beta k)
L^2.
\]
The variable $\mathscr{Y}= \sum_{\gamma\in A_0} \mathscr{X}_\gamma $ is
therefore a sum of $|A_0| \leq L^2 $ independent variables, each of
which respects the bound $|\mathscr{X}_\gamma| \leq|\Lambda_\gamma|
\leq\log^4 L$ with probability~1. Since $\beta\geq1$, for any $k
\geq\frac12\log(4c)$ we have $\mathbb{E}_{\Pi^0_{\Lambda}( \cdot
\mid\bigcap_{\gamma\in A_0}\mathscr{C}_{\gamma,h})} [ \mathscr{Y}
]\leq\frac14 e^{-2\beta k} L^2$, and
applying Hoeffding--Azuma now gives
\[
\Pi^0_\Lambda \bigl(\mathscr{Y}\geq\tfrac12
e^{-2\beta k}L^2 | \mathscr{A}_0 = A_0
\bigr) \leq\exp \bigl(-\tfrac1{32} e^{-4\beta k} L^2
\log^{-8} L \bigr).
\]
Together with~(\ref{eq-high-short-cont}) we finally get
%
%
\begin{equation}
\label{eq-30} \Pi^0_\Lambda(E_{\mathscr{A}} ) \leq\exp
\bigl(-e^{-4\beta k} L^{2-o(1)} \bigr).
\end{equation}
Having accounted for this probability in the
inequality~(\ref{eq-equilib-ii}), we are left with the problem of
handling the contribution of long contours, namely those whose length
exceeds $\log^2 L$.

Set
\[
\mathscr{B}= \mathscr{B}(\eta) = \bigl\{ \gamma\dvtx \gamma \mbox{ is an }h
\mbox{-contour of }\eta \mbox{ of length }|\gamma|>\log^2 L \bigr\}.
\]
We have shown in~(\ref{eq-h-cont}) that, for some $\theta\geq
\theta_0$ with a fixed $\theta_0 >\log\mu_2$ and any given contour
$\gamma$,
\[
\Pi^0_\Lambda(\mathscr{C}_{\gamma,h} ) \leq\exp\bigl(-
\theta|\gamma|\bigr).
\]
By the same argument [appealing to Proposition~\ref{p-starBound}, this
time to the more general bound~(\ref{e-contourConditional})], if, for
some $m=m(L)$, one considers contours $\gamma_1,\ldots,\gamma_m$ with
disjoint interiors $\{\Lambda_{\gamma_i}\}_{i=1}^m$ and individual
lengths all exceeding $\log^2 L$, then
\[
\Pi^0_\Lambda \Biggl(\bigcap_{i=1}^m
\mathscr{C}_{\gamma_i,h} \Biggr) \leq\exp \Biggl(-\theta\sum
_{i=1}^m|\gamma_i| \Biggr).
\]
By enumerating over the length of each contour $\gamma_i$, then
selecting its origin and a self-avoiding path for it (the number of
options for the latter being counted by~$R_{|\gamma_i|}$), we see that
\[
\Pi^0_\Lambda \Biggl(\bigcup_m
\bigcup_{\{\gamma_i\}
_{i=1}^m}\bigcap_{i=1}^m
\mathscr{C}_{\gamma_i,h} \Biggr) \leq\sum_m
\prod_{i=1}^m \sum
_{\log^2 L < |\gamma_i|\leq L^2} L^2 R_{|\gamma_i|} e^{-\theta
|\gamma_i|}.
\]
The relation between $\theta$ and $\log\mu_2$ suffices to eliminate
$R_{|\gamma_i|}$ while still retaining a factor of $\exp(-c \sum_i
|\gamma_i|)$ for some absolute $c>0$. The fact that $\sum_i|\gamma_i|$
is super-logarithmic now eliminates the $L^2$ pre-factor, as well as
the additional enumeration over $m$ itself (another polynomial factor).
Altogether,
\[
\Pi^0_\Lambda \biggl(\sum_{\gamma\in\mathscr{B}}|
\gamma| \geq\frac12 e^{-2\beta k}L \biggr) \leq\exp \bigl(-c e^{-2\beta k}L
\bigr)
\]
for some absolute $c>0$, and in particular [via the isoperimetric
inequality $|\Lambda_\gamma|\leq(L/4)|\gamma|$]
%
%
\begin{eqnarray}
\label{eq-33}
&& \Pi^0_\Lambda \biggl( \biggl|\bigcup
_{\gamma\in\mathscr{B}} \{ v \in\Lambda_\gamma\dvtx
\eta_v \geq h \} \biggr| > \frac18e^{-2\beta
k}L^2 \biggr)
\nonumber\\[-8pt]\\[-8pt]
&&\qquad \leq\exp \bigl(-c e^{-2\beta k}L \bigr).\nonumber
\end{eqnarray}
Together with the aforementioned bound on $\Pi^0_\Lambda(E_\mathscr
{A})$, this completes the proof.
\end{pf*}

\section{Lower bounds on equilibration times}
\subsection{Proof of Theorem~\texorpdfstring{\protect\ref{th-principale}}{1}: Lower bound
on the mixing time}
Set
\[
h = H - K,
\]
where $K$ is the constant from Theorem~\ref{thm-equil-shape}, and
define
\[
\mathcal{B}= \bigl\{\eta\dvtx\#\{x\in\Lambda_L\dvtx
\eta_x \geq h +1\}\geq\tfrac12 L^2 \bigr\}.
\]
Note that, since $\exp(-2\beta K ) \leq\frac12$,
equation~(\ref{eq-equilib-i}) of Theorem~\ref{thm-equil-shape} implies
that
%
%
\begin{equation}
\label{eq-31} \pi^0_\Lambda(\mathcal{B} ) = 1-o(1).
\end{equation}
%
Hence, if $\tau_\mathcal{B}$ denotes the hitting time of the set
$\mathcal{B}$, it will suffice to show that for a~sufficiently small
constant $c>0$
%
%
\begin{equation}
\label{eq-3100} \min_\eta\mathbb{P}^\eta \bigl(
\tau_\mathcal{B}< e^{cL} \bigr)=o(1).
\end{equation}
For this purpose, we observe that $\mathcal{B}$ is an increasing event
so that,
\[
\min_\eta\mathbb{P}^\eta \bigl(
\tau_\mathcal{B}< e^{cL} \bigr)=\mathbb{P}^\sqcup \bigl(
\tau_\mathcal{B}< e^{cL} \bigr)\leq\mathbb{P}^\nu
\bigl(\tau_\mathcal{B}< e^{cL} \bigr)
\]
for any initial law $\nu$.

We now choose $\nu$ as follows. Take $\delta\in(0,\frac14)$ to be a
sufficiently small constant so that in terms of the constant $C_0$
from~(\ref{e-contourFloorBound})
\[
\delta< \biggl[(\beta-\log\mu_2) \frac{4}{C_0} \exp \bigl(-4
\beta(H-h + 1) \bigr) \biggr]^2,
\]
where $\mu_2$ is the connective constant in $\mathbb{Z}^2$.
Rearranging the
above condition gives
%
%
\begin{equation}
\label{eq-25} \lambda:=\sqrt{\delta}(C_0/4) \exp \bigl(4\beta(H-h+1)
\bigr) < \beta- \log\mu_2.
\end{equation}
Then we take as starting law $\nu$ the conditional measure
$\pi_\Lambda^0(\cdot|A)$ where $A$ is the event that there exists no
$h$-contour $\gamma$ with area exceeding $\delta L^2$, that is,
\[
A = \bigcap_{\gamma\dvtx |\Lambda_\gamma|> \delta L^2} (\mathscr {C}_{\gamma,h})^c.
\]
In the sequel, $\partial A$ will denote the internal boundary of $A$
defined by
\[
\partial A:= \bigl\{\eta\in A\dvtx p \bigl(\eta,\eta' \bigr)>0
\mbox{ for some } \eta' \notin A \bigr\},
\]
where $p(\cdot,\cdot)$ is the transition probability of the dynamics.
Let $\tau_{\partial
A}$ be the hitting time of $\partial A$.

Notice that, up to time $\tau_{\partial A}$, the Glauber dynamics
started in $A\setminus\partial A$ coincides with the reflected Glauber
dynamics in $A$ whose reversible measure is precisely $\nu\equiv
\pi_\Lambda^0(\cdot|A)$. Therefore, a simple union bound over times
$t\in[0,e^{cL}]$ gives that
%
%
\begin{eqnarray}
\label{fabio1} \mathbb{P}^\nu \bigl(\tau_\mathcal{B}<
e^{cL} \bigr) &\leq&\mathbb{P}^\nu \bigl(\tau
_{\partial A}< e^{cL} \bigr)+ \mathbb{P}^\nu \bigl(
\tau_\mathcal{B}< e^{cL}\leq\tau_{\partial A} \bigr)
\nonumber
\\[-8pt]
\\[-8pt]
&\leq& e^{cL} \bigl(\nu(\partial A) +\nu(\mathcal{B}) \bigr).
\nonumber
\end{eqnarray}
Define now
\[
\tilde{A} = \bigcap_{\gamma\dvtx |\Lambda_\gamma|>1/5 \delta L^2} (\mathscr{C}_{\gamma,h})^c.
\]
Notice that $\partial A\subset A\setminus\tilde A$ since at most four
distinct $h$-contours can be combined by the modification of a single
site. Therefore,
\[
\nu(\partial A)= \frac{\pi^0_\Lambda(\partial A )}{\pi
^0_\Lambda(A)} \leq\frac{\pi^0_\Lambda(A\setminus\tilde A )}{\pi
^0_\Lambda(A)}.
\]
We next claim that
%
%
\begin{equation}
\label{eq-pi-A-minus-tildeA} \frac{\pi^0_\Lambda(A\setminus\tilde
A )}{\pi^0_\Lambda(A)}\leq e^{-c_1L}
\end{equation}
for some constant $c_1=c_1(\beta)$. Indeed, suppose that $\gamma$ is a
contour such that $|\Lambda_\gamma|/L^2 \in(\frac15 \delta,\delta
)$. As
in the proof of Proposition~\ref{p-starBound} [see formula
(\ref{eq-FKGLowerBoundbis})] and with $T_\gamma$ defined as in
(\ref{e-TDefn}),
\[
\pi^0_\Lambda(A)\geq\mathop{\sum
_{\eta\in A,
\eta\restriction_{\Lambda_\gamma}>0}}_{\mathscr{C}_{\gamma,h}}\pi _\Lambda^0(T_\gamma
\eta)=e^{\beta
|\gamma|} \pi_\Lambda^0( \eta
\restriction_{\Lambda_\gamma}>0\mid A, \mathscr{C}_{\gamma,h}) \pi
^0_\Lambda(A\cap\mathscr{C}_{\gamma,h}),
\]
where we used the fact that $T_\gamma\eta\in A$ if $\eta\in
A\cap\mathscr{C}_{\gamma,h} $. Next, we observe that, thanks to
(\ref{e-FKGLowerBound}) (which holds with identical proof also for
$\pi_\Lambda^0$),
\[
\pi_\Lambda^0(\eta\restriction_{\Lambda_\gamma}>0\mid A,
\mathscr{C} _{\gamma,h})= \pi_\Lambda^0(\eta
\restriction_{\Lambda_\gamma}>0\mid\mathscr{C} _{\gamma,h})\geq \exp
\bigl(-2c| \Lambda_\gamma|e^{-4\beta h} \bigr)
\]
to yield
%
%
\begin{equation}
\label{eq-29} \pi^0_\Lambda(\mathscr{C}_{\gamma,h} \mid
A ) \leq\exp \bigl(-\beta|\gamma| + C_0 |\Lambda_\gamma|
\exp(-4\beta h) \bigr).
\end{equation}
The isoperimetric inequality in $\mathbb{Z}^2$ gives that $|\Lambda
_\gamma|
\leq|\gamma|^2/16$ for any $\gamma$, so that, by the above choice of
parameters, any contour $\gamma$ with area less than $\delta L^2$
satisfies
\begin{eqnarray*}
C_0 |\Lambda_\gamma| e^{-4\beta h} &\leq&
C_0 \bigl(\sqrt{\delta L^2} \sqrt{|\gamma|^2/16}
\bigr) \biggl(\frac{e^{4\beta
H}}{e^{-4\beta} L} \biggr) e^{-4\beta h}
\\
&\leq&\sqrt{\delta} (C_0/4) |\gamma|e^{4\beta(H-h+1)}
\\
&=& \lambda|
\gamma|,
\end{eqnarray*}
where $\lambda$ is given by (\ref{eq-25}). Hence, the r.h.s. of
(\ref{eq-29}) is smaller than $e^{-(\beta-\lambda)|\gamma|}$. A~union
bound over $\gamma$'s with $|\Lambda_\gamma|>(\delta/5)L^2$ then
proves~(\ref{eq-pi-A-minus-tildeA}).

In conclusion, the first term in the r.h.s. of (\ref{fabio1}) is $o(1)$
if $c<c_1$. We now examine the second term $\nu(\mathcal{B})$ and we
proceed as in the proof of Theorem~\ref{thm-equil-shape}.
First, we claim that for any short $h$-contour $\gamma$ and
$v\in\Lambda_\gamma$, where ``short'' means of length smaller than
$\log^2(L)$, we have
%
%
\begin{equation}
\label{eq-h+1-given-C(gamma,h)} \Pi^0_{\Lambda} (
\eta_v \geq h+1\mid\mathscr{C}_{\gamma,h} ) \leq\tfrac14.
\end{equation}
Indeed, if $\mathscr{A}= \{ \gamma'\dvtx v \in\Lambda_{\gamma'}
\subseteq\Lambda_{\gamma}\}$ then an application
of~(\ref{e-contourConditional}) from Lemma~\ref{l:contourBound} shows
that
\[
\hat{\pi} {}^0_{\Lambda} (\eta_v \geq h+1\mid
\mathscr{C}_{\gamma,h} ) \leq\sum_{\gamma'\in\mathscr{A}} \hat{
\pi} {}^0_{\Lambda} (\mathscr{C} _{\gamma',h+1} \mid
\mathscr{C}_{\gamma,h} ) \leq\sum_{\gamma'\in\mathscr{A}}
e^{-\beta|\gamma'|} \leq\frac18
\]
for $\beta$ large since, as usual, the number of contours $\gamma'\in
\mathscr{A}$ of length $k$ is at most $k \mu_2^k$ (using the fact
that each of
these crosses the horizontal line to the right of $v$ within distance
at most $k$). To transfer this estimate to the setting of a floor,
observe that by Remark~\ref{rem:nested-contours},
%
%
\begin{equation}
\label{eq-h+1-given-C(gamma,h)-intermediate} \Pi^0_{\Lambda} (
\eta_v \geq h+1\mid\mathscr{C}_{\gamma,h} ) =
\frac{\hat{\pi}{}^0_{\Lambda} (\eta_v \geq h+1, \eta
\restriction_{\Lambda_{\gamma}}\geq0 \mid\mathscr{C}_{\gamma,h}
)} {
\hat{\pi}{}^h_{\Lambda_\gamma} (\eta\restriction_{\Lambda
_{\gamma}}\geq0 \mid\eta\restriction_{\Delta^+_\gamma} \geq h
)}.
\end{equation}
We have just established that the numerator is at most $1/8$, whereas
by monotonicity the denominator is at least
\[
\hat{\pi} {}^h_{\Lambda_\gamma} (\eta\restriction_{\Lambda
_{\gamma}}
\geq0 ) = \hat{\pi} {}^0_{\Lambda_\gamma} (\eta\restriction_{\Lambda
_{\gamma}}
\geq-h ) \geq1 - c e^{-4\beta(h+1)}|\Lambda_\gamma|
\]
thanks to Proposition~\ref{p-heightBounds} (with the same constant
$c>0$ appearing there) and a union bound over the sites of
$\Lambda_\gamma$. The fact that $|\Lambda_\gamma| \leq|\gamma|^2 =
O(\log^4 L)$ shows this last term is $1-L^{-1+o(1)}$, hence the effect
of the denominator in~(\ref{eq-h+1-given-C(gamma,h)-intermediate}) can
easily be countered by a factor of $2$, thus
establishing~(\ref{eq-h+1-given-C(gamma,h)}).

With inequality~(\ref{eq-h+1-given-C(gamma,h)}) available to us, the
very same concentration argument leading to (\ref{eq-30}) applies again
here to imply that
%
%
\begin{equation}
\label{eq-32} \Pi_\Lambda^0 \biggl({\sum
}'_\gamma\#\{x\in\Lambda_\gamma\dvtx
\eta_x\geq h+1\}\geq\frac12 L^2 \biggr)\leq
e^{-c_2L^{2-o(1)}}
\end{equation}
for some constant $c_2>0$, where the summation $\sum'_\gamma$ is over
every \textit{short} \mbox{$h$-}con\-tour~$\gamma$. Similarly,
following the same steps leading to (\ref{eq-33}), we get that
%
%
\begin{equation}
\label{eq-34} \Pi_\Lambda^0 \biggl({\sum
}''_{\gamma}\#\{x\in\Lambda_\gamma
\dvtx\eta_x\geq h+1\}\geq\frac12 L^2 \biggr)\leq
e^{-c_3 L}
\end{equation}
for a suitable $c_3>0$, where $\sum''_\gamma$ sums over\vspace*{-2pt}
every \textit{long} $h$-contour $\gamma$, that is, such that
$|\gamma|\geq (\log L)^2$ [in this case, the analog of
(\ref{eq-h-cont}) for $h$-contours of area smaller than $\delta L^2$
holds if $\delta$ chosen small]. Finally, Lemma~\ref{lem-no-ceil}
translates the statements on $\Pi_\Lambda^0$ into the analogous bounds
for $\pi_\Lambda^0$. In conclusion the second term in the r.h.s.
of~(\ref{fabio1}) is $o(1)$ if $c<\min(c_2, c_3)$, as required.



\subsection{Proof of Theorem~\texorpdfstring{\protect\ref{th-cascade}}{2}: Lower bound on
\texorpdfstring{$\tau_a$}{tau a}}
Here we prove that $\mathbb{P}^\sqcup(\tau_a\geq e^{c L^a})\to1$ as
$L\to\infty$ where, we recall,
\[
\Omega_a= \bigl\{\eta\mbox{ such that }\# \bigl\{x\in
\Lambda_L\dvtx\eta_x\geq a H(L) \bigr\} >\tfrac9{10}|
\Lambda_L| \bigr\}
\]
and $\tau_a$ is the hitting time of $\Omega_a$. We proceed as in the
proof of (\ref{eq-3100}) but now the height $h$ is chosen equal to
$aH(L)-1$, so that $e^{-4\beta h}\leq\exp(8\beta)L^{-a}$, and the set
$A$ is defined by
\[
A = \bigcap_{\gamma\dvtx |\Lambda_\gamma|> \delta L^{2a}} (\mathscr{C}_{\gamma,h})^c.
\]
Here $\delta$ is a small constant such that, for $|\Lambda_\gamma
|\leq
\delta L^{2a}$:
%
\begin{eqnarray*}
C_0 |\Lambda_\gamma| e^{-4\beta h} &\leq&
C_0 \bigl(\sqrt{\delta L^{2a}} \sqrt{|\gamma|^2/16}
\bigr) e^{-4\beta h} \leq\lambda|\gamma|,
\end{eqnarray*}
where $\lambda$ is analogous to~(\ref{eq-25}). As in (\ref{fabio1}), we
get
%
%
\begin{eqnarray}
\label{fabio2} \mathbb{P}^\nu \bigl(\tau_a <
e^{cL^a} \bigr)&\leq&\mathbb{P}^\nu \bigl(\tau
_{\partial A} < e^{cL^a} \bigr)+ \mathbb{P}^\nu \bigl(
\tau_a < e^{cL^a}\leq\tau_{\partial A} \bigr)
\nonumber
\\[-8pt]
\\[-8pt]
&\leq& e^{cL^a} \bigl(\nu(\partial A) +\nu(\Omega_a) \bigr).
\nonumber
\end{eqnarray}
Exactly the same arguments behind~(\ref{eq-pi-A-minus-tildeA}),
(\ref{eq-32}) and (\ref{eq-34}) now show that the r.h.s.
of~(\ref{fabio2}) is $o(1)$.\eject

\section{A bound using paths and flows}\label{sec-canonical-proof}

\subsection{Proof of Proposition~\texorpdfstring{\protect\ref{prop-canpaths}}{2.3}}\label{sec-canpaths}
Let $\Lambda:=\{1,\ldots,L\}\times\{1,\ldots,m\} $ and
$\Omega:=\Omega_{\Lambda,n^+}$. We introduce the canonical paths
$\gamma(\eta,\eta')$ from $\eta$ to $\eta'$ for every $
\eta,\eta'\in\Omega$. Define the diagonal lines in
$\Lambda_L=\{1,\ldots,L\}^2$
%
%
\begin{equation}
\label{eq-Ri} R_i=\{x\in\Lambda_L\dvtx
x_2=x_1 + L-i\}, \qquad i=1,\ldots,2L-1
\end{equation}
and let $\mathcal R$ denote the collection of the $R_i$. Number the
sites in $\Lambda$ following the lines $R_1,\ldots,R_{2L-1}$, so that
each line is read from southwest to northeast; at each site $x$ move
straight from $\eta_x$ to $\eta_x'$
by taking $|\eta_x-\eta_x'|$ unit steps. 
Note that since all heights satisfy $0\leq\eta_x\leq n^+$ one has
$|\gamma|\leq|\Lambda|n^+$. If $e=(\sigma,\sigma^{x_*,\pm})$ is
an edge
of a path, with $x_*\in R_{i_*}$, define $A$ as the set of
$x\in\Lambda$ such that $x< x_*$ and $B$ the set of $x>x_*$ (w.r.t. to
the order introduced above). Here $\sigma^{x_*,\pm}$ denotes the
configuration which coincides with $\sigma$ except that the height at
$x_*$ is changed by $\pm1$. Then by direct inspection one finds that
for any $\eta,\eta'\in\Omega$ such that $\gamma(\eta,\eta')\ni e$:
%
%
\begin{equation}
\label{canpath} \pi(\eta)\pi \bigl(\eta' \bigr) \leq\pi(\sigma )
\pi \bigl(\sigma^* \bigr)\exp{ \biggl(6\beta\sum_{x\in
R_{i_*}\cap\Lambda
}
\bigl| \eta_x-\eta'_x \bigr| \biggr)},
\end{equation}
where $\sigma$ satisfies $\sigma_A=\eta'_A$, $\sigma_B=\eta_B$, while
$\sigma ^*$ is the configuration obtained by setting
$\sigma^*_A=\eta_A$, $\sigma^*_B=\eta'_B$. Here $\sigma_{x_*}$ and
$\sigma ^*_{x_*}=\sigma_{x_*}\pm1$ are assigned according to the choice
of $e$. The crucial observation is that, given $e$, the map from
$(\eta,\eta')$ [such that $e\in\gamma(\eta,\eta')$] to
$(\sigma,\sigma^*)$ is an injective one. In particular, this implies:
%
%
\begin{equation}
\label{canpath1} \frac1{\pi(\sigma)}\sum_{\eta,\eta'\in\Omega} \bigl|
\gamma \bigl(\eta,\eta' \bigr) \bigr| {\pi(\eta)\pi \bigl(
\eta' \bigr)} {\mathbf1}_{e\in\gamma} \leq|\Lambda|n^+ \exp{
\bigl(6 \beta n^+ m \bigr)}.
\end{equation}
Note also that the inverse of the smallest nonzero one-step transition
probability for our chain is $|\Lambda|\exp(4\beta n^+)$. We apply then
(\ref{eq-18bis}) to obtain that the inverse spectral gap of the SOS
dynamics is upper bounded by\break  $c |\Lambda|^2 n^+\exp{(7 \beta n^+ m)}$
and~(\ref{eq-19}) follows.

\subsection{Proof of Theorem~\texorpdfstring{\protect\ref{th-pathflowsgeneral}}{2.4}}

For every $\xi$, $\xi'\in\Omega$, let
$\gamma_1$ be a path 
of length $T$ starting at $\xi$ and let $\gamma_2$ be a path of length
$T$ starting at $\xi'$. Write $\eta,\eta'$ for the corresponding
endpoints. Let $\gamma_c$ be a path from $\eta$ to $\eta'$ (to be
specified below) which depends only on $\eta$, $\eta'$ and not on
$\gamma_1$, $\gamma_2$. Call $\gamma$ the concatenation of $\gamma_1$,
$\gamma_c$, $\bar{\gamma_2}$, where $\bar{\gamma_2}$ is the path
$\gamma_2$, inverted in time. Note that $\gamma$ connects $\xi$ to
$\xi'$. If $\eta$, $\eta'\in G, $ then we let $\gamma_c$ be the path
$\tilde\gamma(\eta,\eta')$ which appears in the statement of the
theorem (recall that it stays in the set $G$) and define
\[
a(\gamma)= \frac{\pi(\xi)\mathbb{P}^\xi(\gamma_1)}{\mathbb
{P}^\xi(X(T)\in G
)}\frac{\pi(\xi')\mathbb{P}^{\xi'}(\gamma_2)}{\mathbb{P}^{\xi
'}(X(T)\in G )}.
\]
Otherwise, set $a(\gamma)=0$ and we do not need to specify $\gamma_c$
in this case. Here $\mathbb{P}^\xi(\gamma_1)$ is the probability
that\vspace*{1pt} the\vadjust{\goodbreak} process $(X(t))_t$ started at $\xi$ follows
exactly $\gamma_1$ up to time $T$, and similar for
$\mathbb{P}^{\xi'}(\gamma_2)$. Note that for fixed $\xi$,
$\xi'\in\Omega$, $\sum_{\gamma\dvtx\xi\sim\xi'}
a(\gamma)=\pi(\xi)\pi(\xi')$ where the sum is over $\eta$, $\eta'$,
$\gamma_1$, $\gamma_2$ for fixed $\xi$, $\xi'$.

Therefore, viewing the path $\gamma$ as a collection of oriented edges
$e=( \sigma,\sigma')$ and letting $\nabla_e f=f(\sigma)-f(\sigma
')$, we
have
%
%
\begin{eqnarray}
\operatorname{Var}(f) &=& \frac12\sum
_{\xi,\xi'}\pi(\xi)\pi \bigl(\xi' \bigr) \bigl(f(
\xi)-f \bigl(\xi' \bigr) \bigr)^2
\nonumber\\[-8pt]\\[-8pt]
&=&\frac12\sum
_{\xi,\xi'} \sum_{\gamma\dvtx\xi\sim\xi'} a(\gamma)
\biggl(\sum_{e\in\gamma} \nabla_e f
\biggr)^2\nonumber
\\
&\leq& \frac32 \sum_{\xi,\xi'} \sum
_{\gamma\dvtx\xi\sim\xi'} a(\gamma) \bigl(A_{\gamma}(f) +
B_{\gamma}(f) \bigr),\label{eq-varianza}
\end{eqnarray}
where
\begin{eqnarray*}
A_{\gamma}(f)&=& |\gamma_1|\sum
_{e\in\gamma_1}( \nabla_e f)^2 + |
\gamma_2|\sum_{e\in\gamma_2}(\nabla_e
f)^2,
\\
B_{\gamma}(f) &=& |\tilde\gamma|\sum_{e\in\tilde\gamma}(
\nabla_e f)^2
\end{eqnarray*}
and in the inequality we used Cauchy--Schwarz. Now we use the fact that
the Dirichlet form which appears in the definition (\ref{eq-gap}) of
the spectral gap can be written as
\[
\mathcal{E}(f):=\pi^0_\Lambda \bigl(f(I-P)f \bigr)=\frac12
\sum_{e=(\sigma,\sigma
')}\pi(\sigma)p \bigl(\sigma,
\sigma' \bigr) (\nabla_e f)^2.
\]
Recall that $p_{\min}$ denotes the smallest nonzero one-step transition
probability, and observe that
\begin{eqnarray*}
&& \sum_{\xi,\xi'} \sum_{\gamma\dvtx\xi\sim\xi'}
a(\gamma) A_{\gamma}(f)
\\
&&\qquad = 2\sum_{\xi} \sum
_{\gamma_1} |\gamma_1| \frac{\pi(\xi
)\mathbb{P}^{\xi}(\gamma_1)}{\mathbb{P}^\xi(X(T)\in G )}\sum
_{e\in\gamma
_1}(\nabla_e f)^2
\\
&&\qquad \leq 4\frac T{\alpha p_{\min}} \mathcal{E}(f)\sup_{e=(\sigma,\sigma')}
\bigl(\pi(\sigma)^{-1} \bigr)\sum_{\xi}
\sum_{\gamma_1} \pi(\xi) \mathbb{P} ^{\xi}(
\gamma_1) {\mathbf1}_{e\in\gamma_1},
\end{eqnarray*}
where we used the fact that $|\gamma_1|=T$.

Let $\mathbb{P}$
denote the law of the stationary process (started at equilibrium
$\pi$). From a union bound, one has
\[
\sum_{\xi} \sum_{\gamma_1}
\pi(\xi)\mathbb{P}^\xi(\gamma_1) {\mathbf
1}_{e\in\gamma_1} = \mathbb{P} \bigl( \exists t\in[0,T]\dvtx X(t)=\sigma,
X(t+1)=\sigma' \bigr)\leq T \pi(\sigma).
\]
It then follows that
\[
\sum_{\xi,\xi'} \sum_{\gamma\dvtx\xi\sim\xi'}
a(\gamma) A_{\gamma
}(f)\leq4 \frac{T^2}{\alpha p_{\min}} \mathcal{E}(f).
\]
As for the second term in (\ref{eq-varianza}), using stationarity of
$\pi$ one has that the sum of $\pi(\xi)\mathbb{P}^\xi(\gamma_1)$ over
all $\xi$ and paths $\gamma_1$ of length $T$ which connect $\xi$ to
$\eta$ gives $\pi(\eta)$, so that [with the definition (\ref{eq-W})]
\begin{eqnarray*}
&& \sum_{\xi,\xi'} \sum_{\gamma\dvtx\xi\sim\xi'}
a(\gamma) B_{\gamma}(f)
\\
&&\qquad \leq\frac{2}{\alpha^2} \frac12\sum
_{e=(\sigma,\sigma')}(\nabla_e f)^2\pi(\sigma) p
\bigl(\sigma,\sigma' \bigr)\sum_{\eta,\eta'\in G}
\frac{|\tilde\gamma
(\eta,\eta')|\pi(\eta)\pi(\eta')}{\pi(\sigma)
p(\sigma,\sigma')}{\mathbf1}_{e\in\tilde\gamma(\eta,\eta')}
\\
&&\qquad\leq\frac{2}{\alpha^2}W(G) \mathcal{E}(f). 
\end{eqnarray*}
Going back to (\ref{eq-varianza}) and to the definition of spectral gap
one immediately gets~(\ref{eq-pathsgen}).
\section{Upper bounds on equilibration times}\label{sec-mtub}

\subsection{Proof of Theorem~\texorpdfstring{\protect\ref{th-cascade}}{2}: Upper bound on
\texorpdfstring{$\tau_a$}{tau a} assuming Theorem~\texorpdfstring{\protect\ref{th-principale}}{1}}
Here we prove
that $\mathbb{P}^\sqcup(\tau_a\leq e^{c' L^a})\to1$ as $L\to\infty$
assuming $\tmix\leq e^{cL}$. The latter estimate will be proven
afterwards. Let us partition the box $\Lambda_L$ into nonoverlapping
squares $Q_i$ of side $C L^a$ with $C= \exp(4\beta K)$ where $K$ is the
constant appearing in Theorem~\ref{thm-equil-shape}. By monotonicity
the Glauber dynamics is higher than the auxiliary dynamics in which
each square $Q_i$ evolves independently from the others with $0$
boundary conditions on $\partial Q_i$. Using the assumption $\tmix\leq
e^{cL}$ and independence, it is standard to check that the mixing time
of this auxiliary dynamics is not larger than $e^{2c L^a}$ and
therefore, at time $T=e^{3c L^a}$, all the squares $Q_i$ are close to
their equilibrium (in total variation)
with an exponentially small error. 
Theorem~\ref{thm-equil-shape} implies that in each of them the density
of vertices higher than
\[
H \bigl(CL^a \bigr)-K= aH(L)
\]
is larger than $1-\varepsilon(\beta)$ with probability exponentially
close to one. In conclusion, apart from an exponentially small error,
$\mathbb{P}^\sqcup(\tau_a> e^{3c L^a})$ is bounded by the probability
that for some $i$ the square $Q_i$ has a density less that $1-
\varepsilon(\beta)$ of vertices higher than $H(CL^a)-K$. Thus, a union
bound suffices to conclude the proof.

\subsection{Proof of \texorpdfstring{$\tmix\leq e^{cL}$ for $n^+=\log L$}
{T MIX <= e cL for n+ = log L}}\label{sec-n+piccolo}
To prove the upper bound on $\tmix$ in Theorem~\ref{th-principale}, the
crucial point is to give the proof for $n^+=\log L$, so we assume this
is the case in this section. The general case $\log L\leq n^+\leq L$
can be then deduced via very soft arguments; see
Section~\ref{sec-genn+} below.\vadjust{\goodbreak}

For reasons that will be clear later, first of all we modify the SOS
model by considering the Boltzmann factor
$\exp[-\beta\mathcal{H}^\xi_{\Lambda_L}+f]$ instead of $\exp
[-\beta\mathcal{H}
^\xi
_{\Lambda_L}]$,
where $f$ is the external field term
%
%
\begin{equation}
\label{eq-10} f=\frac{1}L\sum_{y\in\Lambda_L}f_y
\qquad\mbox{with } f_y=\sum_{j=1}^{n^+-H}f_{y,j}:=
\sum_{j=1}^{n^+-H}c_j {
\mathbf1}_{\eta_y\leq H+ j}
\end{equation}
with $H=H(L)$ defined in (\ref{eq-hL}) and $c_j=\exp(-\beta j)$.
One\vspace*{-1pt} changes the partition function accordingly. We call
$\pi^{\xi,f}_{\Lambda_L} $ the corresponding equilibrium measure with
ceiling at $n^+=\log L$ and floor at $0$. Moreover, we will consider
the Glauber (heat bath) dynamics associated to
$\pi^{\xi,f}_{\Lambda_L}$.

%
%
\begin{remark}
Note that, if the b.c. are zero then the extra term $f$ in
(\ref{eq-10}) will not drastically change the global equilibrium
properties, since it tends to depress the heights that exceed the level
$H$ (and having $\eta_x\geq H+1$ is already an
unlikely event, for $\beta$ large). More precisely, 
$f$ equals the constant $(|\Lambda_L|/L) \sum_j c_j$ plus a (negative)
random term which one could prove, by refining the estimates of
Section~\ref{sec-eq}, to be of order $L\times\exp(-c \beta)$ for a
typical configuration (and therefore not extensive in the area of
$\Lambda_L$).
\end{remark}

The reason for modifying the equilibrium measure in such a peculiar way
is explained after Theorem~\ref{lem-dolore_infinito}.

%
%
\begin{lemma}
\label{lem-confrtmix} The ratio $\Delta$ of the mixing time of the
original system over the mixing time of the system modified as in
(\ref{eq-10}) satisfies for $L$ large
%
%
\begin{equation}
\label{eq-18} e^{-L}\leq\Delta\leq e^{L}.
\end{equation}
\end{lemma}

\begin{pf}
Going back to the definition (\ref{eq-gap}) of the spectral gap, it is
easy to see that the ratio $\tilde\Delta$ of relaxation times satisfies
\[
e^{-4|f|_\infty}\leq\tilde\Delta\leq e^{4|f|_\infty}
\]
with $f$ as in (\ref{eq-10}); see, for example, \cite{LPW}, Lemma
13.22, for such standard comparison bounds. Note that $|f|_\infty=O (L
e^{-\beta})$ if $\beta$ is large enough. Then (\ref{eq-18}) follows
from the comparison (\ref{eq-20bis}).
\end{pf}

Therefore, it is enough to prove Theorem~\ref{th-principale} for this
modified model. We denote its mixing time as $\tmix(L)$. It is 
important to realize that the Glauber dynamics for this modified SOS
model is still monotone (in the sense of Section~\ref{sec-mon}) and
that the FKG inequalities are still valid. This is because $f$ is the
sum of functions of a single height $\eta_x$. Therefore, we can apply
all the monotonicity arguments we need (including the Peres--Winkler
censoring inequality, Theorem~\ref{th-PW}).

%
%
\begin{definition} For $k\in\mathbb{N}$ and $a,A>0$, we define the
inductive statement $\mathcal{F}_k:=\mathcal{F}_{k,a,A}$: for every $L$
the mixing time satisfies
\[
T_{\mix}(L)\leq L^a e^{A L \log^{(k)}(L)},
\]
where $\log^{(k)}(x):=\max(1,\log(\log\cdots(x) ) )$ and $\log
(\log\cdots(x) )$ is the logarithm iterated $k$ times.
\end{definition}

%
%
\begin{theorem} Fix $\beta\geq\beta_0$ for some large enough constant
$\beta_0$, and $n^+=\log L$. Then $\mathcal{F}_k
\Rightarrow\mathcal{F}_{k+1}$ provided that $a=4$ and $A=C\beta$ for
some sufficiently large $C$. \label{th-induct}
\end{theorem}

\begin{pf*}{Proof of Theorem~\ref{th-principale} given Theorem~\ref{th-induct}}
For $k=1$, the statement $\mathcal{F}_1$ follows at once
from the ``canonical paths argument'', Proposition~\ref{prop-canpaths}
(with $a=3$ and $A=b\beta$, $b$ some explicit constant). Notice that
Proposition~\ref{prop-canpaths} applies with no change to the modified
model with the external field. Then, apply the theorem until
$\log^{(k)}(L)=1$. At that point we get the desired exponential mixing
time upper bound.
\end{pf*}

For the proof of Theorem~\ref{th-induct}, we need some notation. Recall
the definition (\ref{eq-Ri}) of the diagonal lines $R_i$. Define
$G^+_\ell\subset\Omega_L$ as the set of configurations $\eta$ such
that, for every $R\in\mathcal R$,
\[
\sum_{x\in R}[\eta_x-H]^+\leq L\ell
\]
(with $[x]^+=\max(x,0)$), $G^-_\ell\subset\Omega_L$ as the set of
configurations $\eta$ such that, for every $R\in\mathcal R$,
\[
\sum_{x\in R}[H-\eta_x]^+\leq L\ell
\]
and
finally $G_\ell\subset\Omega_L$ as the set of configurations $\eta$
such that, for every $R\in\mathcal R$,
%
%
\begin{equation}
\label{eq-Gk} \sum_{x\in R}|H-\eta_x|\leq
L\ell.
\end{equation}
Let also
%
%
\begin{equation}
\label{eq-pienodiB} \ell(k,L):=B \log^{(k)}(L)+\frac1{4\beta}\log A
\end{equation}
with $B$ a constant to be chosen sufficiently large (independently of
$\beta$) later, see discussion after (\ref{eq-23}) and
(\ref{eq-diritt}).

%
%
\begin{lemma}
\label{lem-dalbasso} Assume $\mathcal{F}_k$ with $a=4$ and $A=40B
\beta$ and take $T_1> e^{2L}$. Then
\[
\mathbb{P} \bigl(\eta^\sqcup(T_1)\in G^-_{\ell(k+1,L)
}
\bigr)\geq\tfrac34.
\]
\end{lemma}

%
%
\begin{lemma}
\label{lem-dallalto} Assume $\mathcal{F}_k$ with $a=4$ and $A=40 B
\beta$ and take $T_2> e^{B\beta L}$ with the same $B$ as in
(\ref{eq-pienodiB}). Then
\[
\mathbb{P} \bigl(\eta^\sqcap(T_2)\in G^+_{\ell(\infty,L)}
\bigr)\geq\tfrac34.
\]
\end{lemma}

Note that $\ell(\infty,L)=B+1/(4\beta)\log A$ is just a large constant.
We will actually see that, in both lemmas, the constant $\frac34$ can
be replaced by $1-o(1)$ where $o(1)$ vanishes for $L\to\infty$. We
refer to Sections~\ref{getup}~and~\ref{getdown} below for the proof of
Lemmas~\ref{lem-dalbasso}~and~\ref{lem-dallalto}, respectively.

\begin{pf*}{Proof of Theorem~\ref{th-induct} given Lemmas
\ref{lem-dalbasso}~and~\ref{lem-dallalto}}
Thanks to monotonicity, to
Lemmas~\ref{lem-dalbasso} and \ref{lem-dallalto}, and the fact that
$G^+_{\ell(\infty,L)}\subset G^+_{\ell(k+1,L)}$, we can set
$T^{\all}:=\max(e^{2L},e^{B\beta L} )$ and obtain that
%
%
\begin{equation}
\label{eq-11} \min_{\zeta}\mathbb{P} \bigl(\eta^\zeta
\bigl({T^{\all}} \bigr)\in G_{2\ell
(k+1,L)} \bigr)\geq\tfrac12.
\end{equation}
This is based on the fact that, if $\eta^1\leq\eta\leq\eta^2$ and
$\eta^1\in G^-_\ell,\eta^2\in G^+_{\ell'}$ then $\eta\in
G_{\ell+\ell'}$. Just write
\[
|\eta_x-H|=[\eta_x-H]^++[H-\eta_x]^+\leq
\bigl[\eta^2_x-H \bigr]^++ \bigl[H-\eta^1_x
\bigr]^+.
\]
At this point, we need the following consequence of
Theorem~\ref{th-pathflowsgeneral}.

%
%
\begin{proposition}\label{Glemma}
Let $\alpha,\ell>0$ and $T$ be such that
$\mathbb{P}(\eta^\zeta({T})\in G_\ell)\geq\alpha$ for all initial
configurations
$\zeta$. Then there exists a constant $c=c(\alpha,\beta)$ such
that
%
%
\begin{equation}
\label{Glemma1} \trel(L) \leq c \bigl[\exp{(15\beta\ell L)}+ L^{5\beta}
T^2 \bigr].
\end{equation}
\end{proposition}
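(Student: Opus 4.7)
The plan is to invoke Theorem~\ref{th:pathflowsgeneral} with the good set $G$ chosen to be $G_\ell$ (or a mild inflation thereof needed to accommodate the canonical paths): since $G_\ell\subseteq G$, the assumed hitting hypothesis $\mathbb{P}^\zeta(\eta(T)\in G_\ell)\geq\alpha$ is inherited by $G$ with the same $\alpha$, and the proof reduces to bounding the two terms $T^2/p_{\min}$ and $W(G)/\alpha$ on the right-hand side of~\eqref{eq:pathsgen}.

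The bound $p_{\min}\geq c\,L^{-2-4\beta}$ is easy. A single-site heat-bath update at site $x$ samples a new height from $\{0,\dots,n^+\}$ with conditional probabilities each bounded below by $c\,e^{-4\beta n^+}=c\,L^{-4\beta}$ (the worst case corresponding to a height differing by $n^+$ from each of the four neighbors), while the site $x$ is picked with probability $|\Lambda_L|^{-1}=L^{-2}$, and the external field $f$ of~\eqref{eq:10} contributes only a harmless $e^{O(1)}$ per update. Hence $T^2/p_{\min}\leq L^{5\beta}T^2$ once $\beta$ is large enough for the extra $L^2$ to be absorbed into $L^{5\beta}$.

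The heart of the proof is the estimate $W(G) \leq \exp(15\beta L\ell)$. For this one adapts the canonical paths of Proposition~\ref{prop:canpaths}: given $\eta,\eta'\in G$, process sites in the order determined by the diagonal lines $R_1,\ldots,R_{2L-1}$, performing unit-step updates at each site to turn $\eta$ into $\eta'$. A direct inspection shows that at every intermediate configuration $\sigma$ on such a path and every diagonal $R$, one has $\sum_{x\in R}|H-\sigma_x|\leq 2L\ell$: an already-processed diagonal agrees with $\eta'$ (contributing $\leq L\ell$), a pending one agrees with $\eta$ (contributing $\leq L\ell$), and only the current diagonal $R_{i_*}$ mixes the two. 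Choosing $G$ to respect this "$\leq 2L\ell$ per diagonal" bound—with a further careful tuning of the site ordering so that the diagonal-sum inequality $\sum_{x\in R_{i_*}}|\eta_x-\eta'_x|\leq O(L\ell)$ remains valid for the endpoints appearing in $W(G)$—then running the computation~\eqref{canpath}--\eqref{canpath1} of Proposition~\ref{prop:canpaths} with $\sum_{x\in R_{i_*}}|\eta_x-\eta'_x|\leq O(L\ell)$ replacing $n^+\, m$ converts the factor $\exp(6\beta n^+ m)$ into $\exp(O(\beta L\ell))$. The polynomial pre-factors (path length $\lesssim L^2\ell$, the factor $p(\sigma,\sigma')^{-1}\leq L^{2+4\beta}$ coming from $1/Q(a,b)$, and the bounded $f$-contribution $e^{O(Le^{-\beta})}$) are all absorbed into the exponential by a modest bump of the constant multiplying $\beta L\ell$, yielding $W(G)\leq \exp(15\beta L\ell)$. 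Plugging the two estimates into~\eqref{eq:pathsgen} produces~\eqref{Glemma1}.

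The most delicate step is the choice of $G$: it must simultaneously satisfy $G\supseteq G_\ell$ (for the hitting hypothesis), be large enough to contain the entire trajectory of the canonical path (for the path-containment hypothesis of Theorem~\ref{th:pathflowsgeneral}), and yet remain small enough that the diagonal-sum inequality bounding $\sum_{x\in R_{i_*}}|\eta_x-\eta'_x|$ by $O(L\ell)$ continues to hold for every pair of endpoints in $G\times G$ appearing in the definition of $W(G)$. Pinning down the precise constant $15$ in the exponent ultimately rests on meeting these three requirements simultaneously, which is the main technical obstacle of the proof.
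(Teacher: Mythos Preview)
Your approach matches the paper's: apply Theorem~\ref{th:pathflowsgeneral} with $G=G_\ell$ and the canonical paths of Section~\ref{sec:canpaths}, bound $p_{\min}^{-1}$ by $O(|\Lambda_L|e^{4\beta n^+})=O(L^{2+4\beta})$, and bound $W(G_\ell)$ via~\eqref{canpath}--\eqref{canpath1} using $\sum_{x\in R_{i_*}}|\eta_x-\eta'_x|\le 2L\ell$ for $\eta,\eta'\in G_\ell$. The paper's proof is two sentences and does not dwell on the point you raise in your final paragraph.

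That point---how to choose $G$ so that the canonical paths stay inside it---is a legitimate formal concern, but your proposed resolution does not work and you over-promote it to ``the main technical obstacle''. Inflating $G$ is circular: if you take $G=G_{2\ell}$ then paths between points of $G_{2\ell}$ only stay in $G_{4\ell}$, and so on indefinitely; ``tuning the site ordering'' does not break this. The correct observation is that the path-containment hypothesis in Theorem~\ref{th:pathflowsgeneral} is inessential. Its only role in the proof (Section~\ref{sec:canonical-proof}) is to restrict the maximum in~\eqref{eq:W} to edges $(a,b)$ with $a,b\in G$; if one drops that restriction and maximizes over all edges appearing in some $\tilde\gamma(\eta,\eta')$ with $\eta,\eta'\in G$, the conclusion~\eqref{eq:pathsgen} holds verbatim. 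And the canonical-path estimate~\eqref{canpath}, which yields the factor $\exp(12\beta L\ell)$ (hence $W\le\exp(15\beta\ell L)$ after absorbing the polynomial prefactors you list), holds for every edge $(\sigma,\sigma^{x_*,\pm})$ irrespective of whether $\sigma\in G_\ell$. So there is nothing to fix; the paper simply takes this reading for granted.
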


\begin{pf} 
Theorem~\ref{th-pathflowsgeneral} gives
\[
\trel\leq\frac6\alpha \biggl(\frac{T^2}{p_{\min}}+\frac{W(G_\ell
)}\alpha \biggr),
\]
where in the definition of $W(G_\ell)$ we choose the canonical paths
introduced in Section~\ref{sec-canpaths}. We know that the inverse of
the minimal transition probability $p_{\min}$ is of order
$|\Lambda_L|\exp(4\beta n^+)$. Also, from the proof of
Proposition~\ref{prop-canpaths} and the definition (\ref{eq-Gk}) of
$G_\ell$, we see easily that $W(G_\ell)\leq\exp(15 \beta\ell L)$ and
then the claim follows.
\end{pf}

Proposition~\ref{Glemma} [applied with $T=T^{\all}$, $\ell$ replaced by
$2\ell(k+1,L)$ and recalling that $n^+=\log L$], together with
(\ref{eq-11}) and (\ref{eq-20bis}), implies that
%
%
\begin{equation}
\label{eq-23} T_{\mix}(L)\leq c'(
\beta) L^3 \bigl(L^{5\beta} \bigl(T^{\all}
\bigr)^2 + e^{30
B \beta L
\log^{(k+1)}(L)+8L \log A} \bigr).
\end{equation}
If one chooses $A=40 B\beta$ (and $B$ large but independent of
$k,\beta$), then the r.h.s. of~(\ref{eq-23}) is smaller than $L^4\exp(A
L \log^{(k+1)}(L))$ for every $L$ and the claim follows.
\end{pf*}

\subsection{Proof of \texorpdfstring{$\tmix\leq e^{c L}$ for $\log L\leq n^+\leq L$}
{T MIX <= e cL for log L <= n+ <= L}}\label{sec-genn+} Once we have the statement for $n^+=\log L$,
proving it for $\log L\leq n^+\leq L$ is quite easy, so we only sketch
the main steps. Thanks to (\ref{eq-22}), it is enough to prove that
%
%
\begin{eqnarray}
\label{eq-dagiu} \bigl\|\mu^\sqcup_t-\pi \bigr\|&\leq&
L^{-4},
\\
\label{eq-dasu} \bigl\|\mu^\sqcap_t-\pi \bigr\|&\leq&
L^{-4}
\end{eqnarray}
for some $t=\exp(O(\beta L))$. Here we write $\pi$ instead of
$\pi^0_\Lambda$ for simplicity. We first note that, if $\pi,\tilde \pi$
are the equilibria with ceiling at $n^+>\log L$ and at $\log L$,
respectively, then
%
%
\begin{equation}
\label{eq-vicini} \|\pi-\tilde\pi\|\leq L^{-c_0(\beta)}
\end{equation}
with $c_0(\beta)$ that diverges as $\beta\to\infty$. Indeed, to feel
the ceiling there must be some $x$ such that $\eta_x\geq\log L$ and
this has probability at most\break  $c|\Lambda_L|\exp(-2\beta\log L)$. This
can be seen as follows. By monotonicity lift the b.c. from $0$ to
$(\log L)/2$. In this situation, the probability that the SOS interface
reaches either height $0$ or $\log L$ is $O(|\Lambda_L| e^{-2\beta
\log
L})$, as follows from Proposition~\ref{p-heightBounds} and a union
bound, cf. the proof of Lemma~\ref{lem-no-ceil}.

As for (\ref{eq-dagiu}), from Theorem~\ref{th-PW} (applied with $k=1$,
$t_1=t$, $V_1=\Lambda_L$, $a_1=0$, $b_1=\log L$) we have $\| \mu^\sqcup
_t-\pi\| \leq\| \tilde\mu{}^\sqcup_t-\pi\|$, with $\tilde\mu_t$ the law
of the evolution $\tilde\eta(t)$ with ceiling at $\log L$.
Since we proved in Section~\ref{sec-n+piccolo} that the mixing time of
the dynamics $\tilde\eta(t)$ is $\exp(O(\beta L))$, if $t=\exp (c\beta
L)$ with $c$ large one gets from (\ref{eq-sottomol}) that
$\|\tilde\mu{}^\sqcup_t-\tilde\pi\|=o(L^{-4})$ and therefore
$\|\tilde\mu{}^\sqcup_t-\pi\|=O(L^{-c_0(\beta)})+\|\tilde\mu
^\sqcup_t-\tilde\pi\|=o(L^{-4})$ if $\beta$ is large enough.

As for (\ref{eq-dasu}), assume for definiteness that $n^+$ is a
multiple of $\log L$ and let
\[
h_i=n^+ - \frac{i-1}2 \log L,\qquad i=1,\ldots, M:=
\frac{2n^+}{\log L}-1.
\]
Let us apply Theorem~\ref{th-PW} with $k=M$, $V_i=\Lambda_L$,
$t_i=i\exp(c\beta L)$ with $c$ large enough, $b_i=h_i$ and
$a_i=h_i-\log L$. Let us\vspace*{1pt} also call $U_i$ the event that
$a_i\leq\eta_x\leq a_i+\frac12 \log L$ for all $x\in\Lambda_L$. Note
that, for the associated modified dynamics $\tilde\eta(t)$, in the time
interval $0< t\leq t_1=\exp(c\beta L)$ the floor is at height
$a_1=n^+-\log L$ and the ceiling at height $b_1=n^+$. Therefore, if $c$
is chosen large enough, at time $\exp(c\beta L)$ the system is within
variation distance say $e^{-L}$ from the equilibrium with such
floor/ceiling and in particular, except with probability smaller than
$L^{-c_0(\beta)}$, the configuration is in $U_1$ [the proof of this is
very similar to the proof of (\ref{eq-vicini}) above]. If
$\tilde\eta(t_1)\in U_1$, then in the second time-lag
$\{t_1+1,\ldots,t_2\}$ the situation is similar, except that the floor
is now at $a_2$ and the ceiling is at $b_2$ (note that if instead
$\tilde\eta(t_1)\notin U_1$ then some heights are frozen forever to
values larger than $a_1+\frac12 \log L$ and the dynamics
$\tilde\eta(t)$ will not approach equilibrium). The argument is
repeated $M$ times with the result that (via a union bound on $i$), at
time $t_M=\exp(O(\beta L))$, the variation distance from equilibrium is
smaller than $M L^{-c_0(\beta)}\ll L^{-4}$ and the proof is concluded.

\subsection{Rising from the floor: Proof of Lemma~\texorpdfstring{\protect\ref
{lem-dalbasso}}{6.5}}\label{getup}

We will make a union bound on $R_i\in\mathcal R$, that is, on
$i=1,\ldots,2L-1$.
We want to upper bound
%
%
\begin{equation}
\label{primo} \mathbb{P} \biggl(\sum_{x\in R_i} \bigl[H-
\eta^\sqcup_x(T_1) \bigr]^+ \geq L\ell(k+1,L)
\biggr).
\end{equation}

%
\begin{quatation}
In principle, the argument is very simple. Around every point $x\in
R_i$ one would like to consider a square $Q_x$ of side $L_k:=L/(A
\log^{(k)}(L))$. By monotonicity, the quantity
$[H-\eta^\sqcup_x(T_1)]^+ $ appearing in (\ref{eq-06}) gets larger if
we fix to $0$ the heights on $\partial Q_x$. From the
assumption~$\mathcal{F}_k$, we know that the mixing time in $Q_x$, with
zero b.c. on $\partial Q_x$, is of order $\exp( A
L_k\log^{(k)}(L_k))\approx \exp(L)\ll T_1\approx\exp(2L)$. Thus, at
time $T_1$ the dynamics in $Q_x$ is essentially at equilibrium (w.r.t.
zero b.c. on $\partial Q_x$), so that $\eta_x\sim1/(4\beta)\log
L_k\approx H-(1/4\beta)\log^{(k+1)}(L)$ w.h.p. By taking the constant
$B$ appearing in (\ref{eq-pienodiB}) large enough, we can make
$\ell(k+1,L)\gg(1/4\beta)\log^{(k+1)}(L)$. As a consequence, the event
in (\ref{primo})
describes a very unlikely 
deviation.

In practice, the proof is considerably more involved, in particular
because the size of the squares $Q_x$ has to be chosen as a function of
$x$ [cf.~(\ref{eq-mizzica})] in order to guarantee that $Q_x$ is fully
contained in the original domain $\Lambda_L$.
\end{quatation}
Set $\sigma_x:=\frac1{4\beta}\log d(x)$, where $d(x)$ is the $L^1$
distance of $x$ from the boundary of~$\Lambda_L$. One has
%
%
\begin{equation}
\label{eq-36} \bigl[H-\eta^\sqcup_x(T_1)
\bigr]^+\leq \bigl[\sigma_x-\eta^\sqcup_x(T_1)
\bigr]^++|\sigma_x-H|.
\end{equation}
Now, there exists $C_1$ such that for every $R_i\in\mathcal R$ one has
%
%
\begin{equation}
\label{eq-4} \sum_{x\in R_i}|\sigma_x-H|\leq
\frac{C_1}{\beta}L.
\end{equation}
By the way, this is the reason why we defined the lines $R_i$ as in
(\ref{eq-Ri}): if $R_i$ were parallel to the coordinate axes and too
close to the boundary of $\Lambda_L$, then (\ref{eq-4}) would be
false. 
To prove (\ref{eq-4}), suppose without loss of generality that the
diagonal line under consideration is $R_i$ with $i\leq L$, so that
$|R_i|=i$. One has
%
%
\begin{equation}
\label{eq-1} \quad\sum_{x\in R_i}|\sigma_x-H| = i
H -\sum_{x\in R_i}\sigma_x \leq\frac1{4
\beta}i \biggl(\log L -\frac1i\sum_{x\in R_i}\log d(x)
\biggr).
\end{equation}
%
If $i$ is even
\[
\sum_{x\in R_i}\log d(x) = 2\sum
_{k=1}^{i/2}\log k = i\log i + O(i)
\]
and a similar argument takes care of the case where $i$ is odd.
Therefore,
\[
\sum_{x\in R_i}|\sigma_x-H| \leq\frac1{4
\beta}L \biggl[ \frac iL\log(L/i)+C'\frac iL \biggr]\leq
\frac{C'' L}\beta
\]
for some constants $C',C''>0$ independent of $i,\beta,L$.

Let us go back to estimating (\ref{primo}). It is clear that the $x$
such that $d(x)\leq L/\log L$ can give altogether a contribution to
$\sum_x[\sigma_x-\eta_x^\sqcup(T_1)]^+$ which is at most $O(L)$. Then,
let $\tilde R_i$ be the subset of $R_i$ such that $d(x)>L/\log L$. We
can conclude that it is enough to estimate
%
%
\begin{eqnarray}
\label{eq-06} 
&& \mathbb{P} \biggl(\sum
_{x\in\tilde R_i} \bigl[ \sigma_x-\eta^\sqcup_x(T_1)
\bigr]^+ \geq L \bigl(\ell(k+1,L)-C'''
\bigr) \biggr)
\nonumber
\\[-8pt]
\\[-8pt]
&&\qquad \leq\mathbb{P} \biggl(\sum_{x\in\tilde R_i} \bigl[
\sigma_x-\eta^\sqcup_x(T_1) \bigr]^+
\geq\frac{|\tilde R_i|}2\ell(k+1,L) \biggr).
\nonumber
\end{eqnarray}

Now for every $x\in\tilde R_i$ define a (diagonal) interval
$I_x\subset R_i$, centered at $x$ and of length
%
%
\begin{equation}
\label{eq-mizzica} |I_x|= \frac1{2}\min \biggl(d(x),\frac L{ A
\log^{(k)}(L)} \biggr).
\end{equation}
Note that the minimal $|I_x|$ is of order $L/\log L$ and the maximal
one is at most $L/(2 A \log^{(k)}(L))$ [it can be much shorter if
$|R_i|\ll L/\log^{(k)}(L)$]. Note that condition (\ref{eq-mizzica})
guarantees that around each $I_x$ one can place a square $Q_x$ of side
$m_x=2|I_x|$ and fully contained in $\Lambda_L$. Considering all the
possible $i\leq L$ and the different intervals $I_x,x\in\tilde R_i$,
the number of such intervals is trivially smaller than~$|\Lambda_L|$.
Therefore, observing that $\tilde R_i$ can be covered by (possibly
overlapping) such intervals $I_x$ of total length at most $(3/2)|\tilde
R_i|$, it is enough to prove
%
%
\begin{equation}
\label{eq-6} \mathbb{P} \biggl(\sum_{y\in I_x} \bigl[
\sigma_y-\eta^\sqcup_y(T_1)
\bigr]^+ \geq\frac{|I_x|}3\ell(k+1,L) \biggr) \leq L^{-3}
\end{equation}
for every such interval and then apply a union bound to get that the
r.h.s. of (\ref{eq-06}) is $o(1/L)$, so that after summing over the
index of $R_i$ the probability in (\ref{primo}) is still $o(1)$.

It is easy but crucial to check that
%
%
\begin{eqnarray}\label{eq-7}
\quad&& \sum_{y\in I_x} \bigl[\sigma_y-
\eta^\sqcup_y(T_1) \bigr]^+
\nonumber\\[-8pt]\\[-8pt]
&&\qquad \leq\sum _{y\in I_x} \bigl[H(m_x)-\eta^\sqcup_y(T_1)
\bigr]^++\frac c\beta|I_x| \bigl(\log A+\log^{(k+1)}(L)
\bigr),\nonumber
\end{eqnarray}
where of course, as in (\ref{eq-hL}), $H(m_x)= 1/(4\beta)\log m_x$ is
just the typical equilibrium height of the SOS interface in the center
of the square $Q_x$ with zero boundary conditions on $\partial Q_x$
(here, for lightness of notation, we forget the integer part in the
definition of $H$). Indeed, since $m_x\leq d(x)/2$ one has for $y\in
I_x$
%
%
\begin{equation}
\label{eq-cheppalle} \bigl|\sigma_y-H(m_x)\bigr|=\frac1{4\beta}
\bigl(\log d(y)-\log(m_x) \bigr).
\end{equation}
If $\min(d(x), L/{ A \log^{(k)}(L)})=d(x)$, then the r.h.s. of
(\ref{eq-cheppalle}) is upper bounded by a constant. In the opposite
case, it is bounded by
\[
\frac1{4\beta} \biggl[\log L-\log \biggl(\frac L{ A \log^{(k)}(L)}
\biggr) \biggr] \leq\frac1{4\beta} \bigl(\log A+\log^{(k+1)}(L) \bigr)
\]
and (\ref{eq-7}) follows.
Therefore, it is enough to bound
%
%
\begin{equation}
\label{eq-diritt} \mathbb{P} \biggl(\sum_{y\in I_x}
\bigl[H(m_x)-\eta^\sqcup_y(T_1)
\bigr]^+\geq C_0|I_x| \biggr)\leq L^{-3}
\end{equation}
for all such intervals, for some $C_0$ independent of $\beta$. We can
assume
that $C_0$ is large [just choose $B$ large in (\ref{eq-pienodiB})].

Monotonicity implies that if we let evolve \textit{only the heights
inside $Q_x$} with \mbox{$0$-b.c.} on $\partial Q_x$, then the random
configuration obtained at time $T_1$ is stochastically lower than the
configuration obtained via the true evolution (where \textit{all the
heights} are updated). Again by monotonicity [the event in
(\ref{eq-diritt}) being decreasing] we can lower the ceiling in the box
$Q_x$ from height $n^+=\log L$ to height $\log m_x$ and also replace
the pre-factor $(1/L)$ with $(1/m_x)$ in front of the fields $f_y,y\in
Q_x$ in (\ref{eq-10}): the dynamics thus obtained (that we simply call
``the auxiliary dynamics'') gets stochastically lower. The reason is
that the fields $f_y$ are decreasing functions of $\eta$, which tend to
``push down'' the interface, and $1/m_x>1/L$, so that
$\exp((1/L-1/m_x)f_y)$ is an increasing function.

Since we are assuming that $\mathcal{F}_k$ holds (with $a=4$), the
mixing time
of the auxiliary dynamics in $Q_x$ (with $0$-b.c. on $\partial Q_x$) is
at most
\[
m_x^4\exp \bigl(A m_x \log^{(k)}(m_x)
\bigr)\leq L^4\exp(L).
\]
As a consequence, using (\ref{eq-sottomol}), at time $T_1=e^{2L}$ the
law of the auxiliary dynamics is within variation distance
$\exp({-e^{L/2}})$ from its invariant measure, call it $\pi_{Q_x}$,
which is nothing but a space translation of
$\pi^{0,f}_{\Lambda_{m_x}}$, where\vspace*{-2.5pt} we recall that, for a
generic~$L$, $\pi^{0,f}_{\Lambda_L}$ is the equilibrium measure in
$\Lambda_L$ with the field $f$, the floor/ceiling constraints
$0\leq\eta\leq\log L$ and b.c. at zero. For simplicity, for the rest of
this subsection, we shift the square $\Lambda_L$ so that its center
coincides with the origin of $\mathbb{Z}^2$.

In conclusion,
\begin{eqnarray*}
&& \mathbb{P} \biggl(\sum_{y\in I_x} \bigl[H(m_x)-
\eta^\sqcup_y(T_1) \bigr]^+\geq C_0 |I_x| \biggr)
\\
&&\quad  \leq e^{-e^{L/2}} +
\pi^{0,f}_{\Lambda_{m_x}} \biggl(\sum_{y\in I} \bigl[H(m_x)-\eta_y
\bigr]^+ \geq C_0 |I| \biggr)
\end{eqnarray*}
and $I$ is a diagonal segment of cardinality $|I|=|I_x|=m_x/2$,
centered at the origin of $\mathbb{Z}^2$. Thus, we need the following
equilibrium estimate.

%
\begin{lemma}
\label{lem-second} For any $m$, if $I$ is a diagonal segment of length
$|I|=m/2$ centered at the origin of $\mathbb{Z}^2$, then:
%
%
\begin{equation}
\label{eq-99} \pi^{0,f}_{\Lambda_m}(\mathcal{B}):=
\pi^{0,f}_{\Lambda_m} \biggl(\sum_{y\in I}
\bigl[H(m)-\eta_y \bigr]^+ \geq C_0 |I| \biggr) \leq c
\exp(-\beta m/c),
\end{equation}
where $c>0$ is a constant and $\Lambda_m$ denotes the side-$m$ square
centered at the origin.
\end{lemma}

This will then be applied with $m$ ranging from order $L/\log L$ to
order $L/\log^{(k)}(L)$ so in all cases the r.h.s. is much smaller than
$L^{-3}$ and, putting everything together, the inequality
(\ref{eq-diritt}) and therefore the claim of Lemma~\ref{lem-dalbasso}
follows.

\begin{pf*}{Proof of Lemma~\ref{lem-second}}
Suppose this is true for the model without the field $f$, that is, for
the standard SOS measure $\pi_{\Lambda_m}^0$ of (\ref{eq-Gibbs}).
Then,\vspace*{-1pt} the same estimate follows (for $\beta$ large, with
$c$ replaced by $c/2$) for $\pi^{0,f}_{\Lambda_m}$. This is so because,
uniformly,
\[
\frac1m\sum_{y\in\Lambda_m}f_y\leq
c'm e^{-\beta/c'}
\]
for some $c'$ independent of $\beta$. To show that
$\pi_{\Lambda_m}^0(\mathcal{B})$ is small, one first proves that
%
%
\begin{equation}
\label{eq-13} \hat\pi{}^{H(m)}_{\Lambda_m} (\mathcal{B} ) \leq\exp
\bigl(-(C_0/4)\beta m \bigr)
\end{equation}
say for every $|I|$ of size between $\frac12 m$ and $\frac23 m$, where
we recall\vspace*{-1pt} from Section~\ref{sec-model} that
$\hat\pi{}^{H(m)}_{\Lambda_m}$ is the SOS measure without floor/ceiling
and\vspace*{1pt} boundary conditions at height $H(m)$. This is based on
Peierls-type arguments and the proof is relegated to
Appendix~\ref{app-13}.

We conclude the proof of Lemma~\ref{lem-second} assuming (\ref{eq-13}).
Define $\Delta_i$, $i= 1,\ldots,m/2-1$ to be the boundary of the square
of side $m-2i$ centered at zero. Let $E_i$ be the event
%
%
\begin{equation}
\label{eq-12} E_i= \biggl\{\sum_{x\in\Delta_i}
\bigl[H(m)-\eta_x \bigr]^+\geq\delta m \biggr\}
\end{equation}
for some $\delta$ to be chosen small later. Suppose that at least one
of the $E_i, i\leq m/10$ is not realized, and let $j$ be the smallest
such $i$. In that case, we look at the
\mbox{$\pi^0_{\Lambda_m}$-}probability of $\mathcal{B}$, conditionally
on the configuration of $\eta$ on $\Delta_j$. For all $x\in\Delta_j$,
if $\eta_x>H(m)$ we can lower it to $H(m)$ by monotonicity (the event
$\mathcal{B}$~is decreasing). If instead $\eta_x<H(m)$, we still change
$\eta_x$ by brute force to $H(m)$: the price to pay is that in the
final estimate we get a multiplicative error
\[
\exp \biggl( c\beta\sum_{x\in
\Delta_j} \bigl[H(m)-
\eta_x \bigr]^+ \biggr) \leq e^{c\beta\delta m}
\]
for some explicit $c$ (independent of $\beta$ and $\delta$). What we
get is that, conditionally on $j\leq m/10$ being the smallest index
such that $E_j$ is not realized, the
\mbox{$\pi^0_{\Lambda_m}$-}probability of $\mathcal{B}$ is upper
bounded by
%
%
\begin{equation}
\label{eq-dep} e^{c\beta\delta m}\hat\pi{}^{H(m)}_{\Lambda_{m-2j}}(
\mathcal{B}|0\leq\eta\leq\log m)\leq e^{c\beta\delta m}\hat\pi{}^{H(m)}_{\Lambda_{m-2j}}(
\mathcal{B}| \eta\leq\log m),
\end{equation}
where the inequality is just monotonicity.
Notice that $\hat\pi{}^{H(m)}_{\Lambda_{m-2j}}(\eta\leq\log m)$ is
large (say, larger than $1/2$, cf. Proposition~\ref{p-heightBounds}).
Then,\vspace*{1pt} we can apply~(\ref{eq-13}), since the interval $I$
we are looking at is of length $m/2$, so that certainly
$\frac12(m-2j)\leq|I|\leq \frac23(m-2j)$ and we get that the r.h.s. of
(\ref{eq-dep}) is upper bounded by
\[
\exp \bigl(c\beta\delta m-(C_0/4)\beta(m-2j) \bigr).
\]
At this point it is enough to choose $\delta$ small enough, for
instance, $\delta=C_0/(20c)$, to conclude (recall that $j\leq m/10$).

Next, we have to show that
%
%
\begin{equation}
\label{eq-14} \pi^0_{\Lambda_m} \Biggl(\bigcap
_{i=1}^{m/10} E_i \Biggr)
\end{equation}
is very small. Indeed, that event implies that
%
%
\begin{equation}
\label{eq-15} \sum_{x\in\Lambda_m} \bigl[H(m)-
\eta_x \bigr]^+\geq\delta m^2/10=C_0m^2/(200c).
\end{equation}
Write
%
%
\begin{equation}
\label{eq-16} \sum_{x\in\Lambda_m} \bigl[H(m)-
\eta_x \bigr]^+=\sum_{k>0}k
N_k,
\end{equation}
where $N_k$ is the number of points where $[H(m)-\eta_x]^+=k$. From
Theorem~\ref{thm-equil-shape}, we know that there exists some integer
$K$ such that $N_k\leq m^2e^{-2\beta k}$, except with probability
$\exp(-m\exp(\beta k))$, for $k\geq K$. Then, except with probability
of order $\exp(-c\beta m)$ one has $\sum_{k\geq1} k N_k<C_0m^2/(200c)$
if $C_0$ is chosen large enough [recall that, as discussed after
(\ref{eq-diritt}), we can assume that $C_0$ is large].
\end{pf*}




\subsection{Falling down from the ceiling: Proof of Lemma~\texorpdfstring{\protect
\ref{lem-dallalto}}{6.6}}\label{getdown} This is the part which requires the
more subtle equilibrium estimates. Let $T_2=\exp(c\beta L)$ where $c$
will be determined along the proof. We want to prove that
%
%
\begin{equation}
\label{eq-9} \mathbb{P} \bigl(\eta^\sqcap({T_2})\in
G^+_{\ell(\infty,L)} \bigr)>\tfrac34.
\end{equation}
We recall that $\ell(\infty,L)=B+1/(4\beta)\log A$ is a constant that
we can assume to be large. For simplicity, we write $\ell$ instead of
$\ell(\infty,L)$.

%
\begin{quatation}
Ideally the proof would work as follows. At equilibrium, the event
$G^+_\ell$ has probability almost $1$, see Lemma~\ref{lemma:snello}
below (since $G^+_\ell$ is decreasing, in Lemma~\ref{lemma:snello} we
lift the boundary conditions on $\partial\Lambda_L$ from $0$ to
${H'}=H+1$, the reason for the ``${+}1$'' being that, in this way, for
$\beta$ large the floor has little influence on the interface at the
typical height ${H'}$.) It is therefore sufficient to prove that at
time $T_2$ the dynamics (with b.c. $0$) is close to equilibrium. For
this purpose we will apply Theorem~\ref{th-PW} (which is allowed since
we start from the maximal configuration $\sqcap$) with the following
censoring schedule.

Cover $\Lambda_L$ with overlapping, parallel rectangles $V_i$, $i\leq
M=O(\log L)$, ordered from left to right, with longer vertical side $L$
and shorter horizontal side $(L/(\log L))$ and such that $V_i\cap
V_{i+1}$ is a rectangle $L\times(L/(2\log L))$. Now consider the
``bricks'' $B_i$ which have base $V_i$ and height $n^+=\log L$.

We first let $B_1$ evolve for a time $t_1=\exp((c/2)\beta L)$. This is
the SOS dynamics with b.c. $0$ on the left, top and bottom boundary of
$V_1$, and with b.c. $n^+$ on the right boundary. As we justify below,
we can pretend that at time $t_1$, the system in $B_1$ has reached its
own equilibrium. This equilibrium, restricted say to the left half of
$B_1$, should be extremely close to the true equilibrium in $\Lambda_L$
with $0$ b.c. This can be justified as follows. The b.c. around $V_1$
impose the presence of open contours at heights $1,\ldots, n^+$, with
endpoints at the endpoints of the r.h.s. of $V_1$. These
contours behave roughly like random walks and will stay within distance
say $L^{1/2+\varepsilon}$ from the r.h.s. of $V_1$ and only
with tiny probability will intersect the left half of $V_1$.

Next, we let $B_2$ evolve for the same amount of time $t_1$, after
which a similar argument shows that the ``true equilibrium'' is reached
in the left half of $V_2$, that is, on the right half of $V_1$, and so
on. When the $M$th block has been updated, the system should be very
close to equilibrium everywhere. In practice, there are two major
obstructions that prevent this strategy from being implemented directly
and which cause much technical pain. The first has to do with the
presence of the floor constraint at zero and will be discussed in
greater detail in the Reader's Guide~\ref{externalf} below. The second
difficulty can be understood in the following simplified
situation.

Take the SOS in a $L\times m$ rectangle $R$, with $\sqrt L\ll m\ll L$
(for us, $R$ would be $V_1$ so that $m=L/\log L$) with b.c. $1$ on one
of the size-$L$ sides and b.c. $0$ everywhere else, without any
floor/ceiling. There is an open $1$-contour joining the endpoints of
the side with $1$ b.c. The probability of such contour $\gamma$ can be
shown, via cluster expansion, to be proportional to
$\exp(-\beta|\gamma|+\Psi_R(\gamma))$ where the ``decoration'' term
$\Psi_R(\gamma)$ is of order $|\gamma|$ times a constant which is small
with $\beta$. In absence of decorations, $\gamma$ would behave as a
random walk and it would be very unlikely that it reaches distance $\gg
\sqrt L$ from the side with $1$-b.c. In presence of the decorations,
this might \textit{in principle} fail. Indeed, the decorations depend
also on how close the contour is to the boundary of $R$ (see
Appendix~\ref{clusters}), and this could induce a pinning effect of the
contour on the size-$L$ side with $0$-b.c. The way out we found to
exclude this scenario is a series of monotonicity arguments which in
practice boil down to transforming $R$ into a rectangle with
\textit{both} sides of order $L$. In this situation, since the side
with $1$-b.c. is very far from the opposite side, the ``pinning
effect'' can be shown not to occur.
\end{quatation}

To prove (\ref{eq-9}) we couple $\eta^\sqcap({T_2})$ with a suitable
equilibrium distribution as follows. Let $\Lambda$ be the $2L\times L$
rectangle obtained by attaching a square of side $L $ to the left of
the original square $\Lambda_L$. Let $\pi^{{H',f}}_{\Lambda}$ denote
the SOS equilibrium distribution in $\Lambda$ with boundary conditions
${H'}:=H+1$. Such equilibrium measure contains the field $f$, cf.
(\ref{eq-10}) (where the sum now is over $y\in\Lambda$ and the
pre-factor is still $1/L$) and floor/ceiling constraints $0\leq\eta
\leq n^+$. One has the following
lemma.%

%
\begin{lemma} \label{lemma:snello} If $\ell$ is large enough,
then
%
%
\begin{equation}
\label{eq-9.1} \lim_{L\to\infty}\pi^{H',f}_\Lambda
\bigl(\eta\in G^+_{\ell} \bigr)=1.
\end{equation}
\end{lemma}

The proof is deferred to Appendix~\ref{app-13}.

Therefore, using that the event $G^+_{\ell}$ is decreasing,
(\ref{eq-9}) follows if we prove that there exists a coupling of
$(\eta,\eta^\sqcap({T_2}))$, where $\eta$ is the restriction to
$\Lambda_L$ of the configuration distributed according to $
\pi^{{H',f}}_\Lambda$, such that
%
%
\begin{equation}
\label{eq-9.2} \mathbb{P} \bigl(\eta^\sqcap({T_2})\leq\eta
\bigr)=1+o(1).
\end{equation}
To this end, we will apply Theorem~\ref{th-PW} with exactly the
censoring described above. We first let evolve the system in $B_1$ for
a time-lag $t_1$, with $n^+$ b.c. on the r.h.s. of~$V_1$ and
$0$ b.c. elsewhere.\vadjust{\goodbreak} Then we let evolve the system in $B_2$, for another
time-lag $t_1$. For $B_2$ we have the maximal b.c. $n^+$ on the right
boundary, zero b.c. on top and bottom and the b.c. on the left boundary
is given by the configuration, say $\tau_1$,
inherited from the previous evolution on $B_1$. We repeat this
procedure for the other bricks $B_{i}$, $i<M$, with maximal b.c. on the
right boundary, zero b.c. on top and bottom and the b.c. $\tau_{i-1}$
on the left boundary; the final brick $B_M$, unlike the previous ones,
has a zero b.c. on the right boundary as well as on the top and bottom
boundaries, and b.c. $\tau_{M-1}$ on the left boundary.

We let $\tilde\eta$ denote the configuration at the end of the above
described procedure. Note that altogether the time spent is $M t_1\leq
T_2=\exp(c\beta L)$.
Theorem~\ref{th-PW} implies that we can couple $\tilde\eta$ and $
\eta^\sqcap({T_2})$ in such a way that $\mathbb{P}(\eta^\sqcap
({T_2})\leq\tilde\eta)=1$. Thus, it remains to prove that
(\ref{eq-9.2}) is satisfied with $\tilde\eta$ replacing
$\eta^\sqcap({T_2})$.

The mixing time of a brick is bounded above by $\exp((c/4)\beta L)$,
for a suitable choice of $c>0$, see Proposition~\ref{prop-canpaths}.
Therefore, after time $t_1$ the chain is extremely close to its
equilibrium in $B_1$ with the given boundary conditions. Up to a global
error term of order $e^{-L}$ we can thus assume that after each
updating of a brick, the corresponding random variable is given exactly
by the equilibrium distribution on that brick with the prescribed
boundary conditions [see equation (\ref{eq-sottomol})]. Let
$\tilde\eta_i$ denote the configuration after the updating of brick
$B_i$, restricted to the left half of the brick, that is, the brick
with basis $V_i':=V_i\cap(V_{i+1})^c$. Thus,
using monotonicity, it is sufficient to exhibit a coupling such that
%
%
\begin{equation}
\label{wtetai} \mathbb{P}(\tilde\eta_i\leq\eta_i, i=1,
\ldots,M-1)=1+o(1),
\end{equation}
where $\eta_i$ denotes the configuration $\eta$ with distribution
$\pi^{{H',f}}_{\Lambda}$, restricted to~$V_i'$.

To prove the latter estimate, we proceed as follows. Let $\mathcal
{V}^i$ denote the portion of $\Lambda_L$ covered by rectangles
$V_1,\ldots,V_i$, and set $\mathcal{V}^0:=V'_1$. For $i=0,\ldots,M$,
call $\Lambda^i$ the rectangle obtained by attaching a square of side
$L$ to the left of $\mathcal{V}^i$ (this corresponds to the ``rectangle
enlarging procedure'' outlined above), and let $\xi$ denote the b.c.
equal to:
%
%
\begin{equation}
\label{bcxi} \xi_x= %
\cases{ n^+, &\quad if $x$ belongs
to the right boundary of $\Lambda^i$, \vspace*{2pt}
\cr
H', &\quad otherwise.} %
\end{equation}
Since $V_i'\subset\Lambda^{i-1}$, by monotonicity and a repeated
application of the DLR property for the measure $\pi^{{H',f}}_{\Lambda
}$, we see that the desired claim (\ref{wtetai}) is a consequence of
the next equilibrium result.

%
%
\begin{theorem}
\label{lem-dolore_infinito} For every $C>0$, there exists $\beta_0$
such that for all $\beta\geq\beta_0$, for all $i=1,\ldots,M$,
%
%
\begin{equation}
\label{eq-vera} \bigl\|\pi^{\xi,f}_{\Lambda^i}-\pi_{\Lambda}^{{H',f}}
\bigr\|_{\Lambda^{i-1}}\leq L^{-C},
\end{equation}
where $\|\cdot\|_{\Lambda^i}$ denotes total variation of the marginal on
$\Lambda^i$.
\end{theorem}

%
\begin{quatation}\label{externalf}
We now explain why~(\ref{eq-vera}) should be true and why we crucially
need the field $f$, which is absent in the standard SOS
measure~(\ref{eq-Gibbs}). For simplicity, suppose that the boundary
height $\xi$ at the right vertical side of $\Lambda^i$ is
\vadjust{\goodbreak} ${H'}+1$ instead of $n^+=\log L$. There is an open
$({H'}+1)$-contour with endpoints at the endpoints of the side with
b.c. ${H'}+1$. The probability that this contour equals $\gamma$ should
be approximately given by the product of three factors:
\begin{longlist}[(iii)]
\item[(i)] the factor $\exp(-\beta(|\gamma|-L))$ (the minimal length
of the open contour is $L$ and one pays for the excess length);

\item[(ii)] a factor $\exp( +a(\beta) A(\gamma)/L)$ [with $A(\gamma)$
    the area to the right of the contour]; this is due to the entropic
    repulsion and $a(\beta)$ should be approximately
    $a(\beta)=\exp(-4\times2\times\beta)$, where the factor $2$ is due
    to the fact that ${H'+1}-H=2$;

\item[(iii)] $\exp(-b(\beta) A(\gamma)/L)$ where $b(\beta)$ is
approximately given (for $\beta$ large) by
$b(\beta)=c_2(\beta)=\exp(-2\beta)$ which appears in (\ref{eq-10}).
\end{longlist}
Therefore, if $\beta$ is large the third term beats the second one and
one pays both excess length and excess area, and it should be very
unlikely that the contour reaches distance $L/(\log L)\gg\sqrt{L}$ from
the right rectangle side to which it is attached. We will find this
probability to be roughly as small as $\exp(-cL/(\log L)^2)$, as would
be the case for a random walk. Once we know the contour $\gamma$ does
not go much farther than $\sqrt L$ away from the side of the rectangle,
a suitable coupling argument will prove the theorem; see
Section~\ref{accoppiamento}. Remark that without the Hamiltonian
modification (\ref{eq-10}) (i.e., with $f_y\equiv0$) the area gain
kills the length penalization, and the contour \textit{would indeed}
invade the rectangle $\Lambda^i$.
\end{quatation}

\section{Proof of Theorem~\texorpdfstring{\protect\ref{lem-dolore_infinito}}{6.12}}\label{sec-dolore}
The proof of Theorem~\ref{lem-dolore_infinito} is based on the
following lemma. Fix $i=1,\ldots,M$ and set $R:=\Lambda^i$,
$R':=\Lambda^{i-1}$, so that the rectangle $R\setminus R'$ has
horizontal length $2\ell$, where $\ell:=L/(4\log L)$. Let also $R''$
denote the rectangle of points in $R$ at distance at least $\ell$ from
the right boundary. Note that $R\supset R''\supset R'$ and
$d(R\setminus R'',R')=\ell$, see
Figure~\ref{BBfig}.
%
%
\begin{figure}

\includegraphics{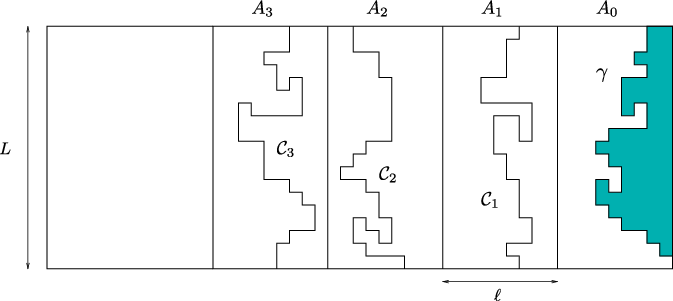}

\caption{Schematic drawing of the rectangles $R$, $R''=R\setminus
A_0$, $R'=R''\setminus A_1$. Here the contour $\gamma=\gamma_{{H'}+1}$
illustrates the event $\mathcal{B}$ in Lemma~\protect\ref{lem-1}, while
the chains $\mathcal{C}_i$ in $A_i$ illustrate the vertical crossings used in the proof
of Theorem~\protect\ref{lem-dolore_infinito}. The shaded region
corresponds to $\operatorname{Int}(\gamma)$, while $\Lambda(\gamma)=R\setminus
\operatorname{Int}(\gamma)$. The boundary between $A_2$ and $A_1$ is
$\partial R'$.}\label{BBfig}
\end{figure}

Let $\gamma_{j}(\eta)$, $j={H'}+1,\ldots,n^+$ denote the unique open
$j$-contour in the rectangle $R$ enforced by the boundary conditions,
attached to the right boundary.

%
%
\begin{lemma}
\label{lem-1} Let $\mathcal{B}$ be the event that $\gamma
_{{H'}+1}(\eta)$ does not intersect the rectangle~$R''$. For every
$C>0$, $\Pi_{\Lambda^i}^{\xi,f}(\mathcal{B}^c)=O(L^{-C})$ where $\Pi
_{\Lambda^i}^{\xi,f}$ is as in Notation~\ref{notation} and $\xi$ is as
in (\ref{bcxi}).
\end{lemma}

We first show how to obtain Theorem~\ref{lem-dolore_infinito} from the
estimate in Lemma~\ref{lem-1}. The proof of Lemma~\ref{lem-1} is given
in Section~\ref{barbecue}.

\subsection{From Lemma~\texorpdfstring{\protect\ref{lem-1}}{7.1} to Theorem~\texorpdfstring{\protect\ref{lem-dolore_infinito}}{6.12}}\label{accoppiamento}

%
\begin{quatation}
Let us first give a rough sketch of the coupling argument to be used.
By conditioning on the value $\gamma$ of the contour $\gamma_{H'+1}$
one can roughly replace the measure $\pi_{\Lambda^i}^{\xi,f}$ appearing
in\vspace*{-2pt} Theorem~\ref{lem-dolore_infinito} by the
measure~$\Pi_{\Lambda (\gamma)}^{H',f}$, where
$\Lambda(\gamma):=\Lambda\setminus\operatorname{Int}(\gamma)$ is the
region to the left of $\gamma$. Strictly speaking this is not true but
we shall reduce to a similar situation by way of monotonicity
arguments. Also, thanks to the argument of Lemma~\ref{lem-no-ceil}, one
can neglect the influence of the ceiling constraint. Thus, one
essentially wants to\vspace*{-2pt} couple
$\Pi_{\Lambda(\gamma)}^{H',f}$ and $\Pi_{\Lambda}^{H',f}$ on the region
$\Lambda^{i-1}=R'$. Thanks to Lemma~\ref{lem-1}, one can assume that
the rectangle $A_1$ is contained in $\Lambda(\gamma)$. From the Markov
property, it is sufficient to couple $\Pi_{\Lambda(\gamma)}^{H',f}$ and
$\Pi_{\Lambda}^{H',f}$ on the interface separating\vspace*{1pt} the
rectangles $A_1$ and $A_2$, see Figure~\ref{BBfig}. Thus, the desired
estimate would follow if one could exhibit a coupling such that with
large probability there exist chains $\mathcal{C}_1,\mathcal{C}_2$ of
sites in the rectangles $A_1$, $A_2$, respectively, where both
configurations are at constant height $H'$; see Figure~\ref{BBfig}. If
there were no external fields and no wall constraint, this would be a
simple consequence of Lemma~\ref{lem-peierls_c} (recall that, for
$\beta$ large, the interface is rigid and there is a density close to
$1$ of sites where the height equals the boundary height). However, due
to the presence of the external fields and the floor at zero,
establishing this fact requires extra work. The idea here is to reduce
the effective size of the system by imposing boundary conditions ${H'}$
on vertical crossings in the rectangle $A_3$, and in the rectangle
$A_0$. More precisely, let $\rho,\rho_1$ denote two
vertical crossings in $A_3$, and let $\rho_2$ denote a vertical
crossing in $A_0$. Using monotonicity and the estimate of
Lemma~\ref{quasi-rect.1} of Appendix~\ref{poschains}, we replace $
\Pi_{\Lambda(\gamma)}^{H',f}$ by $\Pi_{\Lambda(\rho,\gamma)}^{H',f}$
and $ \Pi_{\Lambda}^{H',f}$ by $\Pi_{\Lambda(\rho_1,\rho_2)}^{H',f}$,
where $\Lambda(\rho,\gamma)$ is the\vspace*{2pt} region between the chains $\rho$
and $\gamma$, while $\Lambda(\rho_1,\rho_2)$ denotes the region between
the chain $\rho_1$ and the chain $\rho_2$. Once this reduction has been
achieved, the system is contained in the union of the four rectangles
$\bigcup_{i=0}^4A_i$, a~$L\times4\ell$ rectangle, and one can easily
show that since $\ell$ is much smaller than $L$, and since $H'= H+1$,
the external field and the wall constraint can be neglected; see the
proof of Lemma~\ref{lem-V} below. At this point, one can use
Lemma~\ref{lem-peierls_c} to obtain the existence of chains
$\mathcal{C}_1,\mathcal{C}_2$ with the properties mentioned above.
\end{quatation}

We turn to the details of the proof. It is sufficient to couple
$\pi_{\Lambda^i}^{\xi,f}$ and $\pi_\Lambda^{H',f}$ on $\partial
R'$, the set of
points in $R'$ with a nearest neighbor in $R\setminus R'$, that is,
\[
\bigl\|\pi_{\Lambda^i}^{\xi,f}-\pi_\Lambda^{H',f}
\bigr\|_{R'}= \bigl\|\pi_{\Lambda^i}^{\xi,f}-\pi_\Lambda^{{H',f}}
\bigr\|_{\partial R'}.
\]
Note that,\vspace*{-2pt} because $\pi_{\Lambda^i}^{\xi,f}$ has maximal b.c. $n^+$ on
a side of $R$ and b.c. coinciding with that of $\pi_\Lambda^{{H',f}}$
on the other sides, $\pi_{\Lambda^i}^{\xi,f}$ stochastically dominates
$\pi_\Lambda^{{H',f}}$ on $\partial R'$ and therefore, by a union
bound, one has
%
%
\begin{equation}
\label{ubxn} \bigl\|\pi_{\Lambda^i}^{\xi,f}-\pi_\Lambda^{{H',f}}
\bigr\|_{R'} \leq\sum_{x\in\partial R'}\sum
_{v=0}^{n^+-1} \bigl[ \pi_{\Lambda^i}^{\xi,f}
(U_{x,v})-\pi_\Lambda^{{H',f}}(U_{x,v})
\bigr],
\end{equation}
where we define the events $U_{x,v}:=\{\eta_x>v\}$. Next, we remove the
ceiling constraint from the measures $\pi_{\Lambda^i}^{\xi,f},
\pi_\Lambda^{{H',f}}$. Since
$U_{x,v}$ are monotone events, we can 
estimate $\pi_{\Lambda^i}^{\xi,f} (U_{x,v})\leq\Pi_{\Lambda
^i}^{\xi,f}
(U_{x,v})$. Moreover, as in Section~\ref{sec-genn+}, one has
$\pi_\Lambda^{{H',f}}(U_{x,v}) = \Pi_\Lambda^{{H',f}}(U_{x,v}) +
O(L^{-C})$ where $C$ is as large as we wish provided $\beta$ is
sufficiently large.

Let\footnote{In the sequel of the proof, we introduce local notation
for various\vspace*{-1pt} conditional marginals of the measures
$\Pi_{\Lambda^i}^{\xi,f}$, $\Pi_{\Lambda}^{H',f}$ in order to keep
formulas readable.} $\tilde\mu{}^\gamma$ denote the marginal on $R''$
of $\Pi_{\Lambda^i}^{\xi,f}$ conditioned to have
$\gamma_{{H'}+1}(\eta)=\gamma$. Let $\operatorname{Int}(\gamma)$ denote
all sites enclosed by the contour $\gamma$ and the right boundary of
$R$, cf. Figure~\ref{BBfig}. Let $\Delta^-_\gamma$ denote the set of
sites $x\in R\setminus\operatorname{Int}(\gamma)$ that
have\vspace*{-1pt} either a nearest neighbor in
$\operatorname{Int}(\gamma)$, or a site at distance $\sqrt2$ in
$\operatorname{Int}(\gamma)$ in either the south--west or north--east
direction. Since conditioning on $ \gamma_{{H'}+1}(\eta)=\gamma$ forces
all sites in $\Delta^-_\gamma $ to be at height $\eta_x\leq{H'}$
(recall Definition~\ref{contourdef} of an $h$-contour), by monotonicity
one has $\tilde\mu{}^\gamma(U_{x,v})\leq\mu^\gamma(U_{x,v})$ if $\mu
^\gamma$ denotes the marginal on $R''$ of $\Pi_{\Lambda^i}^{\xi,f}$
conditioned to have height exactly
${H'}$ on all sites $x\in\Delta^-_\gamma$. 
Writing $\mathcal{B}$ for the event that
$\gamma_{{H'}+1}(\eta)$ does not intersect the rectangle $R''$, one
has, uniformly in $x,v$:
%
%
\begin{eqnarray}\label{couple}
&& \pi_{\Lambda^i}^{\xi,f} (U_{x,v})-
\pi_\Lambda^{{H',f}}(U_{x,v})
\nonumber\\[-8pt]\\[-8pt]
&&\qquad \leq L^{-C}+
\Pi_{\Lambda^i}^{\xi,f} \bigl(\mathcal{B}^c \bigr)+ \max
_{\gamma
\in\mathcal{B}
}{\mu^\gamma}(U_{x,v})-
\Pi_\Lambda^{H',f}(U_{x,v}).\nonumber
\end{eqnarray}
Lemma~\ref{lem-1} says that $ \Pi_{\Lambda^i}^{\xi,f}(\mathcal{B}
^c)=O(L^{-C})$, so that we are left with the upper bound on
${\mu^\gamma}(U_{x,v})-\Pi_\Lambda^{H',f}(U_{x,v})$ for $\gamma\in
\mathcal{B} $. We now implement the system reduction mentioned in the
sketch of the proof above.

Let $A_i$, $i=0,1,2,3$, denote the $L\times\ell$ rectangles in $R$
depicted in Figure~\ref{BBfig}. Write $A_i^t$, for the external top
boundary of $A_i$, that is, the set of sites $x\notin R$ such that $x$
has a nearest neighbor on the top side of the rectangle $A_i$.
Similarly, write $A_i^b$ for the external bottom boundary of $A_i$.
Call $\mathcal{E}(A_i)$ the set of $\mathbb{Z}^2$-bonds $e$ such that
$e$ has at least one endpoint in $A_i$ and at most one endpoint in
$A_i^t\cup A_i^b$. A~vertical crossing in $A_i$ is a connected set
$\mathcal{C}\subset\mathcal{E}(A_i)$ that connects $A_i^t$ and $A_i^b$;
see Figure~\ref{BBfig}.

Let $\mathcal{F}_-$ (resp., $\mathcal{F}_+$) denote the event that
there exists a vertical crossing $\mathcal{C}$ in $A_3$ such that
$\eta_x\leq{H'}$ for all $x\in\mathcal{C}$ (resp., a crossing
$\mathcal{C}$ in $A_3$ and a crossing $\mathcal{C}'$ in $A_0$ such that
$\eta_x\geq{H'}$, $x\in\mathcal{C}\cup\mathcal{C}'$). On the event
$\mathcal{F}_-$ one may consider the leftmost vertical crossing in
$A_3$ with the required property, where leftmost is defined according
to lexicographic order. From the Markov property of $\mu^\gamma$, and
using monotonicity,
\[
\mu^\gamma(U_{x,v})\leq\mu^\gamma \bigl(
\mathcal{F}_-^c \bigr) + \max_{\rho
}\mu
^{\gamma,\rho}(U_{x,v}),
\]
where $\mu^{\gamma,\rho}$ stands for the measure $\mu^\gamma$
conditioned to have
height ${H'}$ on $\rho$, and $\rho$ ranges over all possible vertical
crossings in $A_3$.
Similarly, on the event $\mathcal{F}_+$ denote $\rho_1$ (resp., $\rho
_2$) the rightmost (resp., leftmost) crossing in $A_0$ (resp., $A_3$)
and write
%
%
\begin{eqnarray}
\label{eq-35} \Pi_\Lambda^{H',f}(U_{x,v})&\geq&
\bigl(1- \Pi_\Lambda^{H',f} \bigl(\mathcal{F}_+^c
\bigr) \bigr) \min_{\rho
_1,\rho_2}\mathcal{Q}^{\rho_1,\rho_2}(U_{x,v})
\nonumber\\[-8pt]\\[-8pt]
&\geq&\min_{\rho _1,\rho _2}\mathcal{Q}^{\rho _1,\rho_2}(U_{x,v})-
\Pi_\Lambda^{H',f} \bigl(\mathcal{F}_+^c \bigr),\nonumber
\end{eqnarray}
where $\mathcal{Q}^{\rho_1,\rho_2}$ stands for the measure $\Pi
_\Lambda^{H',f}$
conditioned to have height ${H'}$ on $\rho_1,\rho_2$, and $\rho
_1,\rho_2$ range
over all possible vertical crossings in $A_0,A_3$. Altogether,
%
%
\begin{eqnarray}
\label{mugnu}
\qquad && \mu^\gamma(U_{x,v})- \Pi_\Lambda^{H',f}
(U_{x,v})
\nonumber\\[-8pt]\\[-8pt]
&&\qquad \leq\mu^\gamma \bigl(\mathcal{F}_-^c
\bigr) + \Pi_\Lambda^{H',f} \bigl(\mathcal{F}_-^c
\bigr) + \max_{\rho,\rho_1,\rho_2} \bigl\llvert\mu^{\gamma,\rho
}(U_{x,v})-\mathcal{Q} ^{\rho
_1,\rho_2}(U_{x,v}) \bigr\rrvert.\nonumber
\end{eqnarray}
It follows\vspace*{-1pt} from Lemma~\ref{quasi-rect.1} of Appendix~\ref{poschains}
that $\mu^\gamma(\mathcal{F}_-^c)$ and $\Pi_\Lambda
^{H',f}(\mathcal{F}_+^c)$ are
$O(e^{-L^{1-\varepsilon}})$.
Notice that $ \mu^{\gamma,\rho}$ (resp., $\mathcal{Q}^{\rho_1,\rho_2}$)
are SOS measures with exactly ${H'}$ b.c. around the domain whose
boundary is determined by $\rho$ (resp., $\rho_1$) on the left and by
$\Delta^-_\gamma$ (resp., $\rho_2$) on the right. Such domain has (by
construction) horizontal size of order $\ell$ and vertical size $L$. To
simplify the notation, we shall write $\mu^\gamma,\mathcal{Q}$ for $
\mu^{\gamma,\rho}, \mathcal{Q}^{\rho_1,\rho_2}$.

We now turn our attention to vertical crossings in the rectangles
$A_1,A_2$. Consider the independent coupling $\mathbb{P}$ of $\mu
^\gamma,\mathcal{Q}$ on $A_1\cup A_2$. Writing $(\eta,\eta')$ for the
corresponding random variables, let $\mathcal{A}_i$, $i=1,2$ denote the
event that there exists a vertical crossing $\mathcal{C}$ in $A_i$ such
that $\nabla_e\eta=\nabla_e\eta'=0$ for all bonds $e$ with both
endpoints in $\mathcal{C}$. Note that if $\mathcal{C}$ is a vertical
crossing in $A_i$ as above, then $\eta_\mathcal{C}=\eta
'_\mathcal{C}={H'}$, because of the boundary conditions equal to ${H'}$
on the top and bottom boundary of $A_i$, $i=1,2$. On the event
$\mathcal{A}_1\cap\mathcal{A}_2$, one may consider the leftmost
vertical crossing $\mathcal{C}_2$ in $A_2$ and the rightmost vertical
crossing $\mathcal{C}_1$ in $A_1$. From the Markov property of the
Gibbs measures $\mu^\gamma,\mathcal{Q}$ and the fact that
$\mathcal{Q}(\cdot|\eta_{\mathcal{C}_1}=\eta_{\mathcal{C}_2}={H'})$
and $\mu^\gamma(\cdot|\eta_{\mathcal{C}_1}=\eta_{\mathcal{C}_2}={H'})$
have the same marginal on $\partial R'$ (observe that $\partial R'$ is
just at the boundary between $A_1$ and $A_2$), one obtains that
%
%
\begin{equation}
\label{couple1} \bigl\llvert{\mu^\gamma}(U_{x,v})-
\mathcal{Q}(U_{x,v}) \bigr\rrvert
\leq\mathbb{P} \bigl(
\mathcal{A}_1^c \bigr)+\mathbb{P} \bigl(
\mathcal{A}_2^c \bigr).
\end{equation}

We shall focus on the event $\mathcal{A}_1^c$, since the event
$\mathcal{A}_2^c$ can be treated in the same way. To estimate
$\mathbb{P}(\mathcal{A}_1^c)$, we use the fact (see, e.g.,
\cite{Grimmett}, Lemma 11.21) that nonexistence of a vertical crossing
in $A_1$ implies the existence of a horizontal dual crossing in $A_1$.
More precisely, let $A_1^r$ denote the r.h.s. of $A_1$, that
is, the set of dual bonds $e'$ such that $e'$ crosses an edge of the
form $e=(x,y)$ with $x\in A_1$ and $y\in R\setminus R''$. Similarly,
let $A_1^\ell$ denote the l.h.s. of $A_1$. We say that a dual
bond $e'$ is in $A_1$ if $e'$ crosses a bond $e\in\mathcal{E}(A_1)$.
Then, the event $\mathcal{A}_1^c$ implies that there exists a connected
set $\mathcal{D}$ of dual bonds $e'$ in $A_1$ which connects the lines
$A_1^r$~and~$A_1^\ell$, and such that for every $e'\in\mathcal{D}$
either $\nabla_{e'}\eta\neq0$ or $\nabla_{e'}\eta'\neq0$. Here we use
the notation $\nabla_{e'}\eta:=\nabla_e\eta$ if $e'$ is the dual bond
that crosses $e$. Moreover, for a given $\mathcal{D}$ as above, there
must be a set $V\subset\mathcal{D}$ such that $|V|\geq|\mathcal{D}|/2$
and such that either $E_V:=\{\nabla_{e'}\eta\neq0$ for all $ e'\in V\}$
or $ F_V:=\{\nabla_{e'}\eta'\neq0$ for all $e'\in V\}$. Thus, using
a~union bound, one obtains
%
%
\begin{equation}
\label{V01} \mathbb{P} \bigl(\mathcal{A}_1^c \bigr)\leq
\sum_{\mathcal{D}} \mathop{\sum
_{V\subset
\mathcal{D}\dvtx}}_{|V|\geq|\mathcal{D}|/2} \bigl( \mu^\gamma(E_V)+
\mathcal{Q}(F_V) \bigr),
\end{equation}
where the first sum is over all connected sets of dual bonds
$\mathcal{D}$ connecting $A_1^r$~and~$A_1^\ell$ as above. We will need
the following lemma.

%
%
\begin{lemma}
\label{lem-V} There exist constants $C,c,\beta_0>0$ independent of
$\beta$ such that, for every set $V$ of dual bonds in $A$ with
$|V|\geq
\ell/2$, one has for
all $\beta\geq\beta_0$ 
%
%
\begin{equation}
\label{V1} \max \bigl\{\mu^\gamma(E_V),
\mathcal{Q}(F_V) \bigr\}\leq Ce^{-c \beta|V|}.
\end{equation}
\end{lemma}
Let us conclude the proof of Theorem~\ref{lem-dolore_infinito} assuming
for a moment the validity of Lemma~\ref{lem-V}. From (\ref{V01}),
summing over the possible (connected) sets $\mathcal{D}$, and using
$|\mathcal{D} |\geq\ell\geq L^{1-\varepsilon}$, for all
$\varepsilon>0$, if $\beta\geq\beta_0$:
%
%
\begin{eqnarray}
\label{VV} \mathbb{P} \bigl(\mathcal{A}_1^c \bigr)&
\leq&2C \sum_{k\geq\ell}\sum_{\mathcal
{D}\dvtx |\mathcal{D}|=k}\mathop{\sum_{V\subset\mathcal{D}\dvtx}}_{|V|\geq k/2}e^{-c\beta |V|}\nonumber
\\
&\leq&2C\sum_{k\geq\ell}\sum _{\mathcal{D}\dvtx |\mathcal{D}|=k}2^ke^{-c\beta k/2}\nonumber
\\
&\leq& 2C\sum _{k\geq\ell}6^ke^{-c\beta k/2}
\\
&\leq& C' e^{-c\beta\ell/4}\nonumber
\\
&=& O \bigl(\exp \bigl(-L^{1-\varepsilon}
\bigr) \bigr).
\nonumber
\end{eqnarray}
Since the constants implied in (\ref{VV}) are uniform in $x,v$ and the
choice of \mbox{$\gamma\in\mathcal{B}$}, the claim of
Theorem~\ref{lem-dolore_infinito} follows from (\ref{couple}) and
(\ref{ubxn}). It remains to prove Lemma~\ref{lem-V}. This is where the
reduction from $\mu^\gamma$ to $\mu^{\gamma,\rho}$ and
$\Pi_\Lambda^{H',f}$ to $\mathcal{Q}^{\rho_1,\rho_2}$ becomes
important.

\begin{pf*}{Proof of Lemma~\protect\ref{lem-V}}
We shall prove the bound concerning $\mu^\gamma=\mu^{\gamma,\rho}$
only, since the same proof works for
$\mathcal{Q}=\mathcal{Q}^{\rho_1,\rho_2}$. Consider the region
$\Lambda_0\subset R$ delimited on the left by $\rho$ and on the right
by $\gamma$. Since $\rho$ is a vertical crossing in~$A_3$, one has
$A_1\subset \Lambda_0$. A crucial fact is that $|\Lambda_0|\leq4L\ell$.
Let as usual $\hat\pi{}^{H'}_{\Lambda_0}$ denote the SOS measure on
$\Lambda_0$ with boundary condition ${H'}$ outside of
$\Lambda_0$, 
with no floor, no ceiling and no external fields. From
Lemma~\ref{lem-peierls_c} one has $\hat\pi{}^{H'}_{\Lambda _0}(E_V)\leq
e^{-\beta|V|/2}$ for any $V$. Thus, it suffices to show that
%
%
\begin{equation}
\label{V11} \mu^\gamma(E_V)\leq Ce^{C\ell} \hat
\pi{}^{H'}_{\Lambda_0}(E_V)
\end{equation}
for some constant $C$ independent of $\beta$. Note that the external
fields contribute with the term $0\leq\frac1L\sum_{x\in\Lambda_0}
f_{x}\leq C\ell$ to the Hamiltonian, and therefore, at the price of
a~factor $e^{C\ell}$ we can remove all external fields in our measure
$\mu^\gamma$. Then
%
%
\begin{equation}
\label{V12} \mu^\gamma(E_V)\leq e^{C\ell}
\frac{\hat\pi{}^{H'}_{\Lambda_0}(E_V)}{\hat\pi{}^{H'}_{\Lambda
_0}(\eta_x \geq 0\ \forall x\in\Lambda_0)}.
\end{equation}
Next, from the FKG inequality, one has
%
%
\begin{equation}
\label{V13} \qquad\hat\pi{}^{H'}_{\Lambda_0}(\eta_x \geq0
\ \forall x\in \Lambda_0)\geq\prod_{x\in\Lambda_0}
\hat \pi{}^{H'}_{\Lambda_0}(\eta_x \geq0) \geq\prod
_{x\in\Lambda_0} \bigl(1-C e^{-4\beta{H'}} \bigr),
\end{equation}
where we use the equilibrium estimate
$\hat\pi{}^{H'}_{\Lambda_0}(\eta_x < 0) =
\hat\pi{}^0_{\Lambda_0}(\eta_x
> {H'}) \leq\break  C e^{-4\beta{H'}}$; see Proposition~\ref{p-heightBounds}.
Since $e^{-4\beta {H'}}=e^{-8\beta }/L$, one has
%
%
\begin{equation}
\label{V130} \prod_{x\in\Lambda_0} \bigl(1-C
e^{-4\beta{H'}} \bigr)\geq C_1^{-1}e^{-C_1\ell}
\end{equation}
for a suitable constant $C_1>0$. The desired conclusion follows from
(\ref{V12}). This ends the proof.
\end{pf*}

\subsection{Proof of Lemma~\texorpdfstring{\protect\ref{lem-1}}{7.1}}\label{barbecue}

%
\begin{quatation}
Roughly speaking, the proof of Lemma~\ref{lem-1} works as follows.
There are $n^+-{H'}$ open contours attached to the r.h.s. of
$R$ (call it $r$) and let $\gamma_j$, $j\in\{{H'}+1,\ldots,n^+\}$
denote the $j$-contour. First, one proves that the $n^+$-contour cannot
reach distance say $L/(\log L)^2$ from $r$. For this, one lifts from
${H'}$ to $n^+-1$ the b.c. around the three sides of $R$ different from
$r$ (this is allowed by monotonicity). This way, there is now \textit{a
single open contour} and the estimate follows from
Proposition~\ref{dkspro}. Next, we want to prove that $\gamma_{n^+-1}$
cannot reach distance $L/(\log L)^2$ from $\gamma_{n^+}$, that is,
distance $2L/(\log L)^2$ from $r$. Morally the proof works as for the
previous case, except that now the b.c. $n^+$ at $r$ is replaced by the
b.c. $n^+-1$ at $\gamma_{n^+}$. The argument is then repeated
iteratively and the statement of the lemma follows when $j={H'}+1$. In
practice, there are many additional difficulties, which is why the
proof is so much involved. The main obstacles are the following:
\begin{enumerate}[(3)]
\item[(1)] Proposition~\ref{dkspro} cannot be applied directly,
    because it holds when both the floor constraint $\eta\geq0$ and
    the field $f$ are absent. However, Proposition~\ref{clam} will
    show that (morally) the field compensates the effect of the
    floor (which would tend to push the contours away from $r$).

\item[(2)] Once $\gamma_{j}$ is fixed, it is not true that the next
contour (i.e., $\gamma_{j-1}$) sees boundary conditions $j-1$
in a new domain determined by $\gamma_{j}$. The point is that,
from definition of contours, we only know that the heights just
to its left are \textit{at most} $j-1$, not \textit{exactly}
$j-1$. We will use monotonicity to be able to change to
$j-1$-b.c.

\item[(3)] Applying Proposition~\ref{dkspro} as outlined above to
estimate the probability of large deviations of $\gamma_{j-1}$
given $\gamma_j$ requires that the right boundary of the system
(i.e., the configuration of $\gamma_j$), where b.c. are $j-1$,
is not too wild. In practice, one needs it to be a path
connecting top and bottom of the rectangle $R$, with
transversal fluctuations at most of order say $L^{\varepsilon}$
for some small $\varepsilon$. We will apply the results of
Appendix~\ref{poschains} to infer that, indeed, to the left of
$\gamma_j$ and not far away from it there is a chain of sites,
with transversal fluctuations of the required order, where
heights are exactly $j-1$.
\end{enumerate}
\end{quatation}

We use a sort of induction on the index of the open contours
$\gamma_j$, $j\in\{{H'}+1,\ldots,n^+\}$, where $n^+=\log L$. Let $A_0$
denote the rightmost $L\times\ell$ rectangle inside $R$ as in
Figure~\ref{BBfig}, and write $A_0=\bigcup_{j={H'}+1}^{n^+}B_j$ where
$B_j$ are\vspace*{-1pt} nonoverlapping\vadjust{\goodbreak} $L\times\ell_0$ rectangles,
ordered from left to right, such that $\ell_0=\ell/(n^+-{H'})\approx
L/(4(\log L)^2)$. Every rectangle $B_j$ is further divided into two
nonoverlapping rectangles $B_j^1, B_j^2$, ordered from left to right,
such that $B_j^1$ is a $L\times\ell_1$ rectangle with
$\ell_1=L^{\delta}$, for some (arbitrarily) small $\delta>0$, and
$B_j^2=B_j\setminus B_j^1$ is a $L\times\ell_2$ rectangle, with
$\ell_2=\ell_0-\ell_1\sim\ell _0$; see Figure~\ref{bjs}.
%
%
\begin{figure}

\includegraphics{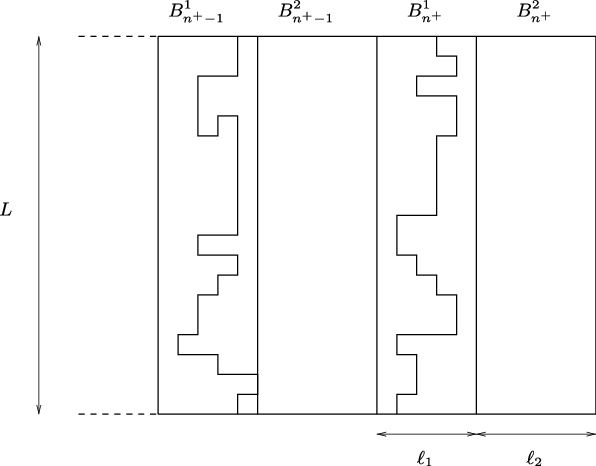}

\caption{The rectangles $B_j^1$, $B_j^2$, with the associated vertical
crossings $\rho_j$, for $j=n^+-1$ and $j= n^+$.}\label{bjs}
\end{figure}

Define vertical crossings in a rectangle as in
Section~\ref{accoppiamento}. For $j\in\{{H'}+1,\ldots,n^+\}$, consider
the event $\mathcal{B}_j$ that there exists a vertical crossing
$\mathcal{C}_j$ in $B_j^1$, such that $\eta_x\leq j-1$ for all
$x\in\mathcal{C}_j$. In particular, on $\mathcal{B}_{{H'}+1}$, there
exists a vertical crossing $\mathcal{C}_{{H'}+1}$ in $A_0$ with
$\eta_x\leq{H'}$ for all $x\in\mathcal{C}_{{H'}+1}$. Thus
$\mathcal{B}\supset\mathcal{B}_{{H'}+1}$, and it will be sufficient to
estimate from above the probability
$\Pi_{\Lambda^i}^{\xi,f}(\mathcal{B}_{{H'}+1}^c)$. Clearly,
\[
\Pi_{\Lambda^i}^{\xi,f} \bigl(\mathcal{B}_{{H'}+1}^c
\bigr)\leq\sum_{j={H'}+1}^{n^+}\Pi
_{\Lambda^i}^{\xi,f} \bigl(\mathcal{B}_j^c
\cap\mathcal{B}_{j+1} \bigr),
\]
where $\mathcal{B}_{n^++1}$ denotes the whole probability space. On the
event $\mathcal{B}_{j+1}$, let $\mathcal{C}_{j+1}$ denote the rightmost
vertical crossing $\mathcal{C}$ in $B_{j+1}^1$ such that $\eta_x\leq
j$, $x\in\mathcal {C}$. By conditioning on the event
$\{\mathcal{C}_{j+1}=\rho_{j+1}\}$, and using that the events
$\mathcal{B}_j^c$ are increasing, one has
%
%
\begin{equation}
\label{bjs2} \Pi_{\Lambda^i}^{\xi,f} \bigl(\mathcal{B}_{{H'}+1}^c
\bigr)\leq\sum_{j={H'}+1}^{n^+}\max
_{\rho_{j+1}}\mu_{\rho_{j+1}} \bigl(\mathcal{B}_j^c
\bigr),
\end{equation}
where $\rho_{j+1}$ ranges over all possible vertical crossings in
$B_{j+1}^1$ (for $j=n^+$, it is understood that $\rho_{j+1}$ coincides
with the right boundary of $R$), and $\mu_{\rho_{j+1}}$ stands for the
SOS Gibbs measure on the region $\Lambda_{(j)}\subset R$ defined as the
set of sites $x\in R$ to the left of the crossing $\rho_{j+1}$, with
\begin{itemize}
\item boundary condition $\eta_x=j$ for $x\in\rho_{j+1}$ and
$\eta_x=j-1$ on all other boundary sites. Note that a portion
of the boundary height has been lifted from ${H'}$ to
$j-1\geq{H'}$. The advantage is that, this way, there is a
unique open contour under the measure $\mu_{\rho_{j+1}}$,
rather than $j-{H'}$ of them;

\item floor constraint $\eta_x\geq0$;

\item external field
%
%
\begin{equation}
\label{hfact} \frac1L\sum_{x\in\Lambda_{(j)}} f_{x,j-1-H},
\end{equation}
where we recall that $f_{x,j}=\exp(-\beta j){\mathbf1}_{\eta_x\leq
H+j}$, cf.~(\ref{eq-10}). Note that the fields in~(\ref{eq-10})
with index different from $j-1-H$ have been removed. This is
allowed since the function $\eta\mapsto f_{x,a}(\eta)$ is
decreasing. The effect of the field $f_{x,a}$ is to depress the
area of the $(a+H+1)$-open contour and, since there is just one
open contour, we need only the term with $a=j-1-H$.
\end{itemize}
%

Lemma~\ref{lem-1} is then a consequence of (\ref{bjs2}) and the
following claim.

%
\begin{claim}
For $j\geq{H'}+1$, uniformly in the vertical crossing $\rho_{j+1}$ in
$B_{j+1}^1$ and for every $C>0$,
%
%
\begin{equation}
\label{Blemma01} \mu_{\rho_{j+1}} \bigl(\mathcal{B}_j^c
\bigr)=O \bigl( L^{-C} \bigr).
\end{equation}
\end{claim}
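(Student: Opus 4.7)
The plan is to exploit the fact that, under $\mu_{\rho_{j+1}}$, the boundary conditions (height $j-1$ everywhere except on $\rho_{j+1}$ where it is $j$) force a \emph{unique} open $j$-contour $\gamma_j$ attached to the endpoints of $\rho_{j+1}$. I would first show that with overwhelming probability $\gamma_j$ does not travel horizontally more than $\ell_2\sim L/(\log L)^2$ from $\rho_{j+1}$, and then, on that event, produce a vertical crossing of $B_j^1$ at height exactly $j-1$ using the Peierls-type crossing lemma (Lemma~\ref{lem:peierls_c}). Since $\rho_{j+1}\subset B_{j+1}^1$ and $B_j^1$ lies to the left of $B_j^2$, the separation between $\rho_{j+1}$ and $B_j^1$ is at least $\ell_2$, so the two steps do match up.

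\textbf{Step 1 (localization of $\gamma_j$).} Let $\cE$ be the event that $\gamma_j$ meets $B_j^1$. Were the floor and the field $f_{x,j-1-H}$ absent, the cluster-expansion representation of Proposition~\ref{dkspro} would realize $\gamma_j$ as a perturbation of a random walk bridge of length $\sim L$, yielding the Gaussian tail $\mu_{\rho_{j+1}}(\cE)\leq \exp(-c\ell_2^2/L)=\exp(-cL/(\log L)^4)$, which is $O(L^{-C})$ for every $C$. The role of the field~\eqref{hfact} — as explained in Reader's Guide~\ref{externalf} — is precisely to cancel, at leading order in $\beta$, the area term that the floor adds to the law of $\gamma_j$; Proposition~\ref{clam} implements this cancellation so that the same Gaussian tail survives for $\mu_{\rho_{j+1}}$. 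Transversal fluctuations of the right boundary $\rho_{j+1}$ are bounded by $\ell_1=L^\delta\ll\ell_2$, so the rectangle effectively seen by $\gamma_j$ is of horizontal size $\asymp\ell_2$ and the above estimate is not spoiled.

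\textbf{Step 2 (crossing at height $j-1$ inside $B_j^1$).} On $\cE^c$, condition on $\gamma_j=\gamma$ and let $\Lambda(\gamma)\subset\Lambda_{(j)}$ be the region to the left of $\gamma$, so that $B_j^1\subset\Lambda(\gamma)$ and $\mathrm{dist}(B_j^1,\gamma)\geq \ell_2$. By definition of a $j$-contour the heights on $\Delta_\gamma^-$ are at most $j-1$, while on $\partial\Lambda_{(j)}\cap\partial\Lambda(\gamma)$ they equal exactly $j-1$. Raising $\Delta_\gamma^-$ to height $j-1$ by monotonicity (the event $\cB_j^c$ is increasing in $\eta$), the conditional law on $\Lambda(\gamma)$ is stochastically dominated by $\pi^{j-1,f}_{\Lambda(\gamma)}$. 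Since $j-1\geq H'$, the floor and the field both contribute only $O(L^{-C})$ in total variation to this measure (by the same calculation as in the proof of Lemma~\ref{lem:V} and of Lemma~\ref{lem:no-ceil}), so I may compare it to $\hat\pi^{j-1}_{\Lambda(\gamma)}$. Lemma~\ref{lem:peierls_c} then yields a vertical crossing $\cC_j\subset B_j^1$ on which $\eta\equiv j-1$, except with probability $e^{-c\beta L}$, realizing $\cB_j$.

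Putting the two steps together,
\[
\mu_{\rho_{j+1}}(\cB_j^c)\;\leq\; \mu_{\rho_{j+1}}(\cE)\;+\;\sup_{\gamma\in\cE^c}\pi^{j-1,f}_{\Lambda(\gamma)}(\cB_j^c)\;=\;O(L^{-C}),
\]
uniformly in $\rho_{j+1}$ and $j$, for every $C>0$ provided $\beta$ is large. The main difficulty is Step~1: handling simultaneously the floor, the external field~\eqref{hfact}, and the fact that the right boundary of the system is itself a rough vertical crossing $\rho_{j+1}$ rather than a straight segment. The tuning of $f_{x,j-1-H}$ is crucial here, and making the cluster-expansion/random-walk comparison rigorous in this geometry requires the monotonicity reductions of Section~\ref{accoppiamento} and the chain estimates of Appendix~\ref{poschains}.
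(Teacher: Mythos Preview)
Your two-step decomposition is exactly the paper's: Step~1 is Lemma~\ref{Blemma} (the event $E_j$ there is your $\cE$), and Step~2 is the appeal to \eqref{eq:27} in Lemma~\ref{quasi-rect.1}. Step~1 is fine as a sketch and correctly identifies Proposition~\ref{clam} and Proposition~\ref{dkspro} as the two inputs.

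Step~2, however, has a genuine gap. You write that ``the floor and the field both contribute only $O(L^{-C})$ in total variation'' to $\pi^{j-1,f}_{\Lambda(\gamma)}$, allowing you to pass to $\hat\pi^{j-1}_{\Lambda(\gamma)}$ and invoke Lemma~\ref{lem:peierls_c}. This is false: on $\cE^c$ the domain $\Lambda(\gamma)$ contains everything in $R=\Lambda^i$ to the left of $B_j$, so $|\Lambda(\gamma)|=\Theta(L^2)$. The calculation of Lemma~\ref{lem:V} (which needs $|\Lambda_0|\le 4L\ell$ with $\ell=L/(4\log L)$) then produces a multiplicative error $e^{O(L)}$, not $O(L^{-C})$; and Lemma~\ref{lem:no-ceil} is about removing a \emph{ceiling}, not a floor. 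Moreover, the Peierls bound inside $B_j^1$ is only $e^{-c\beta L^{\delta}}$ (the dual horizontal crossing has length $\ge\ell_1=L^{\delta}$), not $e^{-c\beta L}$, so the error $e^{O(L)}$ would swamp it. Finally, monotonicity does not rescue you either: removing the floor makes the surface stochastically \emph{lower}, hence gives a \emph{lower} bound on the increasing event $\cB_j^c$, the wrong direction.

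The fix is to keep the floor and work directly in $\Pi^{j-1}_{\Lambda(\gamma)}$, exactly as in \eqref{eq:27}. Since $j\ge H'+1=H+2$ one has $e^{-4\beta j}\le e^{-8\beta}/L$, and since $\Lambda(\gamma)$ sits inside a rectangle with shorter side $L$, the area term $C_0|\Lambda_{\gamma'}|e^{-4\beta j}$ in Proposition~\ref{p:starBound} for any $j$-contour $\gamma'$ is at most $|\gamma'|$. Hence the Peierls estimate $\Pi^{j-1}_{\Lambda(\gamma)}(\sC_{\gamma',j})\le e^{-(\beta/2)|\gamma'|}$ holds \emph{with} the floor, and summing over horizontal crossings of $B_j^1$ gives $\Pi^{j-1}_{\Lambda(\gamma)}(\cB_j^c)\le e^{-cL^{\delta}}=O(L^{-C})$. (The partial field $f_{x,j-1-H}$ can be removed by monotonicity, since $\cB_j^c$ is increasing.)

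A minor slip: on $\cE^c$ you only know $\gamma_j$ avoids $B_j^1$, so it may lie in $B_j^2$ and ${\rm dist}(B_j^1,\gamma)$ can be $1$, not $\ge\ell_2$. This is harmless since all you need is $B_j^1\subset\Lambda(\gamma)$.
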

Let $\gamma_j$ denote the unique open $j$-contour for a configuration
$\eta$ in the ensemble $\mu_{\rho_{j+1}}$.
By construction, $\gamma_j$ is to the left of $B_{j+1}^2$, and it may
intersect the rectangle $B_j^{2}$ or even $B_j^{1}$. Let $E_j$ denote
the event that $\gamma_j$ intersects $B_{j}^1$.
Conditionally on the event $E_j^c$, the contour stays to the right of
$B_j^1$, and the estimate $\mu_{\rho_{j+1}}(\mathcal{B}_j^c|E_j^c)=O(
\exp(-L^{\delta}))$ follows from (\ref{eq-27}), which is applicable
since the shorter side of $R$ is at most of length $L$. Thus the claim
(and hence Lemma~\ref{lem-1}) follows once we prove the following
lemma.
%
%
\begin{lemma}\label{Blemma}
For $j\geq{H'}+1$, uniformly in the vertical crossing $\rho_{j+1}$ in
$B_{j+1}^1$, and for all $C>0$:
%
%
\begin{equation}
\label{Blemma1} \mu_{\rho_{j+1}}(E_j)=O \bigl(L^{-C}
\bigr).
\end{equation}
\end{lemma}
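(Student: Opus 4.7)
The plan is to show that the unique open $j$-contour $\gamma_j$ enforced by the boundary conditions of $\mu_{\rho_{j+1}}$ is localized within a window of width $O(\sqrt{L}\,\mathrm{polylog}(L))$ around $\rho_{j+1}$, so that the event $E_j$ of $\gamma_j$ reaching the strip $B_j^1$ (at horizontal distance $\ell_2\sim L/\log^2 L$ from $\rho_{j+1}$) is a genuine large deviation with super-polynomially small probability. First, I would use the cluster expansion (Proposition \ref{dkspro}) combined with the compensation estimate (Proposition \ref{clam}) to represent the $\mu_{\rho_{j+1}}$-weight of any admissible realization $\gamma$ of $\gamma_j$ in the form
\[
\mu_{\rho_{j+1}}(\gamma_j=\gamma)\;\leq\;\exp\bigl(-\beta(|\gamma|-L_{\min})+\Psi(\gamma)\bigr),
\]
where $L_{\min}$ is the minimal length of an open contour with the prescribed endpoints on $\rho_{j+1}$, and the decoration $\Psi(\gamma)$ satisfies $|\Psi(\gamma)|\le\epsilon(\beta)|\gamma|$ with $\epsilon(\beta)\to 0$ as $\beta\to\infty$. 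The crucial point, supplied by Proposition \ref{clam}, is that the field term~\eqref{hfact} with coefficient $c_{j-1-H}=e^{-\beta(j-1-H)}$ dominates the area-gain $e^{-4\beta(j-H)}A(\gamma)/L$ coming from the entropic repulsion off the floor (since $j\ge H'+1=H+2$ and $\beta$ is large), so no area-dependent bias survives in $\Psi$.

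Second, I would address the irregularity of the right boundary of $\Lambda_{(j)}$, which is the arbitrary vertical crossing $\rho_{j+1}\subset B_{j+1}^1$ with transversal fluctuations up to $\ell_1=L^\delta$. Since the event $E_j$ is monotone in the domain, I would use monotonicity together with Lemma \ref{quasi-rect.1} to replace, at negligible cost, the rough boundary by a straight vertical segment; the extra $\exp(O(L^\delta))$ factor introduced is easily absorbed by the main exponential bound obtained below (since $L^\delta\ll L/\log^4L$). This guarantees that the cluster expansion is applicable uniformly in $\rho_{j+1}$, without any pinning of $\gamma$ to the rough boundary.

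Third, with the above effective representation, the open contour $\gamma$ behaves, at the level of area-free polymers, like a directed random walk bridge of length $\sim L$ with increments of bounded variance. A standard SOS-DKS deviation estimate (as in~\cite{DKS}) then gives
\[
\mu_{\rho_{j+1}}\bigl(\max_{x\in\gamma}\mathrm{dist}_{\mathrm{hor}}(x,\rho_{j+1})\ge d\bigr)\;\le\;\exp\bigl(-c\,d^2/L\bigr).
\]
Specializing to $d=\ell_2\asymp L/\log^2 L$ yields $\exp(-cL/\log^4 L)$, which is $O(L^{-C})$ for every fixed $C$ once $L$ is large. A union bound over the at most $L$ possible entry points of $\gamma_j$ into $B_j^1$ is absorbed with room to spare, proving~\eqref{Blemma1}.

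The hardest step will be the first: transferring the $\cite{DKS}$ cluster expansion to the present SOS setting with floor, external field, and non-straight right boundary, and in particular proving rigorously the compensation of entropic repulsion by the field (Proposition \ref{clam}). Once that technology is set up, the rest is a reasonably routine random-walk-type deviation estimate, and the use of monotonicity (Theorem \ref{th:PW}) together with Lemma \ref{quasi-rect.1} is what allows us to sidestep the pinning problem flagged in the Reader's Guide \ref{externalf}.
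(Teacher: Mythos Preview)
Your outline is the paper's route---reduce the floor and field effects via Proposition~\ref{clam}, then feed the resulting contour law into the random-walk large-deviation bound of Proposition~\ref{dkspro}---but two of your stated simplifications are not quite right.

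In step~1, Proposition~\ref{clam} does not make the area dependence vanish: it shows that the floor-plus-field correction equals $-\kappa\,|\Lambda_+|/L+O(L^\alpha)$ with $\kappa\in[\cK/2,\cK]>0$ and $\alpha<1$ (the field beats entropic repulsion, so the net area drift is \emph{negative}, not zero). Your displayed inequality for $\mu_{\rho_{j+1}}(\gamma_j=\gamma)$ hides the normalization; the paper handles it by writing the ratio~\eqref{Blemma40}, dropping the favorable negative exponent in the numerator, and applying Jensen in the denominator to reach~\eqref{Blemma51} with the extra factor $\exp(\cK\,\nu(|\Lambda_+|)/L)$, harmless because $\nu(|\Lambda_+|)=O(L^{3/2+\epsilon})$. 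Without this normalization step one only has the crude comparison~\eqref{Blemma4}, which carries an $e^{CL}$ prefactor and is used in the paper merely to restrict a priori to $|\Gamma|\le L^{1+\epsilon_0}$.

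Step~2 is unnecessary and, as written, misplaced: Proposition~\ref{dkspro} is formulated and proved directly for the no-floor, no-field measure $\nu$ on a domain whose boundary contour $\Gamma_*$ has transversal fluctuations at most $L^\delta$, which is exactly the situation of $\rho_{j+1}\subset B_{j+1}^1$ with $\ell_1=L^\delta$. There is no need to straighten $\rho_{j+1}$, and Lemma~\ref{quasi-rect.1} (existence of spanning chains) plays no role at this point in the argument.
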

We will actually give an upper bound of order $\exp(-L^{1-\varepsilon
})$ for
every $\varepsilon>0$.

\begin{pf*}{Proof of Lemma~\ref{Blemma}}
For this proof,\vspace*{1pt} the crossing $\rho_{j+1}$ in $B_{j+1}^1$ is fixed, and
we simply write $\mu$ instead of $\mu_{\rho_{j+1}}$. Fix a contour
$\Gamma$ and consider the event $\gamma_j=\Gamma$. Set $\Lambda
_+=\operatorname{Int}(\Gamma)\cap\Lambda_{(j)}$, and
$\Lambda_-=\Lambda_{(j)}\setminus\Lambda_+$, so that $\Gamma$ is the
set of dual bonds separating $\Lambda_-$ and $\Lambda_+$ within
$\Lambda_{(j)}$ [with $\operatorname{Int}(\Gamma)$ defined a few lines
after~(\ref{ubxn})]. For any $\Gamma$, one may write
%
%
\begin{equation}
\label{Blemma2} \mu(\gamma_j=\Gamma)\propto
e^{-\beta|\Gamma|}Z_{j,\Lambda_-}Z_{j,\Lambda_+}.
\end{equation}
Here, 
$Z_{j,\Lambda_-}$ (resp., $Z_{j,\Lambda_+}$) is the partition function
of the SOS model on $\Lambda_-$ (resp.,~$\Lambda_+$), with floor at
height $0$, field as in (\ref{hfact}), b.c. $j-1$ on
$\partial\Lambda_-$ (resp., b.c. $j$ on $\partial\Lambda_+$) and with
the extra constraint that $\eta_x\leq j-1$ for all
$x\in\Delta^-_\Gamma$ (resp., $\eta_x\geq j$ for all $x\in
\Delta^+_\Gamma$), where $\Delta^-_\Gamma$ (resp., $\Delta ^+_\Gamma$)
is the set of $x\in\Lambda_-$ either at distance 1 from $\Lambda_+$
(resp., $\Lambda_-$) or at distance $\sqrt2$ from a vertex
$y\in\Lambda_+$ (resp., $y\in\Lambda_-$) in the south west or north
east direction. These constraints are imposed by the definition of
$j$-contour; see Definition~\ref{contourdef}.

Next, let $ \mathcal{Z}_{\Lambda_-}^0$ (resp., $ \mathcal
{Z}_{\Lambda
_+}^0$) denote the partition function of the SOS model on $\Lambda_-$
(resp., $\Lambda_+$) with b.c. $0$, no floor and no external fields,
with the constraint that $\eta_x\leq0$ for all $x\in\Delta^-_\Gamma$
(resp., $\eta_x\geq0$, $x\in\Delta^+_\Gamma$). Let $\omega_-$,
$\omega_+$ be the corresponding Gibbs measures.
With these definitions, one rewrites (\ref{Blemma2}) as
%
%
\begin{eqnarray}
\label{Blemma3} \mu(\gamma_j=\Gamma)& =&\frac1{\mathcal{Z}}
e^{-\beta|\Gamma|} \mathcal{Z}_{\Lambda_-}^0\mathcal{Z}_{\Lambda_+}^0
\nonumber
\\
&&{}\times\omega_-
\bigl(e^{(\mathcal{K}/L)\sum_{x\in\Lambda_-}{\mathbf 1}_{\eta
_x\leq 0}}; \eta\geq-(j-1) \bigr)
\\
&&{}\times{} \omega_+ 
\bigl(e^{(\mathcal{K}/L)\sum_{x\in\Lambda_+}{\mathbf1}_{\eta
_x\leq -1}}; \eta\geq- j \bigr),
\nonumber
\end{eqnarray}
where $\mathcal{Z}$ is the normalization and
${\mathcal{K}}=c_{j-1-H}=\exp(-\beta(j-1-H))$, see
(\ref{eq-10}).\vadjust{\goodbreak}

We first observe that
the very same arguments of 
Proposition~\ref{p-heightBounds}
proves that 
for all $x\in\Lambda_\pm$,
%
%
\begin{equation}
\label{ekub} \omega_\pm(\eta_x\geq j)\asymp
e^{-4\beta j},
\end{equation}
that is, $C^{-1}e^{-4\beta j} \leq\omega_\pm(\eta_x\geq j)\leq C
e^{-4\beta j}$ for some absolute constant $C>0$. This is possible
thanks to the fact that even in the presence of the constraints on
$\Delta^\pm_\Gamma$ the arguments of Lemma~\ref{l:contourBound} can be
used without modifications. Next, let $\hat\pi$ stand for the infinite
volume limit of the SOS measure with zero boundary condition.
Proposition~\ref{p-heightBounds} implies that $\hat\pi(\eta_0\geq
j)\asymp e^{-4\beta j}$. Moreover, we observe that there exist
constants $c,t_0>0$ such that for any $x\in\Lambda_\pm$ at distance at
least $t>t_0$ from the boundary $\partial\Lambda_\pm$, for any $k$:
%
%
\begin{equation}
\label{ekubdec} \bigl\llvert\omega_\pm(\eta_x\geq k) -
\hat\pi(\eta_0\geq k) \bigr\rrvert\leq e^{-c t}.
\end{equation}
Let us prove (\ref{ekubdec}) in the case $x\in\Lambda_-$. The case
$x\in\Lambda_+$ is obtained with the same argument. Thanks to the
exponential decay of correlations for the $0$-b.c. SOS model at large
$\beta$ (see \cite{BW}), (\ref{ekubdec}) is equivalent to the statement
obtained by replacing $\hat\pi$ by $\hat\pi{}_{\Lambda_-}^0$.
Observe that by monotonicity $\omega_-(\eta_x\geq k)\leq
\hat\pi{}_{\Lambda_-}^0(\eta_x\geq k)$. Next, by the same argument of
Lemma~\ref{l:contourBound}, the $\omega_-$-probability of a contour
$\gamma$, is bounded above by $e^{-\beta|\gamma|}$, and thus with
probability at most $e^{-c t}$ there is no chain $\mathcal{C}$ of
heights all greater or equal to zero in a shell of width $t/2$ around
$x$, and at distance larger than $t/2$ from $x$. On the other hand, if
$E$ is the event that such a chain exists then by monotonicity and
decay of correlations one has $ \omega_-(\eta_x\geq k;
E)\geq\hat\pi{}_{\Lambda_-}^0(\eta_x\geq k) +e^{-c t}$. This proves
(\ref{ekubdec}).

We turn to a rough estimate that allows one to rule out very long
contours. Namely, if $G$ denotes the event that $|\gamma_j|\leq
L^{1+\varepsilon_0}$, then for all $\beta$ large enough
%
%
\begin{equation}
\label{Blemmagep} \mu(G) = 1-O \bigl(e^{-L^{1+\varepsilon_0}} \bigr).
\end{equation}
In what follows, we may fix $\varepsilon_0>0$ as small as we wish. To prove
(\ref{Blemmagep}), observe that from a trivial bound on the external
fields and the FKG property for $\omega_\pm$
one has
%
%
\begin{equation}
\label{Blemma404}
\qquad\omega_- 
\bigl(e^{(\mathcal{K}/L)\sum_{x\in\Lambda_-}{\mathbf1}_{\eta
_x\leq0}}; \eta\geq- (j-1)
\bigr) \geq\prod_{x\in\Lambda_-} \omega_- 
\bigl(
\eta_x\geq-(j-1) \bigr).
\end{equation}
From (\ref{ekub})
\[
\omega_- \bigl(\eta_x\geq-(j-1) \bigr)\geq \bigl(1-ce^{-4\beta(j-1)}
\bigr)\geq\exp \bigl(-c'/L \bigr),
\]
since $j\geq H$. Then (\ref{Blemma404}) is bounded below by $e^{-CL}$
for some $C>0$. The same estimate holds for the last term in
(\ref{Blemma3}), and therefore one has
%
%
\begin{equation}
\label{Blemma4} \mu(\gamma_j=\Gamma)\leq\nu(\Gamma)
e^{ CL}
\end{equation}
for some constant $C>0$, where $\nu$ is the probability measure on
contours $\Gamma$ given~by
%
%
\begin{equation}
\label{dksnu} \nu(\Gamma)\propto e^{-\beta|\Gamma|} \mathcal {Z}_{\Lambda_-}^0
\mathcal{Z}_{\Lambda_+}^0.
\end{equation}
Notice that $\nu$ is the distribution of the unique open contour of the
SOS measure on $\Lambda_-\cup\Lambda_+$ with no floor constraint, with
Dobrushin boundary conditions, namely with b.c. $\eta_y=0$ or
$\eta_y=1$ depending on whether $y$ has a nearest neighbor in
$\Lambda_-$ or in $\Lambda_+$, respectively. It follows from
(\ref{expan1}) that $\nu(\Gamma)$ has the standard form
\[
\nu(\Gamma)\propto e^{-\beta|\Gamma|+\Psi(\Gamma)},
\]
where the decoration term $\Psi$ satisfies $|\Psi(\Gamma)|\leq
ce^{-\beta}|\Gamma|$. The usual Peierls' argument shows that
%
%
\begin{equation}
\label{ggep} \nu \bigl(|\gamma|\geq L^{1+\varepsilon_0} \bigr) = O
\bigl(e^{-L^{1+\varepsilon_0}} \bigr)
\end{equation}
and (\ref{Blemmagep}) follows.

Thanks to (\ref{Blemmagep}), we can now restrict the summation in the
normalization $\mathcal{Z}$ in (\ref{Blemma3}) to contours $\Gamma
\in G$. Define
%
%
\begin{eqnarray}
\Phi_- &:=& \frac{\mathcal{K}}L
|\Lambda_-| \hat\pi(
\eta_0> 0), 
\qquad\Psi_-:=
|\Lambda_-| \hat
\pi \bigl( \eta_0< -(j-1) \bigr),\label{deffi}
\\
\Phi_+&:=&\frac{\mathcal{K}}L 
|\Lambda_+| \hat\pi(\eta_0
\leq-1), 
\qquad q\Psi_+:=
|\Lambda_+| \hat\pi(
\eta_0< -j). \label{deffit}
\end{eqnarray}

%
%
\begin{proposition}\label{clam}
There exists $\alpha<1$ such that for all $\Gamma\in G$ one has the expansions
(with error terms uniform in $\Gamma\in G$):
%
%
\begin{eqnarray}
&& \omega_- \bigl(e^{(\mathcal{K}/L)\sum_{x\in
\Lambda _-}{\mathbf 1}_{\eta_x\leq 0}}; \eta\geq- (j-1) \bigr)
\nonumber\\[-8pt] \label{Blemma30} \\[-8pt]
&&\qquad = \exp \bigl(\mathcal{K}|\Lambda_-|/L -\Phi_- -\Psi_- + O \bigl(L^{\alpha}
\bigr) \bigr)\nonumber
\\
&& \omega_+ \bigl(e^{(\mathcal{K}/L)\sum_{x\in\Lambda_+}{\mathbf
1}_{\eta_x\leq-1}}; \eta\geq- j \bigr)
\nonumber\\[-8pt]\label{Blemma31} \\[-8pt]
&&\qquad = \exp \bigl( \Phi_+ - \Psi_+ + O \bigl(L^{\alpha} \bigr) \bigr).\nonumber
\end{eqnarray}
\end{proposition}

Let us conclude the proof of Lemma~\ref{Blemma} assuming for the moment
the validity of Proposition~\ref{clam}.
First, observe 
that the functions in (\ref{deffi}) and (\ref{deffit}) satisfy, for
some $\alpha<1$, uniformly in $\Gamma\in G$:
%
%
\begin{eqnarray}
\label{Blemma32} -\Phi_-+\Phi_+ &=&\frac{\mathcal{K}}L \bigl(|\Lambda _+|-|\Lambda_-|
\bigr)\hat\pi(\eta_0> 0), 
\\
\qquad\Psi_-+\Psi_+ &=& |\Lambda_-| \hat\pi(\eta_0\geq j )+|
\Lambda_+| \hat\pi(\eta_0\geq j+1). 
\label{Blemma323}
\end{eqnarray}

From~(\ref{Blemma30})--(\ref{Blemma323}), setting $|\Lambda_{(j)}|=
|\Lambda_-|+|\Lambda_+|$, $\delta_k(\beta)=\hat\pi(\eta_0\geq k)$ and
$\bar\delta_k(\beta)=L\delta_k(\beta)$:
\begin{eqnarray*}
&& \omega_- \bigl(e^{(\mathcal{K}/L)\sum_{x\in\Lambda
_-}{\mathbf 1}_{\eta _x\leq 0}}; \eta\geq- (j-1) \bigr) \omega_+
\bigl(e^{(\mathcal{K}/L)\sum_{x\in\Lambda_-}{\mathbf 1}_{\eta
_x\leq -1}}; \eta\geq- j \bigr)
\\
&&\qquad = \exp \biggl( \frac{|\Lambda_{(j)}|}L \bigl(\mathcal{K} \bigl(1-\delta
_1(\beta) \bigr)-\bar\delta_{j}(\beta) \bigr)
\\
&&\hspace*{54pt}{} +\frac{|\Lambda_+|}L \bigl(\mathcal{K} \bigl(-1+2\delta_1(\beta)
\bigr)+ \bar\delta_j(\beta)-\bar\delta_{j+1}(\beta) \bigr)
+ O \bigl(L^{\alpha} \bigr) \biggr).
\end{eqnarray*}
Observe that
\[
\mathcal{K}=e^{-\beta(j-1-H)}\gg e^{-4\beta(j-1-H)}=L e^{-4\beta
(j-1)}\asymp\bar
\delta_{j-1}(\beta).
\]
Therefore, for large $\beta$ one sees that
\[
-\mathcal{K}\leq\mathcal{K} \bigl(-1+2\delta_1(\beta) \bigr)+\bar
\delta_j(\beta)-\bar\delta_{j+1}(\beta)\leq- \mathcal{K}/2.
\]
Since the term proportional to $|\Lambda_{(j)}|$ is independent of
$\Gamma$, it
plays no role in (\ref{Blemma3}). Therefore,
%
%
\begin{equation}
\label{Blemma40} \quad\mu(E_j|G)\leq\exp \bigl(O \bigl(L^\alpha
\bigr) \bigr)\times\frac{\sum_{\Gamma\in E_j\cap
G}\nu(\Gamma)
\exp( -(\mathcal{K}/2L)|\Lambda_+| )} {
\sum_{\Gamma\in G}\nu(\Gamma)
\exp( -(\mathcal{K}/L)|\Lambda_+| )},
\end{equation}
where we recall that $E_j$ is the event in (\ref{Blemma2}), $G$ the
event in~(\ref{Blemmagep}) and $\nu$ the measure in~(\ref{dksnu}). At
this point an upper bound on $\mu(E_j|G)$ follows
from~(\ref{Blemma40}) by neglecting the negative exponent in the
numerator and using Jensen's inequality for the denominator. Using
also~(\ref{ggep}), this gives
%
%
\begin{equation}
\label{Blemma51} \mu(E_j|G)\leq\nu(E_j) \exp \biggl(
\frac{\mathcal{K}}{L}\nu \bigl(|\Lambda_+| \bigr) + O \bigl(L^{\alpha} \bigr) \biggr).
\end{equation}
%
It follows then from Proposition~\ref{dkspro} that for every $\beta$
sufficiently large, for all $\varepsilon>0$, if $L$ is large enough:
%
%
\begin{equation}
\label{Blemma5} \nu(E_j)\leq\exp \bigl( -L^{1-\varepsilon} \bigr).
\end{equation}
Essentially, under $\nu$ the contour $\Gamma$ behaves like a random walk
and the event $E_j$ imposes a large deviation of order $L/(\log L)^2$
which is much larger than the typical diffusive fluctuation $\sqrt L$.
Moreover, again from Proposition~\ref{dkspro} one has
%
%
\begin{equation}
\label{Blemma52} \nu \bigl(|\Lambda_+| \bigr)=O \bigl(L^{(3/2)+\varepsilon} \bigr).
\end{equation}
Then~(\ref{Blemma51}), (\ref{Blemma5}) and~(\ref{Blemma52}) end the
proof of Lemma~\ref{Blemma}.
\end{pf*}

\begin{pf*}{Proof of Proposition~\ref{clam}}
Let us start with the lower bounds. Using first Jensen's inequality and
then the FKG property for $\omega_-$ one has
%
%
\begin{eqnarray}
\label{Blemma44} \qquad &&\omega_- \bigl(e^{(\mathcal{K}/L)\sum
_{x\in\Lambda_-}{\mathbf 1}_{\eta
_x\leq
0}}; \eta\geq- (j-1) \bigr)
\nonumber
\\
&&\qquad\geq\exp \biggl[\frac{\mathcal{K}}L\sum_{x\in\Lambda
_-}\omega_- \bigl(\eta_x\leq0|\eta\geq- (j-1) \bigr) \biggr]\omega_-
\bigl(\eta\geq- (j-1) \bigr)
\\
&&\qquad\geq\exp \biggl[\frac{\mathcal{K}}L |\Lambda_-| - \tilde \Phi_--\tilde
\Psi_- \biggr],
\nonumber
\end{eqnarray}
where
\begin{eqnarray*}
\tilde\Phi_-&:=&\frac{\mathcal{K}}L\sum_{x\in\Lambda_-}
\omega_- \bigl(\eta_x> 0|\eta\geq- (j-1) \bigr),
\\
\tilde\Psi_-&:=&-\sum_{x\in\Lambda_-}\log \bigl(1-\omega_-
\bigl( \eta_x< -(j-1) \bigr) \bigr).
\end{eqnarray*}
Similarly,
%
%
\begin{equation}
\label{Blemma55} \omega_+ \bigl(e^{(\mathcal{K}/L)\sum_{x\in
\Lambda_+}{\mathbf
1}_{\eta
_x\leq
-1}}; \eta\geq- j \bigr) \geq\exp[
\tilde\Phi_+ -\tilde\Psi_+ ],
\end{equation}
where
\begin{eqnarray*}
\tilde\Phi_+&:=&\frac{\mathcal{K}}L\sum_{x\in\Lambda_+}\omega
_+(\eta_x\leq-1|\eta\geq-j),
\\
\tilde\Psi_+&:=&-\sum_{x\in\Lambda_+}\log \bigl(1-\omega_+(
\eta_x< -j) \bigr).
\end{eqnarray*}
%
From (\ref{ekubdec}) one sees that 
both $|\Psi_-- \tilde\Psi_-|$ and $|\Psi_+- \tilde\Psi_+|$ are
$O(L^\alpha)$, for some $\alpha<1$, uniformly in $\Gamma\in G$.
Therefore, the
lower bound in (\ref{Blemma30}) follows once we establish that on $G$
%
%
\begin{equation}
\label{Blemma5550} |\Phi_\pm- \tilde\Phi_\pm| = O
\bigl(L^\alpha \bigr)
\end{equation}
for some $\alpha>0$. To prove (\ref{Blemma5550}), we use the following
comparison estimate. Let us consider the case $|\Phi_+-\tilde\Phi_+|$.
By FKG, one has $\omega_+(\eta_x\leq-1|\eta\geq-j)\leq
\omega_+(\eta_x\leq-1)$. On the other hand, whenever $x\in\Lambda_-$
is at
distance at least $L^\delta$, for some $\delta>0$, from $\partial
\Lambda
_+$, then
we claim that
%
%
\begin{equation}
\label{Blemma555} \omega_+(\eta_x\leq-1) \leq\omega_+(
\eta_x\leq-1|\eta\geq-j) + O \bigl(L^{\alpha-1} \bigr).
\end{equation}
These observations and (\ref{ekubdec}) are sufficient to prove
(\ref{Blemma5550}). In turn, (\ref{Blemma555}) is a~consequence of the
technique developed below, cf. the comment after~(\ref{Blef}).

To prove the upper bounds in (\ref{Blemma30}) and (\ref{Blemma31}),
observe that from the FKG property of $\omega_\pm$ one has
\begin{eqnarray*}
&& \omega_- \bigl(e^{(\mathcal{K}/L)\sum_{x\in\Lambda
_-}{\mathbf 1}_{\eta_x\leq 0}}; \eta\geq- (j-1) \bigr)
\\[-3pt]
&&\qquad \leq\omega_- \bigl(e^{(\mathcal{K}/L)\sum_{x\in\Lambda_-}{\mathbf
1}_{\eta _x\leq 0}} \bigr) \omega_- \biggl(\prod_{x\in\Lambda_-}{
\mathbf1}_{\eta_x\geq- (j-1)} \biggr),
\\[-3pt]
&& \omega_+ \bigl(e^{(\mathcal{K}/L)\sum_{x\in\Lambda_+}{\mathbf
1}_{\eta_x\leq-1}}; \eta\geq- j \bigr)
\\[-3pt]
&&\qquad \leq\omega_+
\bigl(e^{(\mathcal{K}/L)\sum_{x\in\Lambda_+}{\mathbf
1}_{\eta_x\leq-1}} \bigr) \omega_+ \biggl(\prod_{x\in\Lambda_+}{
\mathbf1}_{\eta_x\geq- j} \biggr).
\end{eqnarray*}
Rewriting
\begin{eqnarray*}
\omega_- \bigl(e^{(\mathcal{K}/L)\sum_{x\in\Lambda
_-}{\mathbf
1}_{\eta
_x\leq
0}} \bigr) 
&=&\exp \biggl(\frac{\mathcal{K}}L |\Lambda_-| \biggr)\omega_-
\biggl(\prod_{x\in\Lambda
_-}(1-\varphi_x) \biggr),
\end{eqnarray*}
where $\varphi_x:=1- e^{-(\mathcal{K}/L){\mathbf1}_{\eta_x> 0}}$, and
setting $\psi_x={\mathbf1}_{\eta_x<-(j-1)}$ the bound (\ref{Blemma30})
is then implied by
%
%
\begin{eqnarray}
\label{Blemma6} \omega_- \biggl(\prod_{x\in\Lambda_-}(1-
\varphi_x) \biggr)&\leq&\exp \bigl(-\Phi_- + O \bigl(L^{\alpha}
\bigr) \bigr),
\\[-3pt]
\label{Blemma61}\omega_- \biggl(\prod_{x\in\Lambda_-}(1-\psi_x)
\biggr)&\leq&\exp \bigl(-\Psi_- + O \bigl(L^{\alpha} \bigr) \bigr).
\end{eqnarray}
Similarly, the bound (\ref{Blemma31}) is implied by
%
%
\begin{eqnarray}
\label{Ble60} \omega_+ \biggl(\prod_{x\in\Lambda_+}(1-\bar
\varphi_x) \biggr)&\leq&\exp \bigl(\Phi_+ + O \bigl(L^{\alpha}
\bigr) \bigr),
\\
\omega_+ \biggl(\prod_{x\in\Lambda_+}(1-\bar
\psi_x) \biggr)&\leq&\exp \bigl(-\Psi_+ + O \bigl(L^{\alpha}
\bigr) \bigr) \label{Ble610}
\end{eqnarray}
with the notation $\bar\varphi_x:=1- e^{(\mathcal{K}/L){\mathbf
1}_{\eta_x< 0}}$, and $\bar\psi_x={\mathbf1}_{\eta_x<-j}$. Below, we
establish~(\ref{Blemma6})--(\ref{Ble610}) and~(\ref{Blemma555}). All
these estimates can be achieved once one has an approximate
factorization of the measure $\omega_+$ on a mesoscopic scale $L^u$,
$u\in(0,\frac12)$. To illustrate this point, consider the expression
(\ref{Blemma6}), and suppose the product is confined to $Q_u$, a square
with side $L^u$, contained in $\Lambda_-$. Then
%
%
\begin{eqnarray}
\label{Blemma7} \omega_- \biggl(\prod_{x\in Q_u}(1-
\varphi_x) \biggr)&=&\sum_{A\subset
Q_u}(-1)^{|A|}
\omega_- \biggl(\prod_{x\in A}\varphi_x
\biggr)
\nonumber
\\
&=&1-\sum_{x\in Q_u}\omega_-(\varphi_x)+O
\biggl(\sum_{k\geq2}\pmatrix{L^{2u}
\vspace*{2pt}
\cr
k}L^{-k} \biggr)
\\
&\leq&\exp \biggl(- \frac{\mathcal{K}}L\sum_{x\in Q_u}
\omega_-(\eta_x>0)+ O \bigl(L^{2(2u-1)} \bigr) \biggr),
\nonumber
\end{eqnarray}
where we have separated the contributions of sets $A$ with $|A|\leq1$
and $|A|\geq2$, and used the fact that $\varphi_x=\frac{\mathcal
{K}}L{\mathbf1}_{\eta_x> 0} + O(L^{-2})$. In particular if one could
factorize (\ref{Blemma6}) into a product of (\ref{Blemma7}) over all
$Q_u\subset\Lambda_-$, then the desired bound would follow using also
(\ref{ekubdec}).

To implement this idea, we use the following geometric construction.
Partition $\mathbb{Z}^2$ into squares $P$ with side $r=L^u+2L^\delta$,
where $0<\delta<u<\frac12$ (we assume for simplicity that
$L^u,L^\delta$ are both integers). Consider squares $Q$ of side $L^u$
centered inside the squares $P$ in such a way that each square $Q$ is
surrounded within $P$ by a shell of thickness $L^\delta$, see
Figure~\ref{shell}. Define the set $\mathcal{S}$ of dual bonds
associated to a nonzero height gradient, cf. Appendix~\ref{clusters}.
The set $\mathcal{S}$ is decomposed into connected components
(clusters) $S$.
We call $\mathcal{I}(\delta)$ the collection of clusters $S$ in
$\mathcal{S}$ such that
$|S|\geq L^\delta$. Note that a cluster may have a nonempty interior.

Consider the set of sites $V\subset\Lambda_-$ defined as 
what remains after we remove from $\Lambda_-$ all clusters $S$ in
$\mathcal{I} (\delta)$ together with their interior. A square
$Q\subset
V$ is called \textit{good} if the square $P\supset Q$ has empty
intersection with $\partial\Lambda_-\cup\mathcal{I}(\delta)$;
see Figure~\ref{shell}. We write $\mathcal{G}$ for the collection of
good squares $Q$. The crucial observation is that if $Q\in\mathcal{G}$,
then there exists a circuit $\mathcal{C}$ of bonds of $\mathbb{Z}^2$
surrounding $Q$ and contained in the square $P\supset Q$, such that
$\eta_\mathcal{C}\equiv0$. To see this, observe that there must be a
circuit $\mathcal{C}$ of bonds surrounding $Q$ such that gradients of
$\eta$ along the circuit are~$0$, since otherwise there would be a path
of dual bonds connecting $Q$ with $P^c$ with cross-gradients different
from zero, and therefore a cluster $S$ with size larger than~$L^\delta$
intersecting $P$. Now, this implies that $\eta$ is constant on
$\mathcal{C}$, and this constant must be zero, since otherwise $Q$
would belong to the interior of a cluster $S$ of size larger than
$L^\delta$ because of the zero boundary condition on
$\partial\Lambda_-$.
%
%
\begin{figure}

\includegraphics{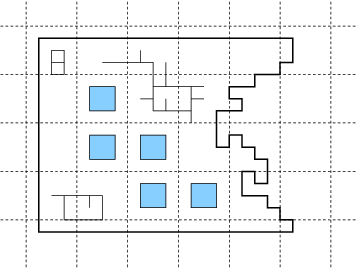}

\caption{A drawing of the region $\Lambda_-$. In the background the
squares $P$ (dashed lines). The clusters inside represent the set
$\mathcal{I} (\delta)$ and the shaded squares are the set $\mathcal
{G}$ of good squares~$Q$.}\label{shell}
\end{figure}

Next, we estimate $1-\varphi_x\leq1$ for all $x$ which do not belong
to some $Q\in\mathcal{G}=\mathcal{G}(\mathcal{I}(\delta))$.
Therefore, summing over all
possible realizations $W$ of~$\mathcal{I}(\delta)$:
%
%
\begin{eqnarray}\label{Blemma70}
&& \omega_- \biggl(\prod_{x\in\Lambda_-}(1-
\varphi_x) \biggr)\nonumber
\\
&&\qquad \leq\sum_{W} \omega_- \bigl(\mathcal{I}( \delta)=W \bigr) \omega_-
\biggl(\prod _{Q\in\mathcal{G}}\prod_{x\in Q}(1-\varphi _x)|\mathcal{I}
( \delta)=W \biggr)
\\
&&\qquad \leq\sum_{W}\omega_- \bigl(\mathcal{I}(
\delta)=W \bigr) \prod_{Q\in\mathcal{G}}\sup_{\mathcal{C}}
\hat\pi{}_{\mathcal{C}}^0 \biggl(\prod
_{x\in Q}(1- \varphi_x) \biggr),\nonumber
\end{eqnarray}
where, for an arbitrary circuit $\mathcal{C}$ surrounding $Q$ within
the square $P\supset Q$ and with a slight abuse of notation, we write
$\hat \pi_{\mathcal{C}}^0$ for the SOS equilibrium measure on the
interior of the circuit $\mathcal{C}$ with zero boundary conditions
(without floor, ceiling and no fields). With the same argument of
(\ref{Blemma7}) one has, uniformly in $\mathcal{C}$
%
%
\begin{equation}
\label{Blemma9} \hat\pi{}_{\mathcal{C}}^0 \biggl(\prod
_{x\in Q}(1-\varphi_x) \biggr)\leq\exp \biggl(-
\frac{\mathcal{K}}L\sum_{x\in Q}\hat\pi{}_{\mathcal{C}}^0(
\eta_x>0)+ O \bigl(L^{2(2u-1)} \bigr) \biggr).
\end{equation}
Let $\mathcal{I}(\delta)=W$ be fixed. For any fixed square $Q\in
\mathcal{G}$, let $Q'\subset Q$ be the square centered inside $Q$ in
such a way that $Q'$ is surrounded by a shell of thickness $L^\delta$
within $Q$. Thus, if $x\in Q'$, then $x$ is at distance at least
$L^\delta$ from $\mathcal {C}$, and therefore as in (\ref{ekubdec}),
for any $p>0$, uniformly in $\mathcal{C}$:
%
%
\begin{equation}
\label{Blemma10}
\qquad \hat\pi{}_{\mathcal{C}}^0(\eta_x>0) =
\hat\pi(\eta_x>0) +O \bigl(L^{-p} \bigr),\qquad x\in Q'.
\end{equation}
From (\ref{Blemma10}),
%
%
\begin{eqnarray}\label{Blemma99}
\qquad && \hat\pi{}_{\mathcal{C}}^0 \biggl(\prod
_{x\in Q}(1-\varphi_x) \biggr)
\nonumber\\[-8pt]\\[-8pt]
&&\qquad \leq\exp \biggl(-\frac{\mathcal{K}}L\sum_{x\in Q'}\hat\pi(
\eta_x>0)+ O \bigl(L^{2(2u-1)} \bigr) + O \bigl(L^{-p+2u-1}
\bigr) \biggr).\nonumber
\end{eqnarray}
There are at most $O(L^{2-2u})$ squares $Q$. Therefore, from
(\ref{Blemma70}) one obtains
%
%
\begin{eqnarray}\label{Blemma71}
\qquad && \omega_- \biggl(\prod_{x\in\Lambda_-}(1-\varphi_x)
\biggr)
\nonumber\\[-8pt]\\[-8pt]
&&\qquad \leq\sum_{W}\omega_- \bigl(\mathcal{I}(
\delta)=W \bigr) \exp \biggl(- \frac{\mathcal{K}}L\sum
_{Q\in\mathcal{G}}\sum_{x\in
Q'}\hat\pi(\eta_x>0)+O \bigl(L^{2u} \bigr) \biggr).\nonumber
\end{eqnarray}
Next, we need to add back the contributions to the exponent in
(\ref{Blemma71}) from all removed vertices, where each vertex
contributes at most $1/L$. The contribution of a single removed shell
$P\setminus Q'$ is $O(L^{\delta+u-1})$, and they are at most
$O(L^{2-2u})$, so~that all removed shells give at most
$O(L^{1+\delta-u})=O(L^\alpha )$ for $\alpha<1$ since $u>\delta$. To
estimate the contribution from all other removed sites, we observe that
a site can be removed if it belongs to a square $P$ that intersects
either the boundary $\partial\Lambda_-$ or the clusters of
$\mathcal{I}(\delta)$, or if it belongs to the interior of a cluster of
$\mathcal{I}(\delta)$. If $A(\mathcal{I}(\delta))$ denotes the total
number of sites in the interior of the clusters
$S\in\mathcal{I}(\delta)$, then these contribute at most
$\mathcal{K}L^{-1}\times A(\mathcal{I}(\delta))$. Moreover, one has at
most $L^{2u}\times |\mathcal{I}(\delta)|$ sites that can be removed
from intersections with $\mathcal{I} (\delta)$. These contribute at
most $\mathcal{K}L^{2u-1} |\mathcal{I}(\delta )|$. Finally, one
estimates roughly by $L^{2u}|\partial\Lambda_-|=O(L^{2u+1+\varepsilon
_0})$ the number of sites removed from squares intersecting
$\partial\Lambda_-$, since on the event $G$ one has $|\partial\Lambda
_-|=O(L^{1+\varepsilon_0})$. Thus, the contribution from the boundary
squares is $O(L^{2u+\varepsilon_0})=O(L^\alpha)$, if
$2u+\varepsilon_0<1$. Therefore, using $\mathcal{K} \leq 1$:
%
%
\begin{eqnarray}\label{Blemma72}
&& \frac{\mathcal{K}}L\sum_{Q\in\mathcal{G}}\sum_{x\in Q'}\hat\pi(\eta_x>0)
\nonumber\\[-8pt]\\[-8pt]
&&\qquad \geq\Phi_- - L^{-1} A \bigl(\mathcal{I}(\delta) \bigr)-L^{2u-1} \bigl|
\mathcal{I}( \delta) \bigr| + O \bigl(L^{\alpha} \bigr).\nonumber
\end{eqnarray}
Thus, we have obtained
%
%
\begin{eqnarray}\label{Blemma73}
\qquad&& \omega_- \biggl(\prod_{x\in\Lambda_-}(1-
\varphi_x) \biggr)
\nonumber\\[-8pt]\\[-8pt]
&&\qquad \leq\exp \bigl(- \Phi_- + O \bigl(L^{\alpha}
\bigr) \bigr) \omega_- \bigl[\exp \bigl(L^{-1} A \bigl(\mathcal{I}(
\delta) \bigr) + L^{2u-1} \bigl|\mathcal{I}(\delta) \bigr| \bigr) \bigr].\nonumber
\end{eqnarray}
If $\{S_i\}_{i=1}^m$ denotes the collection of clusters of $ \mathcal{I}
(\delta)$,
with $|\mathcal{I}(\delta)|=\sum_i|S_i|$, then
%
%
\begin{equation}
\label{isopero} A \bigl(\mathcal{I}(\delta) \bigr)\leq\frac 14\sum
_i|S_i|^2\leq\frac14 \biggl(\sum
_i|S_i| \biggr)^2,
\end{equation}
where the last bound follows from $\sum_i x_i^2\leq(\sum_i x_i)^2$ for
all $x_i\geq0$. Recalling that $|S_i|\geq L^\delta$ for all $i$, using
(\ref{peierls_c1}), letting $m$ represent the number of clusters
$S_1,\ldots,S_m$, and summing over their starting points
$x_1,\ldots,x_m$, one has the estimate
\[\hspace*{-10pt}
\omega_- \biggl(\sum_i|S_i| \geq k
\biggr) \leq\sum_{m\geq1}\sum
_{x_1,\ldots,x_m}\sum_{S_1\ni x_1}\cdots\sum
_{S_m\ni x_m}C e^{-\beta(|S_1|+\cdots+|S_m|)/2}\chi(S_1,
\ldots,S_m),
\]
where $\chi(S_1,\ldots,S_m)=1$ if $|S_i|\geq L^\delta$ for all
$i=1,\ldots,m$ and $|S_1|+\cdots+|S_m|\geq k$, and
$\chi(S_1,\ldots,S_m)=0$ otherwise. Therefore,
%
%
\begin{eqnarray}
\label{peierls_c2} \omega_- \biggl(\sum
_i|S_i| \geq k \biggr)&\leq& C
e^{-\beta k/4} \sum_{m\geq1} \biggl( \sum
_{x}\sum_{S\ni x, |S|\geq L^\delta}
e^{-\beta|S|/4} \biggr)^m
\nonumber
\\
&\leq& C e^{-\beta k/4}\sum_{m\geq1}
L^{2m} \biggl(\sum_{j\geq
L^\delta
}C^je^{-\beta j/4}
\biggr)^m
\\
&\leq& e^{-\beta k/4} \nonumber
\end{eqnarray}
for any $\beta$ large enough and for all $L$ sufficiently large. From
(\ref{isopero}) and (\ref{peierls_c2}), one has
\[
\omega_- \bigl(A \bigl(\mathcal{I}(\delta) \bigr)\geq\ell \bigr)\leq\omega_-
\biggl( \sum_i|S_i|\geq2\sqrt\ell
\biggr)\leq e^{-\beta\sqrt{\ell}/2}
\]
for all $\ell>0$ and therefore
%
%
\begin{equation}
\label{isopero1} \omega_- \bigl[\exp \bigl(2 L^{-1} A \bigl(
\mathcal{I}(\delta) \bigr) \bigr) \bigr] \leq\sum_{\ell=0}^{L^2}
\exp{(2\ell/L-\beta\sqrt{\ell}/4)}\leq L^2+1,
\end{equation}
since $2\ell/L\leq\beta\sqrt{\ell}/4$, for $\beta$ large and
$\ell\leq
L^2$. Using (\ref{isopero1}), a Cauchy--Schwarz inequality and
(\ref{peierls_c2}), it follows that
%
%
\begin{equation}
\label{Blef} \omega_- \bigl[\exp \bigl(L^{-1} A \bigl(\mathcal{I}(
\delta) \bigr) + L^{2u-1}\bigl|\mathcal{I} (\delta)\bigr| \bigr) \bigr]\leq CL\leq
e^{O(L^\alpha)}
\end{equation}
for any $\alpha<1$. This ends the proof of (\ref{Blemma6}). To prove
(\ref{Ble60}) one repeats the same argument with the region $\Lambda_-$
replaced by $\Lambda_+$.

We turn to the proof of the estimates (\ref{Blemma61}) and
(\ref{Ble610}). A minor modification of the same argument proves also
the inequality (\ref{Blemma555}). Here one has to replace the expansion
(\ref{Blemma9}) by the following bound:
%
%
\begin{eqnarray}\label{Blemma90}
&& \hat\pi{}_{\mathcal{C}}^0 \biggl(\prod
_{x\in Q}(1-\psi_x) \biggr)
\nonumber\\[-8pt]\\[-8pt]
&&\qquad \leq\exp \biggl(- \sum_{x\in Q} \hat\pi{}_{\mathcal{C}}^0(
\psi_x)+ O \bigl(L^{-(3/2)+2u+c(\beta)} \bigr)+O \bigl(L^{6u-3}\bigr) \biggr),\nonumber
\end{eqnarray}
where $\psi_x={\mathbf1}_{\eta_x\geq j}$ and $c(\beta)>0$ can be made
arbitrarily small by taking $\beta$ large enough. Once this estimate
(together with the corresponding statement for $\bar\psi_x={\mathbf
1}_{\eta_x>j}$) is available, it is not hard to check that exactly the
same arguments we used to prove (\ref{Blemma6}) and (\ref{Ble60}) allow
one to conclude. Here the term $\mathcal{K}L^{-1}\hat\pi(\eta_x>0)$
appearing in (\ref{Blemma72}) must be replaced by $\hat\pi(\eta _x\geq
j)$, which (thanks to $j\geq H+1$) is again less than $L^{-1}$ for
$\beta$ large enough by (\ref{ekub}). In particular, one can use the
argument in (\ref{isopero1})--(\ref{Blef}) to conclude as above.

It remains to prove (\ref{Blemma90}). We cannot proceed as in
(\ref{Blemma7}) since $\psi_x$ is not pointwise $O(1/L)$. From
Bonferroni's inequality (inclusion--exclusion principle), one has
%
%
\begin{eqnarray}
\label{Blemma07} \hat\pi{}_{\mathcal{C}}^0 \biggl(\prod
_{x\in Q}(1- \psi_x) \biggr)&=&\sum
_{A\subset Q}(-1)^{|A|} \hat\pi{}_{\mathcal{C}}^0
\biggl(\prod_{x\in A} \psi_x \biggr)
\nonumber
\\[-8pt]
\\[-8pt]
&\leq&1-\sum_{x\in Q}\hat\pi{}_{\mathcal{C}}^0(
\psi_x)+ \frac12\mathop{\sum_{x,y\in Q\dvtx}}_{x\neq y}
\hat\pi{}_{\mathcal{C}}^0(\psi_x\psi_y).\nonumber
\end{eqnarray}
Next, observe that
%
%
\begin{eqnarray}\label{griglia88}
&& \mathop{\sum_{x,y\in Q\dvtx}}_{x\neq y}
\hat\pi{}_{\mathcal{C}}^0(\psi_x\psi_y)
\nonumber\\[-8pt]\\[-8pt]
&&\qquad = \biggl(\sum_{x\in Q}\hat\pi{}_{\mathcal{C}}^0(
\psi_x) \biggr)^2 + \mathop{\sum
_{x,y\in Q\dvtx}}_{x\neq y}\hat\pi{}_{\mathcal{C}}^0(
\psi_x;\psi_y) + O \bigl(L^{-2+2u} \bigr),\nonumber
\end{eqnarray}
where $\hat\pi{}_{\mathcal{C}}^0(\psi_x;\psi_y):=\hat\pi{}_{\mathcal
{C}}^0(\psi_x\psi
_y)-\hat\pi{}_{\mathcal{C}}^0(\psi_x)\hat\pi{}_{\mathcal{C}}^0(\psi_y)$,
and we use $\hat\pi{}_{\mathcal{C}}^0(\psi_x)=O(1/L)$. We need the
following bound.
For some $c(\beta)\to0$ as $\beta\to\infty$, one has
%
%
\begin{equation}
\mathop{\sum_{x,y\in Q\dvtx}}_{x\neq y}\hat
\pi_{\mathcal{C}}^0(\psi_x;\psi_y)=O
\bigl(L^{-3/2+2u+c(\beta)} \bigr). \label{griglia8}
\end{equation}
%
The bound (\ref{Blemma90}) follows immediately from
(\ref{Blemma07})--(\ref{griglia8}) and the fact that $ (\sum_{x\in
Q}\hat\pi{}_{\mathcal{C}}^0(\psi_x) )^3 = O(L^{6u-3})$.

To prove (\ref{griglia8}), first notice that by exponential decay of
correlations \cite{BW}:
\[
\hat\pi{}_{\mathcal{C}}^0(\psi_x;\psi_y)
\leq c_1e^{-c_2|x-y|}
\]
for some constants $c_1,c_2>0$. Therefore (\ref{griglia8}) follows if
we prove that for any constant $C>0$,
\[
\sum_{0\neq|y|\leq C\log L}\hat\pi{}_{\mathcal{C}}^0(
\psi_0;\psi_y)=O \bigl(L^{-3/2+c(\beta)} \bigr).
\]
In particular, it suffices to show that uniformly in $y\neq0$:
%
%
\begin{equation}
\label{griglia81} \hat\pi{}_{\mathcal{C}}^0(\psi_0
\psi_y)=O \bigl(L^{-3/2+c'(\beta)} \bigr)
\end{equation}
for some constant $c'(\beta)\to0$ as $\beta\to\infty$. The proof of
(\ref{griglia81}) goes as follows. Let $E_k$ denote the event that
there exists some $k$-contour $\gamma$ that contains both $0,y$. Then
$E_{k+1}\subset E_k$, $k\geq1$, and
\[
\hat\pi{}_{\mathcal{C}}^0(\psi_0\psi_y)=
\hat\pi{}_{\mathcal{C}}^0 \bigl(\psi_0\psi
_y;E_1^c \bigr) + \sum
_{k=1}^{j}\hat\pi{}_{\mathcal{C}}^0
\bigl(\psi_0\psi_y; E_k\cap
E_{k+1}^c \bigr) + \hat\pi{}_{\mathcal{C}}^0(
\psi_0\psi_y;E_{j+1}).
\]
Now, if $\psi_0\psi_y\cap E_1^c$ occurs, then there must be two
separate families of nested contours reaching level $j$, one around $0$
and the other around $y$. By repeating the argument in the proof of
Proposition~\ref{p-heightBounds}, one has
\[
\hat\pi{}_{\mathcal{C}}^0 \bigl(\psi_0
\psi_y;E_1^c \bigr) =O
\bigl(e^{-8\beta j} \bigr) = O \bigl(L^{-2} \bigr).
\]
If $\psi_0\psi_y\cap E_k\cap E_{k+1}^c$ occurs, then there must be
nested contours around $0$ and around $y$ separately from level $k+1$
to level $j$ and there must be nested contours from level $1$ to level
$k$ comprising both $0$ and $y$. In this case, the argument in the
proof of Proposition~\ref{p-heightBounds} yields
\[
\hat\pi{}_{\mathcal{C}}^0 \bigl(\psi_0
\psi_y; E_k\cap E_{k+1}^c
\bigr)=O \bigl(e^{-8\beta
(j-k)}e^{-\beta(1-c(\beta)) k\ell_{y}} \bigr),
\]
where $\ell_{y}$ denotes the length of the shortest contour comprising
both $0,y$ and $c(\beta)$ decays as fast as $1/\beta$. Since $\ell
_{y}\geq
6$, the above expression is\break  $O(e^{-6\beta(1-c(\beta)) j}) =
O(L^{-3/2+c'(\beta)}) $ for every $k\leq j$. Finally, the same argument
shows that
\[
\hat\pi{}_{\mathcal{C}}^0(\psi_0\psi_y;E_{j+1})
= O \bigl(e^{-6\beta(1-c(\beta)) j} \bigr) = O
\bigl(L^{-3/2+c'(\beta)} \bigr).
\]
These estimates imply (\ref{griglia81}). This ends the proof of
Proposition~\ref{clam}.
\end{pf*}

\section{Mixing time in absence of entropic repulsion: Proof of
Theorem~\texorpdfstring{\protect\ref{th-nomuro}}{3}}

Like for Theorem~\ref{th-principale}, it is sufficient to give the
proof when $n^+=\log L$, the general case following easily (one needs
to generalize the approach of Section~\ref{sec-genn+} in the obvious
way). For simplicity of notations, we call the SOS equilibrium measure
with zero b.c. on $\partial\Lambda_L$ and floor/ceiling at $\pm\log L$
simply $\pi$. Recall the definition (\ref{eq-Ri}) of the diagonal lines
$R_i$ and define, for $1\leq j\leq N=(2L-1)/L^{1/2+\varepsilon}$
(assume for simplicity that $N$ and $L^{1/2+\varepsilon}$ are
integers), the subset $W_j$ of $\Lambda_L$ as
\[
W_j=\bigcup_{i\in\mathcal{I}_j} R_i,\qquad
\mathcal{I}_j= \biggl\{ \frac{j-1}2 L^{1/2+\varepsilon}<i\leq
\frac
{j+1}2 L^{1/2+\varepsilon} \biggr\}.
\]
Note that $W_j\cap W_{j+1}$ is a roughly rectangular-shaped region of
smaller side of order $L^{1/2+\varepsilon}$. Let also $S_j$ denote the
``brick'' with horizontal projection $W_j$ and floor/ceiling at
$\pm\log L$. From (\ref{eq-22}) and symmetry, we know that we have only
to show that
%
%
\begin{equation}
\label{eq-8} \bigl\|\mu^\sqcap_T-\pi \bigr\|\leq L^{-3}
\end{equation}
with $T=\exp(c\beta L^{1/2+2\varepsilon})$ and some large constant $c$.

The proof is somewhat similar (but definitely simpler) to that of
Lemma~\ref{lem-dallalto}, so we will be very sketchy. The
simplification is that, since the floor at $-\log L$ has essentially no
effect at equilibrium, it is not necessary to introduce the field
term~(\ref{eq-10}) to compensate the entropic repulsion.

We apply Theorem~\ref{th-PW} with the following censoring protocol. We
let $\Delta T=T/N$ and we let evolve first the brick $S_1$ for a
time-lag $\Delta T$, then $S_2$ for another time-lag $\Delta T$, and so
on up to $S_N$. From Proposition~\ref{prop-canpaths} (which is
immediately adapted to the case where $\Lambda$ is not \textit{exactly}
a $L\times m$ rectangle but rather is \textit{included} in some,
possibly tilted, $L\times m$ rectangle) we have that the mixing time in
each brick $S_j$, uniformly on the b.c. around it, is $\exp(O(\beta n^+
L^{1/2+\varepsilon}))$. Therefore, if $c$ in the definition of $T$ is
sufficiently large, we can assume (modulo a negligible error term) that
after the $j$th time-lag the $j$th brick is exactly at equilibrium,
with $0$ b.c. on $\partial W_j\cap\partial\Lambda_L$, b.c. $n^+=\log L$
on $\partial W_j\cap W_{j+1}$ and, on $\partial W_j\cap W_{j-1}$, a
b.c. determined by the result of the evolution in the $(j-1)$th
time-lag. Theorem~\ref{th-PW} then guarantees that the l.h.s. of
(\ref{eq-8}) is smaller than $\|\tilde\mu{}^\sqcap_T-\pi\|$, with
$\tilde\mu_T^\sqcap$ the law at time $T$ of the censored dynamics. The
inequality (\ref{eq-8}) then follows (via a repeated application of
DLR) provided that one proves that, if $\pi_j$ denotes the equilibrium
on $U_j=W_1\cup\cdots\cup W_j$ with $0$ b.c. on $\partial U_j\cap
\partial \Lambda_L$ and $n^+$ b.c. on $\partial U_j\cap W_{j+1}$, then
%
%
\begin{equation}
\label{eq-24} \|\pi_j-\pi\|_{U_{j-1}}=O \bigl(L^{-4}
\bigr),
\end{equation}
that is, the marginals of the two measures on $U_{j-1}$ are very close.
%
%
\begin{figure}

\includegraphics{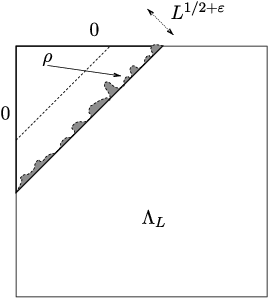}

\caption{A drawing of the triangular region $\Lambda'$ and of the chain
$\rho$.}\label{fig:sec8}
\end{figure}

In analogy with the way Theorem~\ref{lem-dolore_infinito} follows from
Lemma~\ref{lem-1} (cf. Section~\ref{accoppiamento}), to get
(\ref{eq-24}) it is sufficient to prove that the open $1$-contour does
not intersect $W_{j-1}$, except with probability $O(L^{-C})$. In turn
(and in analogy to how Lemma~\ref{lem-1} follows from
Lemma~\ref{Blemma}), the desired upper bound on the deviation of the
$1$-contour follows if we prove the following. Consider a diagonal line
$R_i$, with $i\geq L^{1/2+\varepsilon}$. Let $\Lambda'=\bigcup_{a\leq
i}R_a$ and let $\rho$ be a chain of sites in $\Lambda'$, connecting two
adjacent sides of $\Lambda_L$, and at distance at most
$L^{\varepsilon}$ from $R_i$, see Figure~\ref{fig:sec8}.

The chain $\rho$ disconnects $\Lambda_L$ into two subsets and call
$\Lambda_-$ the one containing the Northwest corner of $\Lambda_L$. Let
$\pi'$ be the SOS measure on $\Lambda_-$, with $0$ b.c. on
$\partial\Lambda_-\cap \partial\Lambda_L$ and $1$ b.c. on $\rho$. Then,
the $\pi'$-probability that the unique $1$-contour reaches distance
$L^{1/2+\varepsilon}$ from $\rho$ is smaller than any inverse power of
$L$.

This is much easier to prove than the somewhat similar estimate of
Lemma~\ref{Blemma}. The reason is that, since the fields (\ref{eq-10})
are absent and the floor has a negligible effect (recall that the floor
was instead at height zero in Lemma~\ref{Blemma}), the desired estimate
follows directly from a suitable modification of
Proposition~\ref{dkspro}, where the square $Q_L$ is replaced by a
triangular domain.


\begin{appendix}
\section{Peierls' estimates and low-temperature~expansion}\label{clusters}
Here we collect some rather standard facts concerning the
low-temperature expansion of the SOS model. With a small abuse of
notation let $Z_\Lambda$ be the partition function corresponding to the
measure $\hat\pi{}_\Lambda^0$.
Following \cite{BW}, Section~2, we will write $Z_\Lambda$ as a sum over
compatible cluster configurations.

%
\begin{definition}
A \textit{cluster} $X$ is a tuple $(\gamma,h_1,\ldots,h_{|\gamma|})$,
where $\gamma$ is a finite connected set of dual lattice bonds, and
$h_i\in\mathbb{Z} \setminus\{0\}$. A cluster configuration is
a~collection of clusters $\{X_1,\ldots,X_m\}$.
\end{definition}

Let $\Omega^0_\Lambda$ be the set of height functions $\eta\in
\mathbb{Z} ^{\mathbb{Z}^2}$ with $\eta_x=0$ for every $x\notin
\Lambda$. Given $\eta\in\Omega^0_\Lambda$ one can define the associated
cluster configuration $ \{X_1,\ldots,X_m\}$, $m=m(\eta)$, as follows.
Fix an arbitrary orientation of the edges $e=(x,y)$ of $\mathbb{Z}^2$.
Let $\mathcal{S}=\mathcal{S}(\eta)$ be the collection of all dual edges
$e'$ such that the gradient of $\eta$
along the edge $e=(x,y)$ crossing $e'$ satisfies $h_e:=
\eta_y-\eta_x\neq0$. Let $\gamma_1,\ldots,\gamma_m$ denote the
connected components of $\mathcal{S}$. For each $j=1,\ldots,m$ let
$X_j=(\gamma_j,\{ h_e\})$ denote the associated cluster, where
$\{h_e\}$ denotes the collection of gradients of $\eta$ along edges $e$
that cross a dual edge $e'\in\gamma_j$.

We define $\mathcal{L}(\Lambda)=\bigcup_{\eta\in\Omega_\Lambda
^0}\{
X_1,\ldots,X_m\}$ to be the collection of all possible clusters. Two
clusters $X,X'$ are called compatible, in symbols $X\sim X'$, iff
$\gamma\cup\gamma'$ is not a connected set of dual edges, where
$\gamma,\gamma'$ denote the geometric part of $X,X'$, respectively.
Otherwise, $X,X'$ are said to be incompatible, in symbols $X\nsim X'$.
Also, let $\mathcal{D}(\Lambda)$ denote the collection of all pairwise
compatible cluster configurations, that is, of configurations
$\{X_1,\ldots,X_m\}$ with $m\geq0$, $X_i\in\mathcal{L}(\Lambda)$ for
$i=1,\ldots,m$ and $X_i$ compatible with $X_j$ for every $i\ne j$.
Then, one has
%
%
\begin{equation}
\label{Zclusters} \quad Z_\Lambda=\sum_{\{X_1,\ldots,X_m\}\in\mathcal
{D}(\Lambda)}\prod
_{j=1}^m\rho(X_j),\qquad
\rho(X_j)=\exp{ \biggl[-\beta\sum_{e}|h_e|\biggr]},
\end{equation}
where the sum over $e$ extends over all $|\gamma_j|$ edges $e$ which cross
a dual edge $e'\in\gamma_j$.

\subsection{Peierls' estimate}
As above, $\mathcal{S}$ denotes the random set of dual edges crossing
a nonzero gradient.\vadjust{\goodbreak}

%
%
\begin{lemma}
\label{lem-peierls_c} There exists $\beta_0>0$ such that for all
$\beta
\geq\beta_0$, for all finite connected $\Lambda\subset\mathbb{Z}^2$,
and all set $V$ of dual edges,
%
%
\begin{equation}
\label{peierls_c1} \hat\pi{}^0_\Lambda(
\mathcal{S} \supset V )\leq e^{-(\beta
-\beta_0)|V|}.
\end{equation}
\end{lemma}

\begin{pf}
We suppose that $V$ is connected, since the general case follows by
a~standard generalization. Let $e'$ be a dual edge in $V$ and let
$\mathcal{S}_0$ denote the largest connected component of $\mathcal{S}$
containing $e'$. Then
\[
\hat\pi{}^0_\Lambda(V\subset\mathcal{S} )\leq\sum
_{S\dvtx
S\supset V}\hat\pi{}^0_\Lambda(
\mathcal{S}_0=S ),
\]
where the sum is over all connected sets $S$ of dual edges, such that
$S\supset V$. Any \mbox{$\eta\in\Omega_\Lambda^0$} such that $\mathcal
{S}_0=S$ corresponds to a cluster configuration
$\{X_S,X_1,\ldots, X_m\}\in\mathcal{D}(\Lambda)$, where $X_S$ is
a cluster of the form $X_S=(S,h_1,\ldots,\break h_{|S|})$. For a~fixed~$S$ one
has $\sum_{h_1\neq0,\ldots,h_{|S|}\neq0}\rho(X_S)\leq
(4e^{-\beta})^{|S|}$, if $\beta\geq\log2$. Therefore,
using~(\ref{Zclusters}), neglecting the constraints on $X_S$, one has
%
%
\begin{equation}
\label{pei1} \hat\pi{}^0_\Lambda(\mathcal{S}_0=S
) \leq \bigl(4e^{-\beta} \bigr)^{|S|}.
\end{equation}
Summing over all $S$ as above and estimating by $C^\ell$ the number of
connected $S\ni e'$ with $|S|=\ell$ gives
\[
\hat\pi{}^0_\Lambda(V\subset\mathcal{S} )\leq\sum
_{\ell\geq|V|} \bigl(4Ce^{-\beta} \bigr)^{\ell} \leq
e^{-(\beta-\beta_0)|V|}.
\]\upqed
\end{pf}

\subsection{Cluster expansion}
We shall use a standard expansion for partition functions, adapted
from~\cite{KP,DKS}.
For $U$ \textit{a subset of the inner boundary of $\Lambda$}, we write
$Z_{\Lambda,U}$ for the partition function with the sum over $\eta$
restricted to those $\eta\in\Omega_\Lambda^0$ such that $\eta
_x\geq0$
for all $x\in U$. One can write $Z_{\Lambda,U}$ similarly to (\ref
{Zclusters}): one defines $\mathcal{L}(\Lambda,U)$ as the set of all
possible clusters (arising from height configurations respecting the
positivity constraint in $U$) and $\mathcal{D}(\Lambda,U)$ as the
collection of all pairwise compatible cluster configurations, with the
same notion of compatibility as before. Then, one can check that
(\ref{Zclusters}) holds for $Z_{\Lambda,U}$, just with
$\mathcal{D}(\Lambda)$ replaced by $\mathcal{D}(\Lambda,U)$. We
emphasize that it is here that one uses that $U$ is a subset of the
inner boundary of $\Lambda$: the identity would be false, for example,
if $U$ were the whole $\Lambda$.

%
\begin{lemma}
\label{lem-dks} There exists $\beta_0$ such that for all $\beta\geq
\beta_0$, for all finite connected $\Lambda\subset\mathbb{Z}^2$ and any
subset of its inner boundary $U\subset\Lambda$,
%
%
\begin{equation}
\label{zetaexp} \log Z_{\Lambda,U}=\sum_{V\subset\Lambda}
\varphi_{U}(V),
\end{equation}
where the potentials $\varphi_{U}(V)$ satisfy 
\begin{longlist}[(iii)]
\item[(i)] $\varphi_{U}(V)=0$ if $V$ is not connected.

\item[(ii)] $\varphi_{U}(V)=\varphi_0(V)$ if $\operatorname{dist}(V,
U)\neq0$, for some shift invariant potential
$V\mapsto\varphi_0(V)$, that is,
\[
\varphi_0(V)= \varphi_0(V+x)\qquad\forall x\in
\mathbb{Z}^2.
\]\vadjust{\goodbreak}

\item[(iii)] There exists a constant $\beta_0>0$ such that
\[
\sup_{\Lambda\supset V}\sup_{U} \bigl|
\varphi_{U}(V) \bigr| \leq\exp \bigl(-(\beta-\beta_0) \,d(V)
\bigr),
\]
where $d(V)$ is the cardinality of the smallest connected set of
bonds of $\mathbb{Z}^2$ containing all the boundary bonds of $V$ (i.e.,
bonds connecting $V$ to $V^c$).
\end{longlist}
\end{lemma}

\begin{pf}
We shall apply the main theorem from \cite{KP}.
Following \cite{KP}, we define $\mathcal{C}$ as the set of all cluster
configurations $C$ that cannot be decomposed as $C=C_1\cup C_2$ with
two nonempty cluster configurations $C_1,C_2$ such that $\{X_1,X_2\}$
is compatible for every $X_1\in C_1$ and $X_2\in C_2$. For a cluster
$X=(\gamma,h_1,\ldots,h_{|\gamma|})$, define the function
$a(X)=\lambda|X|$, where $\lambda>0$ is to be specified later and
$|X|:=\sum_{i=1}^{|\gamma|}|h_i|$. Note that, for a fixed
$X=(\gamma,h_1,\ldots,h_{|\gamma|})$, one has
%
%
\begin{eqnarray}\label{kp2}
\sum_{X'\dvtx X'\nsim X}e^{2\lambda|X'|}\rho
\bigl(X' \bigr)&\leq& \sum_{\gamma'\dvtx \gamma\cup\gamma'\ \mathrm{connected}}c(\beta,
\lambda)e^{-(\beta-2\lambda)|\gamma'|}
\nonumber\\[-8pt]\\[-8pt]
&\leq& c'(\beta,\lambda)|\gamma|,\nonumber
\end{eqnarray}
where, for example, $c(\beta,\lambda)=2(1-e^{-(\beta-2\lambda)})^{-1}$
and $c'(\beta,\lambda)=3e^{-(\beta-2\lambda)}c(\beta,\lambda)$.
So if
$\beta\geq2\lambda+1$, and $\lambda$ is larger than some absolute value
$\lambda_0$, (\ref{kp2}) implies
%
%
\begin{equation}
\label{kp3} \sum_{X'\dvtx X'\nsim X}e^{2\lambda|X'|}\rho
\bigl(X' \bigr) \leq a(X).
\end{equation}
Equation (\ref{kp3}) corresponds to equation~(1) in \cite{KP}. The main
theorem there then allows one to write
%
%
\begin{equation}
\label{kp4} \log Z_{\Lambda,U} = \sum_{C\dvtx C\subset\mathcal
{L}(\Lambda,U)}
\Phi(C)
\end{equation}
for a function $\Phi$ on cluster configurations satisfying $\Phi(C)=0$
if $C\notin\mathcal{C}$ and
%
%
\begin{equation}
\label{kp5} \sum_{C\dvtx C\nsim X} \bigl|\Phi(C) \bigr|e^{a(C)}
\leq a(X)
\end{equation}
for every cluster $X$, where $a(C):=\sum_{i=1}^n a(X_i)$ if
$C=\{X_1,\ldots,X_n\}$ and the notation $C\nsim X$ indicates that
$X_i\nsim X$ for some $X_i\in C$. The potentials $\Phi$ depend on $U$
but for lightness of notations we keep this implicit. Taking $X$ to be
the elementary unit square cluster such that $|X|=4$ in (\ref{kp5}) one
finds in particular that for every cluster configuration $C$ one has
%
%
\begin{equation}
\label{kp6} \bigl|\Phi(C) \bigr|\leq4e^{-a(C)}.
\end{equation}
To write $Z_{\Lambda,U}$ as in (\ref{zetaexp}), we follow \cite{DKS},
Section~3.9. For any cluster configuration $C\in\mathcal{C}$,
$C=\{X_1,\ldots,X_n\}$ with $X_i=(\gamma_i,h_1,\ldots,h_{|\gamma_i|})$,
write $C_g$ for the geometric part of $C$, that is,
$C_g=(\gamma_1,\ldots,\gamma_n)$. For any
$G:=(\gamma_1,\ldots,\gamma_n)$, define
\[
\psi(G)=\sum_{C\in\mathcal{C}\dvtx C_g= G}\Phi(C).
\]
Using (\ref{kp6}), if $\lambda\geq\lambda_0$, one has
%
%
\begin{equation}
\label{kp67} \bigl|\psi(G)\bigr|\leq4e^{-(\lambda/2)\sum_i|\gamma_i|}.
\end{equation}
Finally, set
%
%
\begin{equation}
\label{kp7} \varphi_{U}(V)=\mathop{\sum
_{G=(\gamma_1,\ldots,\gamma_n)\dvtx}}_{\bigcup_i
\operatorname{Int}\gamma_i = V} \psi(G).
\end{equation}
From (\ref{kp4}), one obtains the expansion (\ref{zetaexp}). The
properties (i)--(ii)--(iii) follow as in \cite{DKS} from an explicit
representation of the function $\Phi(C)$, and from the exponential
decay (\ref{kp67}).
\end{pf}

\subsection{Distribution of an open contour}
Here we apply the expansion of Lemma~\ref{lem-dks} to derive an
expression for the law of an open contour in the presence of a stepped
boundary condition. Suppose a finite connected
$\Lambda\subset\mathbb{Z} ^2$ is given together with a boundary
condition $\xi$ with values in $\{0,1\}$ and such that it induces a
\textit{unique} open $1$-contour $\gamma$. If $\gamma=\Gamma$, for some
connected set of dual edges $\Gamma$, then $\Lambda$ is partitioned
into two connected regions $\Lambda_+,\Lambda_-$ separated by $\Gamma$.
Moreover,
%
%
\begin{equation}
\label{agamma} \hat\pi{}^{\xi}_\Lambda(\gamma=\Gamma)\propto
e^{-\beta|\Gamma|}Z_{\Lambda_-,\Delta_\Gamma^-}Z_{\Lambda
_+,\Delta_\Gamma^+},
\end{equation}
where $\Delta_\Gamma^\pm$ are the sets defined after (\ref{Blemma2}),
and we use the notation $Z_{\Lambda,U}$ that was introduced in
Lemma~\ref{lem-dks}.
By expanding the partition functions as in (\ref{zetaexp}), and
retaining only terms depending on $\Gamma$, one finds that
%
%
\begin{equation}
\label{aexpan1} \hat\pi{}^{\xi}_\Lambda(\gamma=\Gamma) \propto
\exp \bigl(-\beta|\Gamma| + \Psi_{\Lambda
}(\Gamma) \bigr),
\end{equation}
where
\[
\Psi_\Lambda(\Gamma)=- \mathop{\sum_{V\subset\Lambda}}_{V\cap
\Gamma\neq\varnothing}
\varphi_0(V)+ \mathop{\sum_{V\subset\Lambda_+}}_{V\cap\Gamma\neq
\varnothing}
\varphi_{\Delta_\Gamma^+}(V)+ \mathop{\sum_{V\subset\Lambda
_-}}_{V\cap\Gamma\neq\varnothing}
\varphi_{\Delta_\Gamma^-}(V).
\]
Here, the notation $V\cap\Gamma\neq\varnothing$ simply means that
$V\cap(\Delta_\Gamma^-\cup\Delta_\Gamma^+)\neq\varnothing$. It is
convenient to rewrite this expansion in the form
%
%
\begin{equation}
\label{expansi} \Psi_\Lambda(\Gamma)= \mathop{\sum
_{V\subset\Lambda}}_{V\cap\Gamma\neq\varnothing} \phi(V;\Gamma),
\end{equation}
where the ``decorations'' $\{\phi(V;\Gamma)\}_{V\subset\Lambda}$
satisfy (cf. Lemma~\ref{lem-dks}):
\begin{longlist}[(iii)]
\item[(i)] $\phi(V;\Gamma)=0$ if $V$ is not connected.

\item[(ii)]
$\phi$ is shift invariant in the sense that
\[
\phi(V;\Gamma)=\phi(V+x;\Gamma+x)\qquad\forall x\in\mathbb{Z}^2.
\]

\item[(iii)] There exists a constant $\beta_0>0$ such that
\[
\sup_{\Gamma} \bigl|\phi(V;\Gamma) \bigr|\leq\exp \bigl(-(
\beta-\beta_0) \,d(V) \bigr),\vadjust{\goodbreak}
\]
where $d(V)$ is defined as in Lemma~\ref{lem-dks}.\vadjust{\goodbreak}
\end{longlist}
It is standard to check that these properties imply the existence of
$\beta_0$ such that, for any $\beta\geq\beta_0$ and any $\ell\geq1$,
%
%
\begin{equation}
\label{eq-uffa} \mathop{\sum_{V\ni0}}_{d(V)\geq\ell}
\sup_\Gamma \bigl|\phi(V;\Gamma) \bigr|\leq\exp \bigl(-(
\beta- \beta_0)\ell \bigr).
\end{equation}

\section{Large deviations of the contour}\label{largecontour}

We begin by fixing some notation. $\mathbb{S}=\{1,2,\ldots,L\}\times
\mathbb{Z}$ will denote the infinite vertical strip of width $L$. We
denote by $A,B$ the points of coordinates $(1,0)$ and $(L,0)$,
respectively. The $L\times L$ square with corners $A,B,C,D$, where
$C=(L,L)$ and $D=(1,L)$ will be denoted by $Q_L$. Next we fix an open
contour $\Gamma_*$ inside $Q_L$ joining $A$ with $B$ with the property
that $\Gamma_*$ stays above the line at zero height and does not reach
height $L^{\delta}$, for some $\delta<1/2$ that in the applications
will be taken small. The region inside $\mathbb{S}$ above $\Gamma_*$ is
denoted by $\Lambda$ and we set $Q=Q_L\cap\Lambda$.
We let $\nu_Q$ be the law of the open $1$-contour $\Gamma$ joining $A$
with $B$, for the SOS model without floor/ceiling in $Q$, with $1$ b.c.
along $\Gamma_*$ and $0$ b.c. otherwise. We know that $\nu_Q$ can be
written as
%
%
\begin{equation}
\label{expan1} \nu_Q(\Gamma) \propto\exp \bigl(-\beta|\Gamma| +
\Psi_Q(\Gamma) \bigr),
\end{equation}
where $\Psi_Q$ is the function appearing in (\ref{expansi}).
Fix $a\in(1/2,1)$ and $\ell\in[L^{a}, L/\log(L)^2]$ and define
$E_\ell$ as the event that the path $\Gamma$ reaches height $\ell$ (note
that $\ell\gg L^\delta$).
%
%
\begin{proposition}\label{dkspro} Uniformly in $\Gamma_*$ as above,
there exists $\beta_0$ independent of $(\ell, L)$ such that, for all
$\beta>\beta_0$ and all $L$ large enough
\[
\nu_Q(E_\ell)\leq c' \exp \bigl(-c
\ell^{2}/L \bigr)
\]
for some constants $c,c'$.
\end{proposition}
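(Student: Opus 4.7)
The goal is to show that the open 1-contour $\Gamma$, whose law is given by the cluster-expansion representation~\eqref{expan1}, behaves at low temperature as an effective random walk bridge from $A$ to $B$ of length of order $L$, so that the event $E_\ell$ of reaching height $\ell$ falls into the Gaussian large-deviation regime $\exp(-c\ell^2/L)$ characteristic of such walks. The plan is to implement the standard Dobrushin-Kotecky-Shlosman / Ornstein-Zernike scheme, exploiting the locality and exponential decay of the decorations $\phi(V;\Gamma)$ stated in properties (i)--(iii) after~\eqref{expansi} and summarized in~\eqref{eq:uffa}.

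More concretely, the three main steps are as follows. First, I would define cone-type cut points on $\Gamma$ and show, using the pure-length factor $e^{-\beta|\Gamma|}$ combined with the summability bound~\eqref{eq:uffa}, that such cut points are exponentially dense: the size of the gap between consecutive cut points is stochastically dominated by a geometric random variable of parameter $1-O(e^{-c\beta})$, uniformly in $Q$ and $\Gamma_*$. Second, the locality of the decorations yields an approximate factorization of the Gibbs weight across cut points, up to a multiplicative error $\exp(O(e^{-c\beta})\cdot\text{gap})$ that can be absorbed into the effective step distribution. The sequence of cut points $A = x_0,x_1,\ldots,x_n = B$ then behaves as a random walk bridge of $n \asymp L$ i.i.d.\ steps with finite variance $\sigma^2 = O(1)$, exponential tails and zero vertical mean (inherited from the reflection symmetry $y\mapsto -y$ of the bulk decorations, away from $\Gamma_*$ and $\partial Q$). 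Third, a classical reflection-type estimate for random walk bridges gives
\[
\mathbb{P}\Big(\max_{1\leq k\leq n}|y_k| \geq \ell\Big) \leq c_1\exp\big(-c_2\,\ell^2/L\big),
\]
where $y_k$ denotes the vertical coordinate of $x_k$; since cut points are dense on $\Gamma$ (gaps of size $O(\log L)$ with overwhelming probability), the event $E_\ell$ forces at least one cut point to lie at height $\geq \ell - O(\log L)\geq \ell/2$, and the bound transfers from cut-point heights to the full contour.

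The main technical obstacle is running the DKS/OZ scheme \emph{uniformly} in the prescribed external contour $\Gamma_*$ and in the finite volume $Q$. The decorations $\phi(V;\Gamma)$ depend on both the shape of $\Gamma$ and on its environment (in particular on $\Gamma_*$ and on $\partial Q_L$), so the bridge factorization is only approximate, and one must check that the constraints imposed by $\Gamma_*$ and $\partial Q$ do not bias the effective walk. This is handled by the separation of scales dictated by the hypotheses: the constraint $\Gamma_*\subset \{y\leq L^\delta\}$ combined with $\ell \geq L^a\gg L^\delta$ ensures that the relevant high-altitude portion of $\Gamma$ is well separated from $\Gamma_*$, so that interactions with $\Gamma_*$ affect only a bottom strip of width $O(L^\delta)$ and contribute a multiplicative factor $e^{O(L^\delta)}\ll e^{c\ell^2/L}$; the upper bound $\ell\leq L/\log^2L$ keeps $\Gamma$ sufficiently far from the sides and the top of $Q_L$ for those boundaries to play no role; and the positivity constraint $\Gamma$ above $\Gamma_*$ is handled by monotonicity. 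Modulo these technicalities, Proposition~\ref{dkspro} reduces to the standard Gaussian deviation estimate for the effective OZ random walk.
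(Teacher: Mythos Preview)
Your overall strategy is correct and matches the paper's: the contour behaves like a random-walk bridge and $E_\ell$ is a Gaussian large deviation, with the final estimate ultimately coming from the DKS machinery (\cite{DKS}*{Section~4.15}).

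However, your handling of the boundary effects has a real gap. The difficulty is not merely that decorations near $\Gamma_*$ are slightly perturbed, but that they might in principle \emph{pin} $\Gamma$ to $\Gamma_*$, destroying the zero-mean property of the effective walk; the paper's Reader's Guide preceding Lemma~\ref{lem:1} flags exactly this issue. Your claim that the $\Gamma_*$-interaction contributes only a factor $e^{O(L^\delta)}$ is not justified: $\Gamma$ starts and ends \emph{on} $\Gamma_*$ (at $A$ and $B$), and nothing a priori prevents $\Gamma$ from staying within $O(1)$ of $\Gamma_*$ along an order-$L$ stretch, yielding a multiplicative error $e^{O(e^{-\beta}L)}$ in any weight comparison. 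Moreover, even the bound you claim can swamp the target: for $a$ close to $1/2$ and $\delta$ close to $1/2$ one has $L^\delta \gg \ell^2/L \asymp L^{2a-1}$.

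The paper resolves this by a sequence of reductions rather than a direct OZ expansion in $Q$. After first removing the top side of $Q$ by a decay argument (passing from $\nu_Q$ to $\nu_\Lambda$), the key step is to use FKG to raise the boundary conditions on the lateral sides of $\Lambda$ from $0$ to $1$ up to height $(3/4)\ell$, thereby shifting the endpoints of $\Gamma$ up to that level. One then conditions on the increasing event $G^+$ that $\Gamma$ stays at distance $\ge L^\epsilon$ from $\Gamma_*$; on $G^+$ the decorations coincide (up to negligible error) with those of the infinite strip $\bbS$, and a further comparison passes to the full-plane weights $\Psi_{\bbZ^2}$. Crucially, the complementary event $(G^+)^c$ is now itself a \emph{downward} deviation of order $\ell/2$ from the new endpoint height, so $\nu_{\bbS}((G^+)^c)\le\nu_{\bbS}(E_\ell)$ and the conditioning costs only a factor $(1-\nu_{\bbS}(E_\ell))^{-1}$, closing the argument self-consistently. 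Only at this final stage, with translation-invariant $\bbZ^2$ decorations, is \cite{DKS}*{Section~4.15} invoked as a black box.

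In short: your outline captures the right phenomenon, but to make it rigorous you need the FKG lifting of the endpoints (or an equivalent device) to decouple $\Gamma$ from $\Gamma_*$ \emph{before} appealing to the random-walk estimate.
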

%

\subsection{Proof of Proposition~\texorpdfstring{\protect\ref{dkspro}}{B.1}}
As a first preliminary step we remove the dependence on the upper
boundary of $Q$. Let $\nu_\Lambda$ be the probability distribution on
contours in $\Lambda$ joining $A,B$ given by
\[
\nu_\Lambda(\Gamma)\propto\exp \bigl(-\beta|\Gamma| + \Psi
_\Lambda(\Gamma) \bigr).
\]

%
%
\begin{claim} For any $\beta$ large enough
\[
\nu_Q(E_\ell)\leq3 \nu_\Lambda(E_\ell)
+ e^{-cL}
\]
for a suitable constant $c=c(\beta)$.
\end{claim}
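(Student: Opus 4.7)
The plan is to split $E_\ell$ according to whether the contour reaches height $L/2$, bound the ``high'' part directly by Peierls, and compare the ``low'' part to $\nu_\L$ using the fact that the top side of $Q$ is far away.

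\medskip\noindent\textbf{Splitting.} Introduce the event $F=\{\G \text{ stays strictly below height } L/2\}$. Since $\ell\leq L/\log^2 L<L/2$, we have
\[
\nu_Q(E_\ell)\leq \nu_Q(E_\ell\cap F)+\nu_Q(F^c).
\]
To bound $\nu_Q(F^c)$, note that any contour reaching height $L/2$ has length at least $2L-O(L^\delta)$, exceeding the minimum $L-1$ by essentially $L$. From \eqref{expan1}, the pointwise bound $|\Psi_Q(\G)|\leq c(\b)|\G|$ with $c(\b)\downarrow 0$ (apply \eqref{eq:uffa} to each dual bond of $\G$), the lower bound $Z_Q\geq e^{-\b(L-1)-c(\b)L}$ coming from the nearly-straight path, and the enumeration of paths of length $n$ from $A$ to $B$ by $n\,\mu_2^{\,n}$ (the connective constant being $\mu_2<e$), a standard Peierls sum yields $\nu_Q(F^c)\leq e^{-c_1 L}$ for $\b$ large. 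The identical argument, with $Q$ replaced by $\L$, gives $\nu_\L(F^c)\leq e^{-c_1 L}$.

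\medskip\noindent\textbf{Cluster expansion comparison on $F$.} For $\G\in F$ the distance from $\G$ to the top side of $Q$ is at least $L/2$. By the shift-invariance property (ii) of the potentials in Lemma~\ref{lem:dks} (transcribed to the open-contour setting~\eqref{expansi}), the difference $\Psi_Q(\G)-\Psi_\L(\G)$ receives contributions only from clusters $V$ with $V\cap \G\neq\emptyset$ that ``feel'' the top side of $Q$, i.e.\ satisfy $d(V)\geq L/2$. Using \eqref{eq:uffa} together with the trivial bound $|\G|\leq 2L^2$,
\[
|\Psi_Q(\G)-\Psi_\L(\G)|\;\leq\; 2|\G|\,e^{-(\b-\b_0)L/2}\;\leq\;\e,
\]
where $\e\leq 1/4$ once $\b$ and $L$ are large. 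Consequently, the un-normalized weights $w_\bullet(\G):=e^{-\b|\G|+\Psi_\bullet(\G)}$ satisfy $e^{-\e}w_\L(\G)\leq w_Q(\G)\leq e^{\e}w_\L(\G)$ for every $\G\in F$.

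\medskip\noindent\textbf{Conclusion.} Summing the right inequality over $E_\ell\cap F$ and using $Z_Q\geq Z_Q^F\geq e^{-\e}Z_\L^F\geq e^{-\e}(1-e^{-c_1L})Z_\L$,
\[
\nu_Q(E_\ell\cap F)\;=\;\frac{\sum_{\G\in E_\ell\cap F}w_Q(\G)}{Z_Q}\;\leq\; e^{\e}\,\frac{Z_\L}{Z_Q}\,\nu_\L(E_\ell\cap F)\;\leq\;\frac{e^{2\e}}{1-e^{-c_1L}}\,\nu_\L(E_\ell).
\]
For $\b$ and $L$ large the prefactor is at most $2$, so combining with the bound on $\nu_Q(F^c)$ gives $\nu_Q(E_\ell)\leq 2\,\nu_\L(E_\ell)+e^{-c_1 L}\leq 3\,\nu_\L(E_\ell)+e^{-cL}$, as claimed.

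\medskip\noindent\textbf{Main obstacle.} The only non-trivial step is the cluster expansion comparison: showing that the two ambient regions produce almost identical decorations on contours that remain far from the upper boundary of $Q$. This rests on combining the shift-invariance of $\phi$ in the bulk (property (ii) of Lemma~\ref{lem:dks}) with the exponential decay \eqref{eq:uffa}, which together encode the exponential insensitivity of the cluster expansion to boundary effects at distance $\geq L/2$.
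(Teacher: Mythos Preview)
Your proof is correct and follows essentially the same route as the paper's. Both arguments split according to whether the contour stays below a threshold height (you use $L/2$, the paper uses $L-\log^2 L$), control the ``high'' piece by a Peierls estimate on the excess length, and compare the ``low'' piece to $\nu_\L$ via the exponential decay~\eqref{eq:uffa} of the decoration difference $\Psi_Q-\Psi_\L$. The only cosmetic distinction is that the paper rewrites $\nu_Q$ as a change of measure $e^{\Delta\Psi}$ with respect to $\nu_\L$ before splitting, whereas you split $\nu_Q$ directly and then compare un-normalized weights; your formulation is slightly more transparent and yields the constant $2$ rather than $3$ in front of $\nu_\L(E_\ell)$.
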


\begin{pf}
Let $\mathcal{G}_0$ and $\mathcal{G}_1$ be the set of contours which
stay below height $L-\log(L)^2$ and height $L$, respectively. Then
%
%
\begin{equation}
\label{eq-LD1} \nu_Q(E_\ell)\leq\frac{\nu_\Lambda(\chi_{E_\ell
} \chi
_{\mathcal{G}
_0}
e^{\Delta
\Psi(\Gamma)})} {
\nu_\Lambda(\chi_{\mathcal{G}_0} e^{\Delta\Psi(\Gamma)})} +
\frac{\nu_\Lambda(\chi_{E_\ell} (1-\chi_{\mathcal{G}_0})
\chi_{\mathcal{G}
_1}
e^{\Delta
\Psi(\Gamma)})}{\nu_\Lambda(\chi_{\mathcal{G}_0} e^{\Delta\Psi
(\Gamma)})},
\end{equation}
where
\[
\Delta\Psi(\Gamma)= \Psi_Q(\Gamma)-\Psi_\Lambda(\Gamma)
\]
and the inequality sign comes from restricting the average in the
denominator from contours in $\mathcal{G}_1$ to contours $\mathcal
{G}_0$. Since $\min_{\Gamma\in\mathcal{G}_0^c}|\Gamma|-\min_{\Gamma\in
\mathcal{G}_0}|\Gamma|\geq L$, a standard Peierls argument shows that
\[
\nu_\Lambda \bigl(\mathcal{G}_0^c \bigr)\leq
e^{-(
\beta-\beta_0)L}
\]
for some $\beta_0$. Moreover, thanks to the exponential decay of the
decorations (\ref{expansi}),
\[
\bigl|\Delta\Psi(\Gamma) \bigr|\leq\tfrac12\qquad\forall\Gamma\in\mathcal{G}_0
\]
and
\[
\bigl|\Delta\Psi(\Gamma)\bigr|\leq e^{-(\beta-\beta
_0)}L\qquad\forall\Gamma \in\mathcal{G}_1.
\]
Therefore, the first term in the r.h.s. of (\ref{eq-LD1}) is smaller
than $3 \nu_\Lambda(E_\ell)$ while the second one is bounded from above
by $e^{-c L}$ for some constant $c=c(\beta)$ diverging as
$\beta\to\infty$.
\end{pf}
Back to the proof of the proposition: since the event $E_\ell$ is
increasing, we can change the b.c. from $0$ to $1$ along the lateral
sides of $\Lambda$, up to height $(3/4)\ell$ [note that in this
situation the endpoints of $\Gamma$ are shifted upward by $(3/4)\ell
$]. We still call~$\nu_\Lambda$ the measure of $\Gamma$ in this
situation. Again by FKG, we have
%
%
\begin{equation}
\label{jafaremo} \nu_\Lambda(E_\ell)\leq\frac{\nu_\Lambda
(E_\ell;G^+)}{\nu
_\Lambda(G^+)},
\end{equation}
where $G^+$ is the increasing event that $\Gamma$ stays at distance at
least $L^\varepsilon$ from $\Gamma_*$, for some small but positive constant
$\varepsilon$.

Thanks to the decay properties of the potentials $\phi(V;\Gamma)$, for
every $\Gamma$ in $G^+$ we can replace $\Psi_Q(\Gamma)$ with $\Psi
_{\mathbb
S}(\Gamma)$, up to a negligible error term. Then, the ratio
(\ref{jafaremo}) equals
%
%
\begin{equation}
\label{eq-jafaremo2} \bigl(1+o(1) \bigr) \frac{\nu_{\mathbb
S}(E_\ell;G^+)}{\nu_{\mathbb
S}(G^+)}\leq \bigl(1+o(1)
\bigr) \frac{\nu_{\mathbb S}(E_\ell)}{\nu_{\mathbb S}(G^+)}
\end{equation}
with $\nu_{\mathbb S}$ the measure of the contour for SOS in the strip
$\mathbb S$, with $0$ b.c. above $A+(0,(3/4)\ell), B+(0,(3/4)\ell)$ and
$1$ b.c. below it.

Note that, if the complementary event $(G^+)^c$ happens, it means that
the contour $\Gamma$ makes a downward deviation at least $(1/2)\ell$
from its natural height $(3/4)\ell$. Therefore, $\nu_{\mathbb
S}(G^+)\geq1-\nu_{\mathbb S}(E_\ell)$. As a consequence, it suffices
to prove:
%
%
\begin{claim}
For any $c>0$ and all $\beta$ large enough depending on $c$, one has $
\nu_{\mathbb S}(E_\ell)\leq e^{-c \ell^2/L} $.\vadjust{\goodbreak}
\end{claim}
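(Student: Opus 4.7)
The plan is to reduce $\nu_{\bbS}(E_\ell)$ to a Gaussian maximal-inequality estimate for a one-dimensional random walk with i.i.d.\ increments, via an Ornstein--Zernike (OZ) decomposition of the SOS open contour. The reduction to the infinite strip $\bbS$ carried out in the preceding paragraphs is exactly what makes this possible, since OZ crucially relies on the horizontal shift-invariance of the measure.

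First, by the cluster expansion from Appendix~\ref{clusters}, the law $\nu_{\bbS}$ on contours between the shifted endpoints $A':=A+(0,(3/4)\ell)$ and $B':=B+(0,(3/4)\ell)$ is given by $\nu_{\bbS}(\Gamma)\propto\exp(-\beta|\Gamma|+\Psi_{\bbS}(\Gamma))$, with $\Psi_{\bbS}(\Gamma)=\sum_V\phi(V;\Gamma)$ and potentials $\phi$ shift-invariant and exponentially damped in $d(V)$, cf.\ properties (i)--(iii) following~\eqref{expansi}. The measure is also invariant under the reflection $(x,y)\mapsto(x,2\cdot(3/4)\ell - y)$ about the horizontal line through $A'$ and $B'$: the SOS Hamiltonian is invariant under $\eta\mapsto C-\eta$ for any constant $C$, and this combined spatial/value reflection preserves the Dobrushin-type boundary condition of $\nu_{\bbS}$.

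Second, I would invoke the OZ-style renewal decomposition (along the lines of \cite{DKS}*{Ch.~4} adapted to SOS, in the same spirit as the cluster-expansion manipulations of Sections~\ref{sec:dolore}--\ref{barbecue}): identify a sequence of cone points $A'=\xi_0,\xi_1,\dots,\xi_N=B'$ at which $\Gamma$ splits into i.i.d.\ irreducible ``animals''. Thanks to the exponential decay of $\phi(V;\Gamma)$, the resulting increments $\Delta_i=\xi_i-\xi_{i-1}$ are i.i.d.\ with exponential tails at rate $\asymp\beta$, strictly positive horizontal drift $v(\beta)>0$ bounded away from $0$, vertical variance $\sigma^2(\beta)=O(e^{-c\beta})$, and zero vertical drift by the reflection symmetry just noted. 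The event $E_\ell$ forces the vertical partial sums $S_k=\sum_{i\le k}\Delta_i\cdot e_2$ to satisfy $\max_{k\le N}S_k\ge \ell/4-O(\log L)$, while a separate large-deviation bound on the horizontal sums confines $N$ to $L/v\pm L^{1/2+o(1)}$ outside an event of probability $e^{-\Omega(L)}$. Doob's maximal inequality combined with a sub-exponential Bernstein bound for the centered random walk $(S_k)$ then yields
\[
\nu_{\bbS}\bigl(\max_{k\le N}S_k\ge \ell/4\bigr)\le \exp\bigl(-c\,\ell^{2}/L\bigr),
\]
with the constant $c$ arbitrarily large for $\beta$ large (since $\sigma^2\to 0$ as $\beta\to\infty$, making the Gaussian constant $\asymp 1/\sigma^2$ arbitrarily large). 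Since $\ell\le L/\log^2 L$ we have $\ell/L\to 0$, so the deviation $\ell/4$ sits comfortably inside the quadratic tail region of Bernstein's inequality and no Cram\'er correction is required.

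The main obstacle is Step~2: rigorously carrying out the OZ decomposition for the SOS open contour carrying the cluster-expanded decorations $\Psi_{\bbS}$. For Ising contours at low temperature this construction is classical, and for SOS it goes through by essentially the same arguments thanks to the exponential damping of $\phi(V;\Gamma)$; still, one has to check that (a) $\Gamma$ admits a positive density of cone/split points with $\nu_{\bbS}$-probability $1-e^{-\Omega(L)}$, and (b) conditional on their locations the intermediate animals are genuinely i.i.d.\ under the cluster-expanded weight. The reflection-symmetry argument that produces the centring of $(S_k)$ genuinely requires the full horizontal shift-invariance of $\nu_{\bbS}$ on the infinite strip, which is what made the preceding reductions from $Q$ to $\bbS$ (via the $G^+$-restriction and the replacement $\Psi_Q\to\Psi_{\bbS}$) indispensable.
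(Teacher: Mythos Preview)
Your OZ/random-walk picture is exactly the mechanism behind the bound, and it is what \cite{DKS}*{Section~4.15} proves (the paper simply invokes that result as a black box).  But there is a concrete gap in your reduction: you claim ``full horizontal shift-invariance of $\nu_{\bbS}$ on the infinite strip'', and this is false.  The strip is $\bbS=\{1,\dots,L\}\times\bbZ$, infinite in the \emph{vertical} direction and bounded by the two vertical lines $x=1,\,x=L$.  The vertical translation (and reflection) invariance you use for centring the increments is fine; horizontal translation invariance is not.  The decorations $\Psi_{\bbS}(\G)$ involve potentials $\phi(V;\G)$ that feel the vertical sides of $\bbS$ whenever $V$ is within $O(\log L)$ of them; in particular the first and last stretches of animals near $A',B'$ are weighted differently from the bulk ones, so item~(b) of your checklist --- that the animals are ``genuinely i.i.d.\ under the cluster-expanded weight'' --- actually fails for $\nu_{\bbS}$.

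The paper identifies exactly this issue (``decorations touching the boundary of $\bbS$ may behave differently from decorations inside $\bbS$'') and fixes it by one further reduction: it introduces the measure $\bbP$ on \emph{all} paths $A\to B$ in $\bbZ^2$ with weight $\exp(-\beta|\G|+\Psi_{\bbZ^2}(\G))$, for which full (horizontal and vertical) shift-invariance holds and \cite{DKS}*{Section~4.15} applies verbatim to give $\bbP(E_{\ell/4})\le e^{-c\ell^2/L}$.  The passage $\nu_{\bbS}\to\bbP$ is then controlled by writing $\nu_{\bbS}(E_{\ell/4})$ as a ratio of $\bbP$-expectations with Radon--Nikodym weight $e^{\Psi_{\bbS}-\Psi_{\bbZ^2}}$, and observing via~\eqref{eq:uffa} that $|\Psi_{\bbS}(\G)-\Psi_{\bbZ^2}(\G)|\le e^{-(\beta-\beta_0)}|\{b\in\G:\ \text{dist}(b,\bbS^c)\le\log^2 L\}|$.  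A second appeal to \cite{DKS}*{Section~4} bounds the exponential moment of this boundary-visit count by $e^{c'\log^2 L}$, and a cigar-region lower bound on the denominator (from \cite{DKS}*{Proposition~4.18}) costs only $e^{-c''\log^{2/\kappa}L}$; both are negligible against $e^{-c\ell^2/L}$ since $\ell\ge L^a$ with $a>1/2$.  Your scheme becomes correct once you insert this $\bbS\to\bbZ^2$ step before running OZ; attempting OZ directly on $\nu_{\bbS}$ would force you to redo the whole renewal construction with non-identically-distributed edge pieces, which is considerably more work.
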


\begin{pf} 
By translation invariance, we can assume that the path $\Gamma$ starts
at $A=(1,0)$, ends at $B=(L,0)$ and replace $E_\ell$ with $E_{\ell/4}$.
We would like to appeal to the results of Section~4.15 in \cite{DKS}.
For this purpose we need to tackle the fact that decorations touching
the boundary of $\mathbb{S}$ may behave differently from decorations
inside $\mathbb{S}$. We therefore introduce a third (!) probability
measure on \textit{all} paths $\Gamma$ between $A$ and $B$ (even those
going outside $\mathbb{S}$) denoted\vspace*{1pt} simply by
$\mathbb{P}(\cdot)$ and corresponding to the weight $e^{-\beta|\Gamma|
+ \Psi_{\mathbb{Z} ^2}(\Gamma)}$ and we write
\[
\nu_{\mathbb S}(\Gamma\mbox{ reaches height } \ell/4)
= \frac{\mathbb{E} (\Gamma\mbox{ reaches height } \ell/4;
\Gamma\in\mathbb{S};
e^{\Psi_\mathbb{S}(\Gamma)-\Psi_{\mathbb{Z}^2}(\Gamma)}
)}{\mathbb{E} (\Gamma\in\mathbb{S};
e^{\Psi_\mathbb{S}(\Gamma)-\Psi_{\mathbb{Z}^2}(\Gamma)} )}.
\]
Using Section~4.15 of \cite{DKS}, we get that
\[
\mathbb{P}(\Gamma\mbox{ reaches height } \ell/4)\leq e^{-c\ell^2/L}
\]
for some constant $c>0$. On the other hand, (\ref{eq-uffa}) implies
that
\[
\bigl|\Psi_{\mathbb{Z}^2}(\Gamma)-\Psi_\mathbb{S}(\Gamma)\bigr|\leq
e^{-(\beta-\beta
_0)} \bigl|b\in\Gamma\dvtx\operatorname{dist} \bigl(b,
\mathbb{S}^c \bigr) \leq\log(L)^2 \bigr|,
\]
which implies
\[
\mathbb{E} \bigl(e^{2 |\Psi_{\mathbb{Z}^2}(\Gamma)-\Psi_\mathbb
{S}(\Gamma)| } \bigr)\leq e^{c'\log(L)^2}
\]
for some constant $c'$, thanks to the large deviation results of
Section~4.15 in \cite{DKS}. Finally, thanks to~(\ref{eq-uffa}) and
Proposition~4.18 in \cite{DKS}, if $\mathcal{C}$ is the cigar-shaped
region with tips at $A,B$ defined by
\[
\mathcal{C}= \biggl\{(x_1,x_2)\in\mathbb{R}^2
\dvtx|x_2|\leq \biggl(\frac
{x_1(L-x_1)}{L} \biggr)^{1/2+\kappa}
\biggr\},
\]
\begin{eqnarray*}
\mathbb{E} \bigl(\Gamma\in\mathbb{S}; e^{\Psi_\mathbb
{S}(\Gamma)-\Psi_{\mathbb{Z}^2}(\Gamma)} \bigr)&\geq&\mathbb{E}
\bigl(\Gamma\in\mathcal{C}; e^{\Psi_\mathbb{S}(\Gamma)-\Psi
_{\mathbb{Z}^2}(\Gamma)} \bigr)
\\
&\geq& C \mathbb{P}(\Gamma\in\mathcal{C})
\geq e^{-c'' \log
(L)^{2/\kappa}},
\end{eqnarray*}
where the constant $C$ is a deterministic lower bound on $
e^{\Psi_\mathbb{S}(\Gamma)-\Psi_{\mathbb{Z}^2}(\Gamma)}$ for
$\Gamma\in\mathcal{C} $ obtained again using (\ref{eq-uffa}).
\end{pf}

\section{}\label{poschains}
Fix $a\in(0,1)$. Let $R$ be the intersection between
$\mathbb{Z}^2$ and a $L\times L^a$ rectangle, not necessarily parallel
to the
coordinate axes. Let $\Lambda\subset\mathbb{Z}^2$ be such that
$\Lambda$
contains $R$ and is contained in some $2L\times2L$ square.
A subset $\mathcal{C}=\{x_1,x_2,\ldots,x_k\}$ of $R$ will be called a
\textit{spanning chain} if
\begin{longlist}[(ii)]
\item[(i)] $d(x_i,x_{i+1})=1$ for all $i=1,\ldots, k-1$;

\item[(ii)] $\mathcal{C}$ connects the two shorter sides of $R$.
\end{longlist}
For a fixed $n\geq0$ let $\mathcal{F}_+$ ($\mathcal{F}_-$) be the event
that there exists a spanning chain where the surface height is at least
(at most) $n$.\vadjust{\goodbreak}

%
%
\begin{lemma}\label{quasi-rect.1}
For $\beta$ large enough 
%
%
\begin{equation}
\label{eq-222} \Pi_\Lambda^{n} \bigl(\mathcal{F}^c_+
\bigr)\leq\Pi_\Lambda^{n,f} \bigl(\mathcal{F}^c_+
\bigr)\leq e^{-cL^a}.
\end{equation}
Assume moreover that $\ell(\Lambda) e^{-4\beta(n+1)}\leq1$, where
$\ell(\Lambda)$ is the shortest side of the smallest rectangle
containing $\Lambda$. Then
%
%
\begin{equation}
\label{eq-27} \Pi_\Lambda^{n,f} \bigl(\mathcal{F}^c_-
\bigr)\leq\Pi_\Lambda^{n} \bigl(\mathcal{F}^c_-
\bigr)\leq e^{-cL^a}.
\end{equation}
\end{lemma}
Here, as in 
(\ref{eq-10}), the field is $f=\frac{1}L\sum_{y\in\Lambda} f_y$.

\begin{pf*}{Proof of Lemma~\ref{quasi-rect.1}}
We first observe that $\mathcal{F}_+$($\mathcal{F}_-$) is an increasing
(decreasing) event and therefore the first inequalities in
(\ref{eq-222}), (\ref{eq-27}) are trivial because the fields $f_y$ are
decreasing functions. Again by monotonicity $\Pi_\Lambda
^{n,f}(\mathcal{F}^c_+)$ is bounded from above by the probability
w.r.t. the SOS model $\hat\pi{}_\Lambda^{n,f}$ \textit{without} floor.
Moreover, $\mathcal{F}^c_+$($\mathcal{F}^c_-$) occurs iff there exists
a *-chain $\{y_1,\ldots,y_n\}$ connecting the two long opposite sides
of $R$ and such that $\eta_{y_i}\leq n-1$ ($\eta_{y_i}\geq n+1$) for
all $i$. In turn that implies the existence of a $(n-1)$-contour
($(n+1)$-contour) larger than~$L^{a}$.

As in the proof of Lemma~\ref{l:contourBound},
we get that
\[
\hat\pi{}_\Lambda^{n,f} \bigl(\gamma\mbox{ is a } (n-1)
\mbox{-contour } \bigr)\leq e^{-\beta
|\gamma|+ 1/L \sum_{x\in\Lambda_\gamma}\|f_x\|_{\infty}}\leq e^{-\beta/2|\gamma|},
\]
where in the last inequality we used $\|f_x\|_{\infty}\leq e^{-c
\beta}$ together with $|\Lambda_\gamma|\leq2L|\gamma|$. Simple counting
of $\gamma$ finishes the proof of (\ref{eq-222}).

Similarly, it follows from Proposition~\ref{p-starBound} that
\[
\Pi_\Lambda^{n} \bigl(\gamma\mbox{ is a $(n+1)$-contour}
\bigr) \leq e^{-\beta|\gamma|
+ C e^{-4\beta(n+1)}|\Lambda_\gamma|}.
\]

Isoperimetry gives $|\Lambda_\gamma|\leq\ell(\Lambda)|\gamma|$ which,
combined with the assumption $\ell(\Lambda) e^{-4\beta(n+1)}\leq1$,
implies
\[
\Pi_\Lambda^{n} \bigl(\gamma\mbox{ is a $(n+1)$-contour}
\bigr) \leq e^{-\beta/2|\gamma|}
\]
and the proof of (\ref{eq-27}) follows.
\end{pf*}

\section{Proof of inequalities (\texorpdfstring{\lowercase{\protect\ref{eq-13}}}{6.22}) and (\texorpdfstring{\lowercase{\protect\ref{eq-9.1}}}{6.29})}\label{app-13}

%
%
%

\begin{pf*}{Proof of Lemma~\ref{lemma:snello}}
Fix $\ell> 2$. By removing the field $f$ of (\ref{eq-10}), we only
increase the surface so to bound the\vspace*{-1pt} probability of the
decreasing event $G^+_\ell$ we may work in the model
$\pi^{H'}_{\Lambda_L}$, that is, the standard SOS model on $\Lambda_L$
with no field and floor/ceiling at height $0/n^+=\log L$. If $G^+_\ell$
fails, then for some $R\in\mathcal{R}$ we can find contours
$\{(\gamma_s,h_s)\}_{s\in\mathscr{S}}$ satisfying the hypothesis of
Proposition~\ref{p-starBound} each with $|\Lambda_{\gamma_s}\cap
R|\geq1$ and $h_s \geq H'+1$ such that $\bigcap_{s\in\mathscr{S}}
\mathscr{C}_{\gamma_s,h}$ holds, and that
%
%
\begin{equation}
\label{e-diagonalCovering} \sum_{s\in\mathscr{S}} |
\Lambda_{\gamma_s}\cap R | \geq\ell L/2. 
\end{equation}
For a given ensemble of contours as above, define a sequence of subsets
$W_i\subseteq\Lambda$~by
\begin{eqnarray*}
W_0 &=& \Lambda,
\\
W_i &=& \bigcup_{s\in\mathscr{S}\dvtx h_s=H'+i}
\Lambda_{\gamma_s}\qquad\mbox{for $i=1,\ldots,n^+-H'$}.
\end{eqnarray*}
Let $\mathcal{A}$ denote the set of all possible such collections of contours
$\{(\gamma_s,h_s)\}$. For all $i\geq0$, let
\[
a_i = |W_i \cap R|.
\]
Let $\mathcal{A}(\vec{a}) = \mathcal{A}(a_1,a_2,\ldots,a_{n^+-H'})$
denote all collections of contours matching a~given sequence of
$a_i$'s. Then~(\ref{e-diagonalCovering}) is equivalent to
$\sum_{i\geq1} a_i \geq\ell L/2$. Since $R$~is a diagonal,
$|\Lambda_{\gamma_s}\cap R| \leq\frac14 | \gamma_s| $ and so
%
%
\begin{equation}
\label{e-diagonalIso} \sum_{s\in\mathscr{S}} |
\gamma_s| \geq4\sum_{i=1}^{n^+ - H'}
a_i.
\end{equation}
For any $W\subseteq\Lambda$ let
\[
\mathcal{B}(W)=\sum_{(\gamma_1',\gamma_2',\ldots,\gamma'_m)} e^{-(\beta
/4) |\gamma|},
\]
where the sum is over all collections of edge-disjoint contours
$\{\gamma'_i\}$, with pairwise disjoint interiors
$\{\Lambda_{\gamma_i'}\}$ all contained in $W$ and with
$|\Lambda_{\gamma_i'}\cap R|\geq1$ for\vspace*{-1pt} all $i$. Any such
contour must have an edge adjacent to some $v\in W\cap R$ in the dual
lattice~$\mathbb{Z}^{2*}$. If $e$ is an edge in the dual lattice
$\mathbb {Z}^{2*}$, then there are at most $3^n$ contours $\gamma$ of
length $n$ containing $e$. Hence for large enough $\beta$,
\[
\mathcal{B}(W) \leq \Biggl(1+\sum_{n=4}^\infty3^n
e^{-(\beta/4) n
} \Biggr)^{4|W\cap R|} \leq\exp \bigl(|W\cap R| \bigr),
\]
since each contour must contain at least one edge adjacent to some
$v\in W\cap R$ in the dual lattice $\mathbb{Z}^{2*}$, there are at most
$4|W\cap R|$ such edges and the contours are edge-disjoint.

Now for $\{(\gamma_s,h_s)\}_{s\in\mathscr{S}} \in\mathcal{A}(\vec{a})$
by Proposition~\ref{p-starBound}
we have that
%
%
\begin{eqnarray}
\pi^{H'}_{\Lambda_L} \biggl( \bigcap
_{s\in\mathscr{S}}\mathscr{C}_{\gamma_s,h_s} \biggr)
&\leq&\exp \biggl(\sum_{s\in\mathscr{S}} \bigl(-\beta|\gamma_s|+C_0
|\Lambda_{\gamma_s}|e^{-4\beta h_s} \bigr) \biggr)
\nonumber\\[-9pt]\\[-9pt]
&\leq&\exp \biggl( -\frac34 \beta\sum_{s\in\mathscr{S}} |
\gamma_s| \biggr) \nonumber
\end{eqnarray}
for any $\beta\geq C_0$ since $e^{-4\beta h_s} \leq e^{-4\beta
(H+1)}\leq L^{-1}$ and $|\Lambda_{\gamma_s}|\leq(L/4)|\gamma_s|$ for
any contour $\gamma_s$ by the isoperimetric inequality in $\mathbb{Z}^2$.
Substituting this expression, we have that
\begin{eqnarray*}
&& \sum_{\{(\gamma_s,h_s)\}_{s\in\mathscr{S}} \in\mathcal{A}(\vec
{a})}\pi^{H'}_{\Lambda_L}\biggl( \bigcap_{s\in\mathscr{S}}\mathscr{C}_{\gamma_s,h_s}
\biggr)
\\
&&\qquad \leq\sum_{\{(\gamma_s,h_s)\}_{s\in\mathscr{S}}\in
\mathcal{A}(\vec{a}) } \exp \biggl( -\frac34\beta
\sum_{s\in\mathscr{S}} |\gamma_s| \biggr)
\\
&&\qquad \leq\exp \Biggl( -2\beta\sum_{i=1}^{n^+-H'}
a_i \Biggr) \sum_{\{
(\gamma_s,h_s)\}_{s\in\mathscr{S}}\in\mathcal{A}(\vec{a})} \exp \biggl( -(\beta/4) \sum_{s\in\mathscr{S}} |\gamma_s| \biggr),
\end{eqnarray*}
where the last inequality is by~(\ref{e-diagonalIso}). This in turn is
at most
\begin{eqnarray*}
\exp \Biggl( -2\beta\sum_{i=1}^{n^+-H'}
a_i \Biggr) \prod_{i=1}^{n^+-H'}
\mathcal{B} (W_{i-1} )
&\leq&\exp \Biggl( -2\beta\sum
_{i=1}^{n^+-H'} a_i + \sum_{i=1}^{n^+-H'} a_{i-1} \Biggr)
\\
&\leq&\exp \biggl( -\frac34\beta\ell L \biggr).
\end{eqnarray*}
%
The final inequality follows for large $\beta$ since $a_0=L$.
As there are at most $L^{n^+-H'} \leq L^{\log L}$ choices for
$\vec{a}=(a_1,a_2,\ldots,a_{n^+-H'})$, we have that
\begin{eqnarray*}
\pi^{H'}_{\Lambda_L} \bigl( G_\ell^+ \bigr) &\geq& 1-
\sum_{\vec
{a}} \sum_{\{(\gamma_s,h_s)\}_{s\in\mathscr{S}} \in\mathcal
{A}(\vec{a})}\pi
^{H'}_{\Lambda_L} \biggl( \bigcap_{s\in\mathscr{S}}
\mathscr{C}_{\gamma_s,h_s} \biggr)
\\
&\geq&1- L^{\log L} \exp \biggl( -\frac34\beta\ell L \biggr)
\\
&\geq& 1- \exp \biggl( -\frac\beta2 \ell L \biggr)
\end{eqnarray*}
for large $\beta$, as required.
\end{pf*}

Equation~(\ref{eq-13}) follows similarly with a simpler proof.
\end{appendix}

\section*{Acknowledgments}
We are grateful to S. Shlosman for valuable discussions. This work was
initiated while P.~Caputo, F.~Martinelli and\break  F.~L.~Toninelli were
visiting the Theory Group of Microsoft Research, Redmond. They thank
the Theory Group for its hospitality and for creating a stimulating
research environment.



%

\printaddresses


\begin{thebibliography}{53}

\bibitem{ADM}
\begin{barticle}[mr]
\bauthor{\bsnm{Alexander},~\bfnm{Kenneth~S.}\binits{K.~S.}},
 \bauthor{\bsnm{Dunlop},~\bfnm{Fran{\c{c}}ois}\binits{F.}} \AND
 \bauthor{\bsnm{Miracle-Sol{\'e}},~\bfnm{Salvador}\binits{S.}}
(\byear{2011}).
\btitle{Layering and wetting transitions for an {SOS} interface}.
\bjournal{J. Stat. Phys.}
\bvolume{142}
\bpages{524--576}.
\bid{doi={10.1007/s10955-011-0127-9}, issn={0022-4715}, mr={2771044}}
\bptok{imsref}%
\end{barticle}
\endbibitem

\bibitem{ABSZ}
\begin{barticle}[mr]
\bauthor{\bsnm{Avron},~\bfnm{J.~E.}\binits{J.~E.}}, \bauthor{\bparticle{van}
 \bsnm{Beijeren},~\bfnm{H.}\binits{H.}},
 \bauthor{\bsnm{Schulman},~\bfnm{L.~S.}\binits{L.~S.}} \AND
 \bauthor{\bsnm{Zia},~\bfnm{R.~K.~P.}\binits{R.~K.~P.}}
(\byear{1982}).
\btitle{Roughening transition, surface tension and equilibrium droplet shapes
 in a two-dimensional {I}sing system}.
\bjournal{J. Phys. A}
\bvolume{15}
\bpages{L81--L86}.
\bid{issn={0305-4470}, mr={0642282}}
\bptok{imsref}%
\end{barticle}
\endbibitem

\bibitem{Baxter}
\begin{bbook}[mr]
\bauthor{\bsnm{Baxter},~\bfnm{Rodney~J.}\binits{R.~J.}}
(\byear{1989}).
\btitle{Exactly Solved Models in Statistical Mechanics}.
\bpublisher{Academic Press},
 \blocation{London}.
\bnote{Reprint of the 1982 original}.
\bid{mr={0998375}}
\bptok{imsref}%
\end{bbook}
\endbibitem

\bibitem{Bolt}
\begin{bincollection}[mr]
\bauthor{\bsnm{Bolthausen},~\bfnm{Erwin}\binits{E.}}
(\byear{2000}).
\btitle{Random walk representations and entropic repulsion for gradient
 models}.
In \bbooktitle{Infinite Dimensional Stochastic Analysis ({A}msterdam, 1999)}
\bpages{55--83}.
\bpublisher{R. Neth. Acad. Arts Sci.}, \blocation{Amsterdam}.
\bid{mr={1831411}}
\bptok{imsref}%
\end{bincollection}
\endbibitem

\bibitem{Bolt2}
\begin{binproceedings}[mr]
\bauthor{\bsnm{Bolthausen},~\bfnm{Erwin}\binits{E.}}
(\byear{2002}).
\btitle{Localization--delocalization phenomena for random interfaces}.
In \bbooktitle{Proceedings of the {I}nternational {C}ongress of {M}athematicians 3}
\bpages{25--39}.
\bpublisher{Higher Ed. Press}, \blocation{Beijing}.
\bid{mr={1957516}}
\bptok{imsref}%
\end{binproceedings}
\endbibitem

\bibitem{BDG}
\begin{barticle}[mr]
\bauthor{\bsnm{Bolthausen},~\bfnm{Erwin}\binits{E.}},
 \bauthor{\bsnm{Deuschel},~\bfnm{Jean-Dominique}\binits{J.-D.}} \AND
 \bauthor{\bsnm{Giacomin},~\bfnm{Giambattista}\binits{G.}}
(\byear{2001}).
\btitle{Entropic repulsion and the maximum of the two-dimensional harmonic
 crystal}.
\bjournal{Ann. Probab.}
\bvolume{29}
\bpages{1670--1692}.
\bid{doi={10.1214/aop/1015345767}, issn={0091-1798}, mr={1880237}}
\bptok{imsref}%
\end{barticle}
\endbibitem

\bibitem{BDZ}
\begin{barticle}[mr]
\bauthor{\bsnm{Bolthausen},~\bfnm{Erwin}\binits{E.}},
 \bauthor{\bsnm{Deuschel},~\bfnm{Jean-Dominique}\binits{J.-D.}} \AND
 \bauthor{\bsnm{Zeitouni},~\bfnm{Ofer}\binits{O.}}
(\byear{1995}).
\btitle{Entropic repulsion of the lattice free field}.
\bjournal{Comm. Math. Phys.}
\bvolume{170}
\bpages{417--443}.
\bid{issn={0010-3616}, mr={1334403}}
\bptok{imsref}%
\end{barticle}
\endbibitem

\bibitem{BW}
\begin{barticle}[mr]
\bauthor{\bsnm{Brandenberger},~\bfnm{R.}\binits{R.}} \AND
 \bauthor{\bsnm{Wayne},~\bfnm{C.~E.}\binits{C.~E.}}
(\byear{1982}).
\btitle{Decay of correlations in surface models}.
\bjournal{J.~Stat. Phys.}
\bvolume{27}
\bpages{425--440}.
\bid{doi={10.1007/BF01011084}, issn={0022-4715}, mr={0659803}}
\bptok{imsref}%
\end{barticle}
\endbibitem

\bibitem{BEF}
\begin{barticle}[mr]
\bauthor{\bsnm{Bricmont},~\bfnm{J.}\binits{J.}},
 \bauthor{\bsnm{El~Mellouki},~\bfnm{A.}\binits{A.}} \AND
 \bauthor{\bsnm{Fr{\"o}hlich},~\bfnm{J.}\binits{J.}}
(\byear{1986}).
\btitle{Random surfaces in statistical mechanics: Roughening, rounding,
 wetting, $\ldots $}
\bjournal{J. Stat. Phys.}
\bvolume{42}
\bpages{743--798}.
\bid{doi={10.1007/BF01010444}, issn={0022-4715}, mr={0833220}}
\bptok{imsref}%
\end{barticle}
\endbibitem

\bibitem{BFL}
\begin{barticle}[mr]
\bauthor{\bsnm{Bricmont},~\bfnm{Jean}\binits{J.}},
 \bauthor{\bsnm{Fontaine},~\bfnm{Jean-Raymond}\binits{J.-R.}} \AND
 \bauthor{\bsnm{Lebowitz},~\bfnm{Joel~L.}\binits{J.~L.}}
(\byear{1982}).
\btitle{Surface tension, percolation, and roughening}.
\bjournal{J. Stat. Phys.}
\bvolume{29}
\bpages{193--203}.
\bid{doi={10.1007/BF01020782}, issn={0022-4715}, mr={0684948}}
\bptok{imsref}%
\end{barticle}
\endbibitem

\bibitem{CMST}
\begin{barticle}[mr]
\bauthor{\bsnm{Caputo},~\bfnm{Pietro}\binits{P.}},
 \bauthor{\bsnm{Martinelli},~\bfnm{Fabio}\binits{F.}},
 \bauthor{\bsnm{Simenhaus},~\bfnm{Fran{\c{c}}ois}\binits{F.}} \AND
 \bauthor{\bsnm{Toninelli},~\bfnm{Fabio~Lucio}\binits{F.~L.}}
(\byear{2011}).
\btitle{``{Z}ero'' temperature stochastic 3{D} {I}sing model and dimer covering
 fluctuations: A first step towards interface mean curvature motion}.
\bjournal{Comm. Pure Appl. Math.}
\bvolume{64}
\bpages{778--831}.
\bid{doi={10.1002/cpa.20359}, issn={0010-3640}, mr={2663712}}
\bptok{imsref}%
\end{barticle}
\endbibitem

\bibitem{CMT}
\begin{barticle}[mr]
\bauthor{\bsnm{Caputo},~\bfnm{Pietro}\binits{P.}},
 \bauthor{\bsnm{Martinelli},~\bfnm{Fabio}\binits{F.}} \AND
 \bauthor{\bsnm{Toninelli},~\bfnm{Fabio~Lucio}\binits{F.~L.}}
(\byear{2012}).
\btitle{Mixing times of monotone surfaces and {SOS} interfaces: A mean
 curvature approach}.
\bjournal{Comm. Math. Phys.}
\bvolume{311}
\bpages{157--189}.
\bid{doi={10.1007/s00220-012-1425-z}, issn={0010-3616}, mr={2892467}}
\bptok{imsref}%
\end{barticle}
\endbibitem

\bibitem{CM}
\begin{barticle}[mr]
\bauthor{\bsnm{Cesi},~\bfnm{Filippo}\binits{F.}} \AND
 \bauthor{\bsnm{Martinelli},~\bfnm{Fabio}\binits{F.}}
(\byear{1996}).
\btitle{On the layering transition of an {SOS} surface interacting with a wall.
 {II}. {T}he {G}lauber dynamics}.
\bjournal{Comm. Math. Phys.}
\bvolume{177}
\bpages{173--201}.
\bid{issn={0010-3616}, mr={1382225}}
\bptok{imsref}%
\end{barticle}
\endbibitem

\bibitem{CSS}
\begin{barticle}[mr]
\bauthor{\bsnm{Chayes},~\bfnm{L.}\binits{L.}},
 \bauthor{\bsnm{Schonmann},~\bfnm{R.~H.}\binits{R.~H.}} \AND
 \bauthor{\bsnm{Swindle},~\bfnm{G.}\binits{G.}}
(\byear{1995}).
\btitle{Lifshitz' law for the volume of a two-dimensional droplet at zero
 temperature}.
\bjournal{J. Stat. Phys.}
\bvolume{79}
\bpages{821--831}.
\bid{doi={10.1007/BF02181205}, issn={0022-4715}, mr={1330364}}
\bptok{imsref}%
\end{barticle}
\endbibitem

\bibitem{DG}
\begin{barticle}[mr]
\bauthor{\bsnm{Deuschel},~\bfnm{Jean-Dominique}\binits{J.-D.}} \AND
 \bauthor{\bsnm{Giacomin},~\bfnm{Giambattista}\binits{G.}}
(\byear{2000}).
\btitle{Entropic repulsion for massless fields}.
\bjournal{Stochastic Process. Appl.}
\bvolume{89}
\bpages{333--354}.
\bid{doi={10.1016/S0304-4149(00)00030-2}, issn={0304-4149}, mr={1780295}}
\bptok{imsref}%
\end{barticle}
\endbibitem

\bibitem{DN}
\begin{barticle}[mr]
\bauthor{\bsnm{Deuschel},~\bfnm{Jean-Dominique}\binits{J.-D.}} \AND
 \bauthor{\bsnm{Nishikawa},~\bfnm{Takao}\binits{T.}}
(\byear{2007}).
\btitle{The dynamic of entropic repulsion}.
\bjournal{Stochastic Process. Appl.}
\bvolume{117}
\bpages{575--595}.
\bid{doi={10.1016/j.spa.2006.09.007}, issn={0304-4149}, mr={2320950}}
\bptok{imsref}%
\end{barticle}
\endbibitem

\bibitem{DSa}
\begin{barticle}[mr]
\bauthor{\bsnm{Diaconis},~\bfnm{Persi}\binits{P.}} \AND
 \bauthor{\bsnm{Saloff-Coste},~\bfnm{Laurent}\binits{L.}}
(\byear{1993}).
\btitle{Comparison theorems for reversible {M}arkov chains}.
\bjournal{Ann. Appl. Probab.}
\bvolume{3}
\bpages{696--730}.
\bid{issn={1050-5164}, mr={1233621}}
\bptok{imsref}%
\end{barticle}
\endbibitem

\bibitem{DSt}
\begin{barticle}[mr]
\bauthor{\bsnm{Diaconis},~\bfnm{Persi}\binits{P.}} \AND
 \bauthor{\bsnm{Stroock},~\bfnm{Daniel}\binits{D.}}
(\byear{1991}).
\btitle{Geometric bounds for eigenvalues of {M}arkov chains}.
\bjournal{Ann. Appl. Probab.}
\bvolume{1}
\bpages{36--61}.
\bid{issn={1050-5164}, mr={1097463}}
\bptok{imsref}%
\end{barticle}
\endbibitem

\bibitem{DKS}
\begin{bbook}[mr]
\bauthor{\bsnm{Dobrushin},~\bfnm{R.}\binits{R.}},
 \bauthor{\bsnm{Koteck{\'y}},~\bfnm{R.}\binits{R.}} \AND
 \bauthor{\bsnm{Shlosman},~\bfnm{S.}\binits{S.}}
(\byear{1992}).
\btitle{Wulff Construction: A~Global Shape from Local Interaction}.
\bseries{Translations of Mathematical Monographs}
\bvolume{104}.
\bpublisher{Amer. Math. Soc.}, \blocation{Providence, RI}.
\bid{mr={1181197}}
\bptok{imsref}%
\end{bbook}
\endbibitem

\bibitem{FFNV}
\begin{barticle}[mr]
\bauthor{\bsnm{Ferrari},~\bfnm{Pablo~A.}\binits{P.~A.}},
 \bauthor{\bsnm{Fontes},~\bfnm{Luiz R.~G.}\binits{L.~R.~G.}},
 \bauthor{\bsnm{Niederhauser},~\bfnm{Beat~M.}\binits{B.~M.}} \AND
 \bauthor{\bsnm{Vachkovskaia},~\bfnm{Marina}\binits{M.}}
(\byear{2004}).
\btitle{The serial harness interacting with a wall}.
\bjournal{Stochastic Process. Appl.}
\bvolume{114}
\bpages{175--190}.
\bid{doi={10.1016/j.spa.2004.05.003}, issn={0304-4149}, mr={2094152}}
\bptok{imsref}%
\end{barticle}
\endbibitem

\bibitem{FH}
\begin{barticle}[auto:STB|2013/10/14|10:36:11]
\bauthor{\bsnm{Fisher},~\bfnm{Daniel~S.}\binits{D.~S.}} \AND
 \bauthor{\bsnm{Huse},~\bfnm{David~A.}\binits{D.~A.}}
(\byear{1987}).
\btitle{Dynamics of droplet fluctuations in pure and random Ising systems}.
\bjournal{Phys. Rev. B}
\bvolume{35}
\bpages{6841--6846}.
\bptok{imsref}%
\end{barticle}
\endbibitem

\bibitem{Fisher}
\begin{barticle}[mr]
\bauthor{\bsnm{Fisher},~\bfnm{Michael~E.}\binits{M.~E.}}
(\byear{1984}).
\btitle{Walks, walls, wetting, and melting}.
\bjournal{J. Stat. Phys.}
\bvolume{34}
\bpages{667--729}.
\bid{doi={10.1007/BF01009436}, issn={0022-4715}, mr={0751710}}
\bptok{imsref}%
\end{barticle}
\endbibitem

\bibitem{FSS}
\begin{barticle}[mr]
\bauthor{\bsnm{Fontes},~\bfnm{L.~R.}\binits{L.~R.}},
 \bauthor{\bsnm{Schonmann},~\bfnm{R.~H.}\binits{R.~H.}} \AND
 \bauthor{\bsnm{Sidoravicius},~\bfnm{V.}\binits{V.}}
(\byear{2002}).
\btitle{Stretched exponential fixation in stochastic {I}sing models at zero
 temperature}.
\bjournal{Comm. Math. Phys.}
\bvolume{228}
\bpages{495--518}.
\bid{doi={10.1007/s002200200658}, issn={0010-3616}, mr={1918786}}
\bptok{imsref}%
\end{barticle}
\endbibitem

\bibitem{FKG}
\begin{barticle}[mr]
\bauthor{\bsnm{Fortuin},~\bfnm{C.~M.}\binits{C.~M.}},
 \bauthor{\bsnm{Kasteleyn},~\bfnm{P.~W.}\binits{P.~W.}} \AND
 \bauthor{\bsnm{Ginibre},~\bfnm{J.}\binits{J.}}
(\byear{1971}).
\btitle{Correlation inequalities on some partially ordered sets}.
\bjournal{Comm. Math. Phys.}
\bvolume{22}
\bpages{89--103}.
\bid{issn={0010-3616}, mr={0309498}}
\bptok{imsref}%
\end{barticle}
\endbibitem

\bibitem{FS1}
\begin{barticle}[mr]
\bauthor{\bsnm{Fr{\"o}hlich},~\bfnm{J{\"u}rg}\binits{J.}} \AND
 \bauthor{\bsnm{Spencer},~\bfnm{Thomas}\binits{T.}}
(\byear{1981}).
\btitle{Kosterlitz--Thouless transition in the two-dimensional plane rotator
 and {C}oulomb gas}.
\bjournal{Phys. Rev. Lett.}
\bvolume{46}
\bpages{1006--1009}.
\bid{doi={10.1103/PhysRevLett.46.1006}, issn={0031-9007}, mr={0607429}}
\bptok{imsref}%
\end{barticle}
\endbibitem

\bibitem{FS2}
\begin{barticle}[mr]
\bauthor{\bsnm{Fr{\"o}hlich},~\bfnm{J{\"u}rg}\binits{J.}} \AND
 \bauthor{\bsnm{Spencer},~\bfnm{Thomas}\binits{T.}}
(\byear{1981}).
\btitle{The {K}osterlitz--{T}houless transition in two-dimensional abelian spin
 systems and the {C}oulomb gas}.
\bjournal{Comm. Math. Phys.}
\bvolume{81}
\bpages{527--602}.
\bid{issn={0010-3616}, mr={0634447}}
\bptok{imsref}%
\end{barticle}
\endbibitem

\bibitem{FS3}
\begin{bincollection}[mr]
\bauthor{\bsnm{Fr{\"o}hlich},~\bfnm{J{\"u}rg}\binits{J.}} \AND
 \bauthor{\bsnm{Spencer},~\bfnm{Thomas}\binits{T.}}
(\byear{1983}).
\btitle{The {B}ere\v zinski\u\i --{K}osterlitz--{T}houless transition
 (energy--entropy arguments and renormalization in defect gases)}.
In \bbooktitle{Scaling and Self-similarity in Physics}.
\bseries{Progress in Probability}
\bvolume{7}
\bpages{29--138}.
\bpublisher{Birkh\"auser}, \blocation{Boston, MA}.
\bid{mr={0733469}}
\bptok{imsref}%
\end{bincollection}
\endbibitem

\bibitem{GMM}
\begin{bincollection}[auto:STB|2013/10/14|10:36:11]
\bauthor{\bsnm{Gallavotti},~\bfnm{G.}\binits{G.}},
 \bauthor{\bsnm{Martin-L{\"o}f},~\bfnm{A.}\binits{A.}} \AND
 \bauthor{\bsnm{Miracle-Sol{\'e}},~\bfnm{S.}\binits{S.}}
(\byear{1973}).
\btitle{Some problems connected with the description of coexisting phases at
 low temperatures in the Ising model}.
In \bbooktitle{Statistical Mechanics and Mathematical Problems}
(\beditor{\binits{A.}\bfnm{A.}~\bsnm{Lenard}}, ed.).
\bseries{Lecture Notes in Physics}
\bvolume{20}
\bpages{162--204}.
\bpublisher{Springer},
\blocation{Berlin}.
\bptok{imsref}%
\end{bincollection}
\endbibitem

\bibitem{Grimmett}
\begin{bbook}[mr]
\bauthor{\bsnm{Grimmett},~\bfnm{Geoffrey}\binits{G.}}
(\byear{1999}).
\btitle{Percolation},
\bedition{2nd} ed.
\bpublisher{Springer}, \blocation{Berlin}.
\bid{mr={1707339}}
\bptok{imsref}%
\end{bbook}
\endbibitem

\bibitem{JS}
\begin{barticle}[mr]
\bauthor{\bsnm{Jerrum},~\bfnm{Mark}\binits{M.}} \AND
 \bauthor{\bsnm{Sinclair},~\bfnm{Alistair}\binits{A.}}
(\byear{1989}).
\btitle{Approximating the permanent}.
\bjournal{SIAM J. Comput.}
\bvolume{18}
\bpages{1149--1178}.
\bid{doi={10.1137/0218077}, issn={0097-5397}, mr={1025467}}
\bptok{imsref}%
\end{barticle}
\endbibitem

\bibitem{KOS}
\begin{barticle}[mr]
\bauthor{\bsnm{Kenyon},~\bfnm{Richard}\binits{R.}},
 \bauthor{\bsnm{Okounkov},~\bfnm{Andrei}\binits{A.}} \AND
 \bauthor{\bsnm{Sheffield},~\bfnm{Scott}\binits{S.}}
(\byear{2006}).
\btitle{Dimers and amoebae}.
\bjournal{Ann. of Math. (2)}
\bvolume{163}
\bpages{1019--1056}.
\bid{doi={10.4007/annals.2006.163.1019}, issn={0003-486X}, mr={2215138}}
\bptok{imsref}%
\end{barticle}
\endbibitem

\bibitem{KP}
\begin{barticle}[mr]
\bauthor{\bsnm{Koteck{\'y}},~\bfnm{R.}\binits{R.}} \AND
 \bauthor{\bsnm{Preiss},~\bfnm{D.}\binits{D.}}
(\byear{1986}).
\btitle{Cluster expansion for abstract polymer models}.
\bjournal{Comm. Math. Phys.}
\bvolume{103}
\bpages{491--498}.
\bid{issn={0010-3616}, mr={0832923}}
\bptok{imsref}%
\end{barticle}
\endbibitem

\bibitem{LST}
\begin{bmisc}[auto:STB|2013/10/14|10:36:11]
\bauthor{\bsnm{Lacoin},~\bfnm{H.}\binits{H.}},
 \bauthor{\bsnm{Simenhaus},~\bfnm{F.}\binits{F.}} \AND
 \bauthor{\bsnm{Toninelli},~\bfnm{F.~L.}\binits{F.~L.}}
(\byear{2014}).
\bhowpublished{Zero-temperature 2D Ising model and anisotropic curve-shortening
 flow. \textit{J. Eur. Math. Soc.} To appear. Available at
 arXiv:\arxivurl{1112.3160}.}
\bptok{imsref}%
\end{bmisc}
\endbibitem

\bibitem{LPW}
\begin{bbook}[mr]
\bauthor{\bsnm{Levin},~\bfnm{David~A.}\binits{D.~A.}},
 \bauthor{\bsnm{Peres},~\bfnm{Yuval}\binits{Y.}} \AND
 \bauthor{\bsnm{Wilmer},~\bfnm{Elizabeth~L.}\binits{E.~L.}}
(\byear{2008}).
\btitle{Markov Chains and Mixing Times}.
\bpublisher{Amer. Math. Soc.}, \blocation{Providence, RI}.
\bid{mr={2466937}}
\bptnote{check year}%
\bptok{imsref}%
\end{bbook}
\endbibitem

\bibitem{Lifshitz}
\begin{barticle}[auto:STB|2013/10/14|10:36:11]
\bauthor{\bsnm{Lifshitz},~\bfnm{Ilya~M.}\binits{I.~M.}}
(\byear{1962}).
\btitle{Kinetics of ordering during second-order phase transitions}.
\bjournal{Sov. Phys. JETP}
\bvolume{15}
\bpages{939--942}.
\bptok{imsref}%
\end{barticle}
\endbibitem

\bibitem{LMST}
\begin{barticle}[mr]
\bauthor{\bsnm{Lubetzky},~\bfnm{Eyal}\binits{E.}},
 \bauthor{\bsnm{Martinelli},~\bfnm{Fabio}\binits{F.}},
 \bauthor{\bsnm{Sly},~\bfnm{Allan}\binits{A.}} \AND
 \bauthor{\bsnm{Toninelli},~\bfnm{Fabio~Lucio}\binits{F.~L.}}
(\byear{2013}).
\btitle{Quasi-polynomial mixing of the 2{D} stochastic {I}sing model with
 ``plus'' boundary up to criticality}.
\bjournal{J. Eur. Math. Soc. (JEMS)}
\bvolume{15}
\bpages{339--386}.
\bid{doi={10.4171/JEMS/363}, issn={1435-9855}, mr={3017041}}
\bptok{imsref}%
\end{barticle}
\endbibitem

\bibitem{Martinelli94}
\begin{barticle}[mr]
\bauthor{\bsnm{Martinelli},~\bfnm{F.}\binits{F.}}
(\byear{1994}).
\btitle{On the two-dimensional dynamical {I}sing model in the phase coexistence
 region}.
\bjournal{J. Stat. Phys.}
\bvolume{76}
\bpages{1179--1246}.
\bid{doi={10.1007/BF02187060}, issn={0022-4715}, mr={1298100}}
\bptok{imsref}%
\end{barticle}
\endbibitem

\bibitem{MS}
\begin{bincollection}[mr]
\bauthor{\bsnm{Martinelli},~\bfnm{Fabio}\binits{F.}} \AND
 \bauthor{\bsnm{Sinclair},~\bfnm{Alistair}\binits{A.}}
(\byear{2009}).
\btitle{Mixing time for the solid-on-solid model}.
In \bbooktitle{S{TOC}'09---{P}roceedings of the 2009 {ACM} {I}nternational
 {S}ymposium on {T}heory of {C}omputing}
\bpages{571--580}.
\bpublisher{ACM}, \blocation{New York}.
\bid{mr={2780103}}
\bptok{imsref}%
\end{bincollection}
\endbibitem

\bibitem{MT}
\begin{barticle}[mr]
\bauthor{\bsnm{Martinelli},~\bfnm{Fabio}\binits{F.}} \AND
 \bauthor{\bsnm{Toninelli},~\bfnm{Fabio~Lucio}\binits{F.~L.}}
(\byear{2010}).
\btitle{On the mixing time of the 2{D} stochastic {I}sing model with ``plus''
 boundary conditions at low temperature}.
\bjournal{Comm. Math. Phys.}
\bvolume{296}
\bpages{175--213}.
\bid{doi={10.1007/s00220-009-0963-5}, issn={0010-3616}, mr={2606632}}
\bptok{imsref}%
\end{barticle}
\endbibitem

\bibitem{PW}
\begin{bmisc}[auto:STB|2013/10/14|10:36:11]
\bauthor{\bsnm{Peres},~\bfnm{Yuval}\binits{Y.}} \AND
 \bauthor{\bsnm{Winkler},~\bfnm{Peter}\binits{P.}}
(\byear{2011}).
\bhowpublished{Can extra updates delay mixing? Available at
 arXiv:\arxivurl{1112.0603}}.
\bptok{imsref}%
\end{bmisc}
\endbibitem

\bibitem{PV}
\begin{bincollection}[mr]
\bauthor{\bsnm{Privman},~\bfnm{V.}\binits{V.}} \AND
 \bauthor{\bsnm{{\v{S}}vraki{\'c}},~\bfnm{N.~M.}\binits{N.~M.}}
(\byear{1989}).
\btitle{Line interfaces in two dimensions: Solid-on-solid models}.
In \bbooktitle{Directed Models of Polymers, Interfaces, and Clusters: Scaling
 and Finite-Size Properties}.
\bseries{Lecture Notes in Physics}
\bvolume{338}
\bpages{32--60}.
\bpublisher{Springer}, \blocation{Berlin}.
\bid{mr={1016150}}
\bptok{imsref}%
\end{bincollection}
\endbibitem

\bibitem{SaloffCoste}
\begin{bincollection}[mr]
\bauthor{\bsnm{Saloff-Coste},~\bfnm{Laurent}\binits{L.}}
(\byear{1997}).
\btitle{Lectures on finite {M}arkov chains}.
In \bbooktitle{Lectures on Probability Theory and Statistics ({S}aint-{F}lour,
 1996)}.
\bseries{Lecture Notes in Math.}
\bvolume{1665}
\bpages{301--413}.
\bpublisher{Springer}, \blocation{Berlin}.
\bid{doi={10.1007/BFb0092621}, mr={1490046}}
\bptok{imsref}%
\end{bincollection}
\endbibitem

\bibitem{SC}
\begin{binproceedings}[mr]
\bauthor{\bsnm{Schonmann},~\bfnm{Roberto~H.}\binits{R.~H.}}
(\byear{1998}).
\btitle{Metastability and the {I}sing model}.
In \bbooktitle{Proceedings of the {I}nternational {C}ongress of {M}athematicians}
\bvolume{3}
\bpages{173--181 (electronic)}.
\bid{issn={1431-0635}, mr={1648152}}
\bptok{imsref}%
\end{binproceedings}
\endbibitem

\bibitem{SS}
\begin{barticle}[mr]
\bauthor{\bsnm{Schonmann},~\bfnm{Roberto~H.}\binits{R.~H.}} \AND
 \bauthor{\bsnm{Shlosman},~\bfnm{Senya~B.}\binits{S.~B.}}
(\byear{1998}).
\btitle{Wulff droplets and the metastable relaxation of kinetic {I}sing
 models}.
\bjournal{Comm. Math. Phys.}
\bvolume{194}
\bpages{389--462}.
\bid{doi={10.1007/s002200050363}, issn={0010-3616}, mr={1627669}}
\bptok{imsref}%
\end{barticle}
\endbibitem

\bibitem{Sinai}
\begin{bbook}[mr]
\bauthor{\bsnm{Sina{\u\i}},~\bfnm{Ya.~G.}\binits{Y.~G.}}
(\byear{1982}).
\btitle{Theory of Phase Transitions: Rigorous Results}.
\bseries{International Series in Natural Philosophy}
\bvolume{108}.
\bpublisher{Pergamon Press}, \blocation{Oxford}.
\bid{mr={0691854}}
\bptok{imsref}%
\end{bbook}
\endbibitem

\bibitem{Sinclair}
\begin{barticle}[mr]
\bauthor{\bsnm{Sinclair},~\bfnm{Alistair}\binits{A.}}
(\byear{1992}).
\btitle{Improved bounds for mixing rates of {M}arkov chains and multicommodity
 flow}.
\bjournal{Combin. Probab. Comput.}
\bvolume{1}
\bpages{351--370}.
\bid{doi={10.1017/S0963548300000390}, issn={0963-5483}, mr={1211324}}
\bptok{imsref}%
\end{barticle}
\endbibitem

\bibitem{Temperley}
\begin{barticle}[mr]
\bauthor{\bsnm{Temperley},~\bfnm{H.~N.~V.}\binits{H.~N.~V.}}
(\byear{1952}).
\btitle{Statistical mechanics and the partition of numbers. {II}. {T}he form of
 crystal surfaces}.
\bjournal{Math. Proc. Cambridge Philos. Soc.}
\bvolume{48}
\bpages{683--697}.
\bid{mr={0053036}}
\bptok{imsref}%
\end{barticle}
\endbibitem

\bibitem{Temperley56}
\begin{barticle}[mr]
\bauthor{\bsnm{Temperley},~\bfnm{H.~N.~V.}\binits{H.~N.~V.}}
(\byear{1956}).
\btitle{Combinatorial problems suggested by the statistical mechanics of
 domains and of rubber-like molecules}.
\bjournal{Phys. Rev. (2)}
\bvolume{103}
\bpages{1--16}.
\bid{mr={0078256}}
\bptok{imsref}%
\end{barticle}
\endbibitem

\bibitem{vanBeijeren1}
\begin{barticle}[mr]
\bauthor{\bparticle{van} \bsnm{Beijeren},~\bfnm{H.}\binits{H.}}
(\byear{1975}).
\btitle{Interface sharpness in the {I}sing system}.
\bjournal{Comm. Math. Phys.}
\bvolume{40}
\bpages{1--6}.
\bid{doi={10.1007/BF01614092}, issn={0010-3616}, mr={1552609}}
\bptok{imsref}%
\end{barticle}
\endbibitem

\bibitem{vanBeijeren2}
\begin{barticle}[auto:STB|2013/10/14|10:36:11]
\bauthor{\bparticle{van} \bsnm{Beijeren},~\bfnm{H.}\binits{H.}}
(\byear{1977}).
\btitle{Exactly solvable model for the roughening transition of a crystal
 surface}.
\bjournal{Phys. Rev. Lett.}
\bvolume{38}
\bpages{993--996}.
\bptok{imsref}%
\end{barticle}
\endbibitem

\bibitem{Vel2}
\begin{barticle}[mr]
\bauthor{\bsnm{Velenik},~\bfnm{Y.}\binits{Y.}}
(\byear{2004}).
\btitle{Entropic repulsion of an interface in an external field}.
\bjournal{Probab. Theory Related Fields}
\bvolume{129}
\bpages{83--112}.
\bid{doi={10.1007/s00440-003-0328-5}, issn={0178-8051}, mr={2052864}}
\bptok{imsref}%
\end{barticle}
\endbibitem

\bibitem{Abraham}
\begin{bincollection}[author]
\bauthor{\bsnm{Abraham},~\bfnm{D.~B.}\binits{D.~B.}}
(\byear{1986}).
\btitle{Structure and phase transitions in surfaces: A review}.
In \bbooktitle{Phase Transitions and Critical Phenomena 10}
(\beditor{\binits{C.}\bfnm{C.}~\bsnm{Domb}},
\beditor{\binits{P.}\bfnm{P.}~\bsnm{Imkeller}}
\AND
\beditor{\binits{J. L.}\bfnm{J. L.}~\bsnm{Lebowitz}}, eds.)
\bpages{2--74}.
\bpublisher{Academic Press}, \baddress{San Diego}.
\bptok{imsref}\end{bincollection}

\bibitem{Vel1}
\begin{barticle}[mr]
\bauthor{\bsnm{Velenik},~\bfnm{Yvan}\binits{Y.}}
(\byear{2006}).
\btitle{Localization and delocalization of random interfaces}.
\bjournal{Probab. Surv.}
\bvolume{3}
\bpages{112--169}.
\bid{doi={10.1214/154957806000000050}, issn={1549-5787}, mr={2216964}}
\bptok{imsref}\end{barticle}
\endbibitem
\end{thebibliography}
\end{document}